\title{State-based nested iteration solution \\ 
         of optimal control problems with PDE constraints\footnote{This paper
           is dedicated to our friend and colleague Arnd R\"osch in
         occasion of his 60th birthday.}}
\author{Ulrich~Langer\footnote{Institute of Computational Mathematics,
    Johannes Kepler University Linz, Altenberger Stra{\ss}e 69, 4040 Linz,
    Austria, Email: ulanger@numa.uni-linz.ac.at},
  \;
  Richard~L\"oscher\footnote{Institut f\"{u}r Angewandte Mathematik,
    Technische Universit\"{a}t Graz, Steyrergasse 30, 8010 Graz, Austria,
    Email: loescher@math.tugraz.at}, 
  \; Olaf~Steinbach\footnote{Institut f\"{u}r Angewandte Mathematik,
    Technische Universit\"{a}t Graz, Steyrergasse 30, 8010 Graz, Austria,
    Email: o.steinbach@tugraz.at}, 
  \; Huidong~Yang\footnote{Faculty of Mathematics, University of Vienna,
  and 
  Doppler Laboratory for Mathematical Modeling and Simulation of Next
  Generations of Ultrasound Devices (MaMSi),
  Oskar--Morgenstern--Platz 1, A-1090 Wien, Austria, Email: huidong.yang@univie.ac.at}
}  
\date{}
\newcommand{\norm}[1]{\|#1\|}
\newcommand{\skpr}[1]{\langle#1\rangle}
\pgfplotsset{compat=1.17}
\newtheorem{theorem}{Theorem}
\newtheorem{lemma}{Lemma}
\newtheorem{cor}{Corollary}
\newtheorem{remark}{Remark}
\numberwithin{equation}{section} 
\begin{document}

\maketitle

\begin{abstract}
  We consider an abstract framework for the numerical solution of optimal
  control problems (OCPs) subject to partial differential equations (PDEs). 
  Examples include not only the distributed control of elliptic PDEs such
  as the Poisson equation discussed in this paper in detail but also
  parabolic and hyperbolic equations. The approach covers the standard
  $L^2$ setting as well as the more recent energy regularization, also
  including state and control constraints. We discretize OCPs subject to
  parabolic or hyperbolic PDEs by means of space-time finite elements
  similar as in the elliptic case. We discuss regularization and finite
  element error estimates, and derive an optimal relation between the
  regularization parameter and the finite element mesh size in order to
  balance the accuracy, and the energy costs for the corresponding control.
  Finally, we also discuss the efficient solution of the resulting systems
  of algebraic equations, and their use in a state-based nested iteration
  procedure that allows us to compute finite element approximations to the
  state and the control in asymptotically optimal complexity.
  The numerical results illustrate the theoretical findings quantitatively.
\end{abstract} 

\noindent
\begin{keywords} 
PDE constrained optimal control problems, 
finite element method, 
error estimates, solvers, nested iteration
\end{keywords}

\medskip

\noindent
\begin{msc}
49J20,  
49M05,  
35J05,  
65M60,  
65N22,   
65F10 
\end{msc}

\section{Introduction, motivation, and preliminaries}
\label{Section:Introduction}
Since Lions' pioneering monograph \cite{Lions:1968Book} on the optimal
control of systems described by partial differential equations (PDEs) of
elliptic, parabolic, or hyperbolic types, the investigation of optimal
control problems (OCPs) for PDEs and their numerical solution have
developed into a well-established research field in Applied Mathematics
with many applications in different areas in science and engineering.
Since then the development of the mathematical analysis on OCPs for PDEs
is documented by a huge number of publications. We here only refer the
reader to the books \cite{BorziSchulz2011Book,DeLosReyes:2015Book,
  HinzePinnauUlbrichUlbrich:2009Book,Troeltzsch:2010Book}, 
the collections \cite{HarbirKouriLacassRidzal2018Collection,
  HerzogHeinkenschlossKaliseStadlerTrelat:2022Proceedings,
  LeugeringEngellGriewankEtAl2012Collection}, 
and the survey paper \cite{BorziSchulz:2009SIAMreview}. Tracking-type
OCPs for PDEs can be posed as follows: Find the optimal control
$u_\varrho \in U$ and the corresponding state $y_\varrho \in Y$
minimizing the cost functional 
\begin{equation}\label{Eqn:Introduction:AbstractOCP1}
  {\mathcal{J}}(y_\varrho,u_\varrho) \, = \, \frac{1}{2} \,
  \| y_\varrho - \overline{y} \|_{H_Y}^2 + \frac{1}{2} \, \varrho \,
  \| u_\varrho \|^2_U
\end{equation}
subject to (s.t.) the state equation
\begin{equation}\label{Eqn:Introduction:StateEquation1}
B y_\varrho = u_\varrho \quad \mbox{in} \; \; U,
\end{equation}
where we are thinking about elliptic (e.g., Poisson's equation: $B=-\Delta$), 
parabolic (e.g., heat equation: $B=\partial_t-\Delta_x$), and hyperbolic
(e.g., wave equation: $B=\partial_{tt}-\Delta_x$) PDEs or systems of such PDEs 
with appropriate boundary and initial conditions. Here $\overline{y} \in H_Y$
denotes the given desired state (target) that we want to track as close as
possible, and $\varrho > 0$ is a suitable chosen regularization parameter 
that also defines the energy cost $\| u_\varrho \|^2_U$ of the control
$u_\varrho \in U$ appearing as right-hand side of the state equation
\eqref{Eqn:Introduction:StateEquation1}.
In this paper, the spaces $Y$, $U$, and $H_Y$ are Hilbert spaces 
with $Y \subset H_Y \subset Y^*$ being a Gelfand triple, and $B: Y \to U$ 
is always assumed to be an isomorphism. Of course, one can consider also
nonlinear PDEs or systems of PDEs represented then by a nonlinear operator
$B$ acting between Banach spaces in general; 
see, e.g., \cite{Troeltzsch:2010Book}. 
The practical realization of the control $u_\varrho$ sometimes requires
additional, so-called box-constraints imposed on the control, i.e., we look
for some optimal control $u_\varrho \in U_\text{\tiny ad} = K_c \subset U$ in
a non-empty, closed, and convex subset $K_c$ of $U$. Similarly, we can also
request box-constraints imposed on the state, i.e. we look for  
$y_\varrho \in Y_\text{\tiny ad} = K_s \subset Y$ in a non-empty, closed, 
and convex subset $K_s$ of $Y$. These are the main assumptions ensuring  
existence and uniqueness of an optimal solution
$(y_\varrho,u_\varrho) \in Y \times U$ of the optimal control problem
\eqref{Eqn:Introduction:AbstractOCP1}-\eqref{Eqn:Introduction:StateEquation1}
or the coresponding box-constrained problems where $U$ or $Y$ is replaced
by $U_\text{\tiny ad}$ or $Y_\text{\tiny ad}$; see, e.g.,
\cite{DeLosReyes:2015Book,Lions:1968Book,Troeltzsch:2010Book}.

One of the most simple examples of such kind of tracking-type OCPs is the 
distributed optimal control of the Poisson equation with $L^2$ regularization: 
Find  $u_\varrho \in U = L^2(\Omega)$ and  $y_\varrho \in Y = H^1_0(\Omega)$
minimizing the cost functional 
\begin{equation}\label{Eqn:Introduction:OCPwithL^2Regularization}
  {\mathcal{J}}(y_\varrho,u_\varrho) = \frac{1}{2} \,
  \| y_\varrho - \overline{y} \|_{H_Y=L^2(\Omega)}^2 + \frac{1}{2} \, \varrho \,
  \| u_\varrho \|_{U=L^2(\Omega)}^2 
\end{equation}
s.t. the Dirichlet boundary value problem for the Poisson equation
\begin{equation}\label{Eqn:Introduction:DBVP4Poisson*}
  - \Delta y_\varrho = u_\varrho \quad \mbox{in} \; \Omega, \quad
  y_\varrho = 0 \quad \mbox{on} \; \partial \Omega,
\end{equation}
where $\Omega \subset \mathbb{R}^d$ is a bounded Lipschitz domain 
with boundary $\partial \Omega$, $d=1,2,3$.
It is well known that the solution of the OCP
\eqref{Eqn:Introduction:OCPwithL^2Regularization}--\eqref{Eqn:Introduction:DBVP4Poisson*} is characterized by the gradient equation
\begin{equation}\label{Eqn:Introduction:L2:GradientEquation}
  p_\varrho + \varrho \, u_\varrho = 0 \quad \mbox{in} \; \Omega,
\end{equation}
where $p_\varrho$ solves the adjoint Dirichlet boundary value problem
\begin{equation}\label{Eqn:Introduction:L2:AdjointProblem}
  - \Delta p_\varrho = y_\varrho - \overline{y} \quad
  \mbox{in} \; \Omega, \quad
  p_\varrho = 0 \quad \mbox{on} \; \partial \Omega .
\end{equation}
When inserting $u_\varrho = - \Delta y_\varrho$ into the gradient equation
\eqref{Eqn:Introduction:L2:GradientEquation}, we get
$p_\varrho = \varrho \, \Delta y_\varrho$. Hence, we have to solve the
BiLaplace equation
\begin{equation}\label{Eqn:Introduction:L2:BiLaplace}
  \varrho \, \Delta^2 y_\varrho + y_\varrho = \overline{y}
  \quad \mbox{in} \; \Omega, \quad
  y_\varrho = \Delta y_\varrho =0 \quad
  \mbox{on} \; \partial \Omega .
\end{equation}
Since the operator $B = - \Delta$ is an isomorphism from 
$H^1_0(\Omega,\Delta) = \{y \in H_0^1(\Omega): \Delta u \in L^2(\Omega)\}$
onto $U=L^2(\Omega)$, the natural choice for the state space would be 
$Y=H^1_0(\Omega,\Delta)$ rather than $Y=H_0^1(\Omega)$ in the case of
$L^2$ regularization. On the other hand, if we choose $Y=H_0^1(\Omega)$
as state space, then we can permit controls from $U = H^{-1}(\Omega)$,
and now $B: H_0^1(\Omega) \rightarrow H^{-1}(\Omega)$ is an isomorphism as well.
Therefore, instead of using the $L^2$ regularization
$\| u_\varrho \|_{L^2(\Omega)}^2$ in
\eqref{Eqn:Introduction:OCPwithL^2Regularization},
we may also consider the energy
regularization $\| u_\varrho \|^2_{H^{-1}(\Omega)}$ to minimize
\begin{equation}\label{Eqn:Introduction:EnergyCostFunctional}
  {\mathcal{J}}(y_\varrho, u_\varrho) =
  \frac{1}{2} \int_\Omega [y_\varrho(x)-\overline{y}(x)]^2 \, dx +
  \frac{1}{2} \, \varrho \, \| u_\varrho \|^2_{H^{-1}(\Omega)}
\end{equation}
subject to the Poisson equation \eqref{Eqn:Introduction:DBVP4Poisson*}.
By duality we have
$ \| u_\varrho \|_{H^{-1}(\Omega)} = \| \nabla y_\varrho \|_{L^2(\Omega)}$, and
hence we can write \eqref{Eqn:Introduction:EnergyCostFunctional} as the
reduced cost functional
\begin{equation}\label{Eqn:Introduction:ReducedEnergyCostFunctional}
  \widetilde{\mathcal{J}}(y_\varrho) =
  \frac{1}{2} \int_\Omega [y_\varrho(x)-\overline{y}(x)]^2 \, dx +
  \frac{1}{2} \, \varrho \, \int_\Omega |\nabla y_\varrho(x)|^2 \, dx,
\end{equation}
whose minimizer is given as the unique solution of the gradient equation
\begin{equation}\label{Eqn:Introduction:EnergyGradient}
  - \varrho \, \Delta y_\varrho + y_\varrho = \overline{y}
  \quad \mbox{in} \; \Omega, \quad y_\varrho = 0 \quad
  \mbox{on} \; \partial \Omega .
\end{equation}
To underline the differences in considering the optimal control problems 
\eqref{Eqn:Introduction:OCPwithL^2Regularization} and
\eqref{Eqn:Introduction:EnergyCostFunctional} subject to
\eqref{Eqn:Introduction:DBVP4Poisson*}, i.e.,
when measuring the control $u_\varrho$ either in $L^2(\Omega)$ or in
$H^{-1}(\Omega)$, let us consider three simple examples. Therefore, we will
study three different target functions $\overline{y}$ on the
one-dimensional ($d=1$) domain $\Omega = (0,1)$, 
namely the regular target
\begin{equation}\label{Eqn:Introduction:target-u1}
  \overline{y}_1(x) = 4x(1-x) \quad \mbox{for} \; x \in (0,1)
\end{equation}
with $\overline{y}_1 \in H^1_0(\Omega) \cap H^2(0,1)$, 
the piecewise linear target
\begin{equation}\label{Eqn:Introduction:target-u2}
  \overline{y}_2(x) =\begin{cases}
    1,& x=0.5,\\
    0,& x\in (0,0.25)\cup (0.75,1),\\
    \text{piecewise linear}, & \text{else}, 
  \end{cases}
\end{equation}
belonging to $H^1_0(\Omega)\cap H^s(\Omega)$ for all $s<1.5$,
and the discontinuous target
\begin{equation}\label{Eqn:Introduction:target-u3}
  \overline{y}_3(x) = \begin{cases}
    1,& x\in (0.25,0.75),\\
    0,& \text{else},
  \end{cases}
\end{equation}
with $\overline{y}_3 \in H^s(\Omega)$, $s<0.5$. In all of these cases, we can
solve both the gradient equation \eqref{Eqn:Introduction:EnergyGradient} and
the BiLaplace equation \eqref{Eqn:Introduction:L2:BiLaplace} analytically in
order to compute the state functions $y_{i,\varrho}$, $i=1,2,3$
for different values of the relaxation or cost parameter $\varrho$, see
Fig.~\ref{Fig:Introduction:states-1D}. There we also plot the
errors $\| y_{i,\varrho} - \overline{y}_i \|_{L^2(\Omega)}$ for both
regularization norms as a function of the regularization parameter
$\varrho$. We observe that in all cases the error
$\| y_{i,\varrho} - \overline{y}_i \|_{L^2(\Omega)}$ is smaller when using
energy regularization in $H^{-1}(\Omega)$ instead of using the
regularization in $L^2(\Omega)$. In fact, the errors coincide when
considering $\varrho_{L^2} = \varrho_{H^{-1}}^2$. Note that the related
regularization error estimates were already shown in
\cite{NeumuellerSteinbach:2021M2AS}.

\begin{figure}[h]
  \centering
  \begin{subfigure}{0.48\textwidth}
    \centering
    \includegraphics[width=\textwidth]{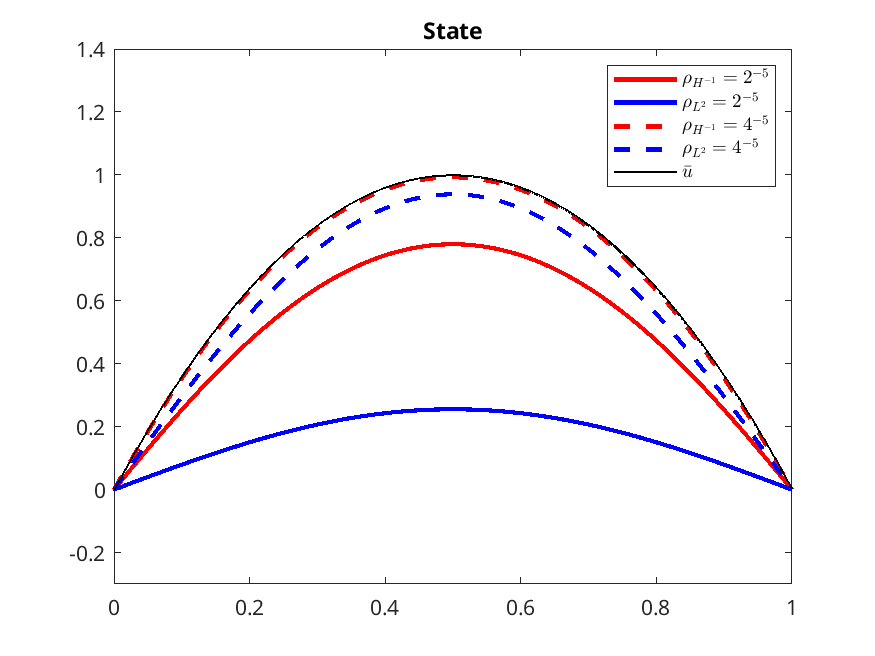}
    \caption{Smooth target \\
      \phantom{(a)} $\overline{y}_1 \in H^1_0(0,1) \cap H^2(0,1)$.}
  \end{subfigure}
  \hfill
  \begin{subfigure}{0.48\textwidth}
    \centering
    \includegraphics[width=\textwidth]{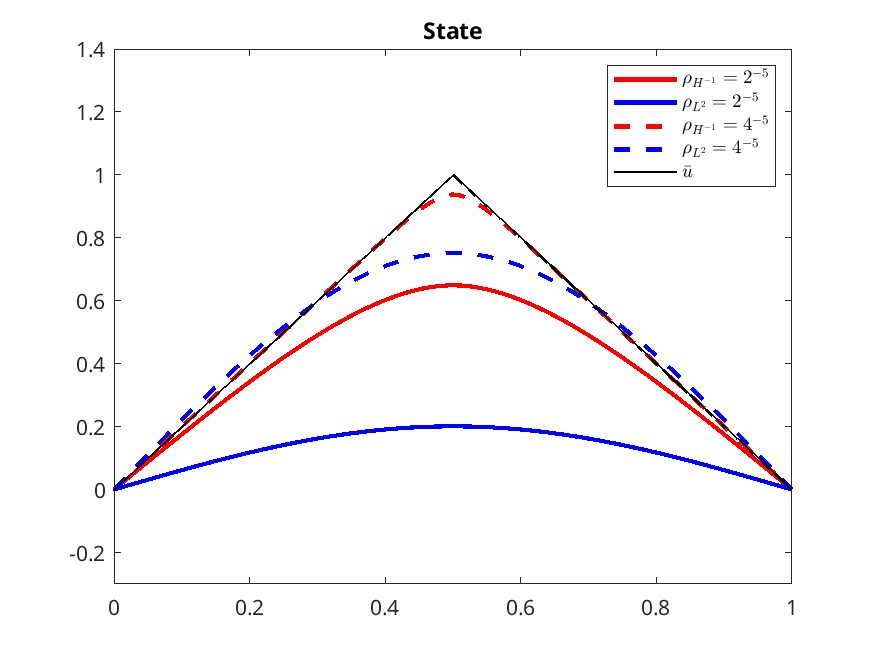}
    \caption{Piecewise linear target \\ 
      \phantom{(b)} $\overline{y}_2  \in H^1_0(0,1) \cap
      H^s(0,1)$, $s < 1.5$.}
  \end{subfigure}
  \\
  \begin{subfigure}[t]{0.48\textwidth}
    \centering
    \includegraphics[width=\textwidth]{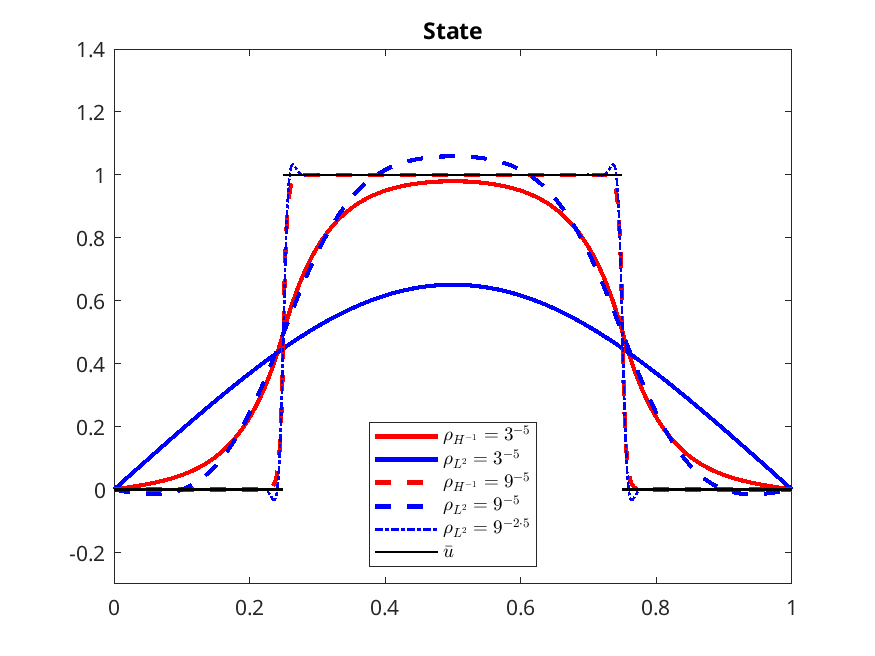}
    \caption{Discontinuous target \\
      \phantom{(c)} $\overline{y}_3 \in H^s(0,1)$, $s<0.5$.}
  \end{subfigure}
  \hfill
  \begin{subfigure}[t]{0.48\textwidth}
    \centering
    \begin{tikzpicture}[scale=0.45]
      \begin{axis}[
        xmode = log,
        ymode = log,
        xlabel=$\varrho$,
        ylabel=$\| y_{i,\varrho}-\overline{y}_i \|_{L^2(\Omega)}$,
        legend style={font=\tiny}, legend pos = outer north east]
        \addplot[mark = o,red] table [col sep=
        &, y=errorex_e, x=rho]{tables/tab_u1L2ande_rho.dat};
        \addlegendentry{$\|y_{1,\varrho_{H^{-1}}}-\overline y_1 \|_{L^2(\Omega)}$}
        \addplot[mark = o,blue] table [col sep=
        &, y=errorex_L2, x=rho]{tables/tab_u1L2ande_rho.dat};
        \addlegendentry{$\|y_{1,\varrho_{L^2}}-\overline{y}_1 \|_{L^2(\Omega)}$}
        \addplot[mark = square,red] table [col sep=
        &, y=errorex_e, x=rho]{tables/tab_u2L2ande_rho.dat};
        \addlegendentry{$\|y_{2,\varrho_{H^{-1}}}-\overline{y}_2 \|_{L^2(\Omega)}$}
        \addplot[mark = square,blue] table [col sep=
        &, y=errorex_L2, x=rho]{tables/tab_u2L2ande_rho.dat};
        \addlegendentry{$\|y_{2,\varrho_{L^2}}-\overline{y}_2 \|_{L^2(\Omega)}$}
        \addplot[mark = square*,red] table [col sep=
        &, y=errorex_e, x=rho]{tables/tab_u3L2ande_rho.dat};
        \addlegendentry{$\|y_{3,\varrho_{H^{-1}}}-\overline{y}_3 \|_{L^2(\Omega)}$}
        \addplot[mark = square*,blue] table [col sep=
        &, y=errorex_L2, x=rho]{tables/tab_u3L2ande_rho.dat};
        \addlegendentry{$\|y_{3,\varrho_{L^2}}-\overline{y}_3 \|_{L^2(\Omega)}$}
        \addplot[
        domain = 2^(-18):2^(-15),
        samples = 4,
        dashed,
        thin,
        red,
        mark = o,] {5*x};
        \addlegendentry{$\varrho$}
        \addplot[
        domain = 2^(-18):2^(-15),
        samples = 4,
        dashed,
        thin,
        red,
        mark = square,] {1.3*x^(0.75)};
        \addlegendentry{$\varrho^{1.5/2}$}
        \addplot[
        domain = 2^(-18):2^(-15),
        samples = 4,
        dashed,
        thin,
        red,
        mark = diamond*,] {1*x^(0.25)};
        \addlegendentry{$\varrho^{0.5/2}$}
        \addplot[
        domain = 2^(-18):2^(-15),
        samples = 4,
        dashed,
        thin,
        blue,
        mark = o,] {3.5*x^(2.5/4)};
        \addlegendentry{$\varrho^{2.5/4}$}
        \addplot[
        domain = 2^(-18):2^(-15),
        samples = 4,
        dashed,
        thin,
        blue,
        mark = square,] {0.8*x^(0.75/2)};
        \addlegendentry{$\varrho^{1.5/4}$}
        \addplot[
        domain = 2^(-18):2^(-15),
        samples = 4,
        dashed,
        thin,
        blue,
        mark = diamond*,] {1*x^(0.25/2)};
        \addlegendentry{$\varrho^{0.5/4}$}
      \end{axis}
    \end{tikzpicture}

    \caption{Errors $\|y_{i,\varrho}-\overline{y}_i\|_{L^2(\Omega)}$}
  \end{subfigure}
  \caption{Targets $\overline{y}_i$, state solutions $y_{i,\varrho_{L^2}}$
    and $y_{i,\varrho_{H^{-1}}}$, and errors
    $\| y_{i,\varrho} - \overline{y}_i \|_{L^2(\Omega)}$
    for different choices of regularization
    parameters and regularization norms.}
  \label{Fig:Introduction:states-1D}
\end{figure}

When the state $y_{i,\varrho}$ is known we can compute the control
$u_{i,\varrho}(x) = - y_{i,\varrho}''(x)$ as well as the costs
$\| u_{i,\varrho} \|_{L^2(\Omega)}$ 
and $\| u_{i,\varrho} \|_{H^{-1}(\Omega)} = \| y_{i,\varrho}' \|_{L^2(\Omega)}$
for both regularization norms. We observe that in the case of the smooth
target $\overline{y}_1$ all costs remain bounded for $\varrho \to 0$,
while for the discontinuous target $\overline{y}_3$ all costs tend
to infinity as $\varrho \to 0$. The situation is different for the
piecewise linear target $\overline{y}_2$ where the costs remain bounded
in the case of energy regularization, but tend to infinity as $\varrho \to 0$
when using $L^2$ regularization. As discussed later in this paper, these
observations are in complete agreement with our theoretical results.
Moreover, they show that our theoretical results are sharp.

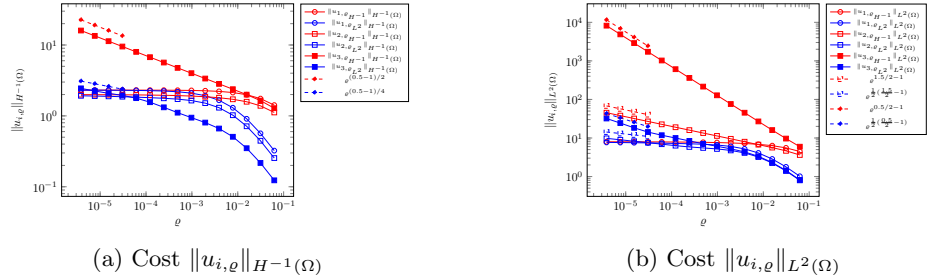
\begin{figure}[htbp!]
  \centering
  \begin{subfigure}[t]{0.45\textwidth}
    \centering
    \begin{tikzpicture}[scale=0.45]
      \begin{axis}[
        xmode = log,
        ymode = log,
        xlabel=$\varrho$,
        ylabel=$\| u_{i,\varrho}\|_{H^{-1}(\Omega)}$,
        legend style={font=\tiny}, legend pos = outer north east]
        \addplot[mark = o,red] table [col sep=
        &, y=costH1ex_e, x=rho]{tables/tab_u1L2ande_rho.dat};
        \addlegendentry{$\|u_{1,\varrho_{H^{-1}}}\|_{H^{-1}(\Omega)}$}
        \addplot[mark = o,blue] table [col sep=
        &, y=costH1ex_L2, x=rho]{tables/tab_u1L2ande_rho.dat};
        \addlegendentry{$\|u_{1,\varrho_{L^{2}}}\|_{H^{-1}(\Omega)}$}
        \addplot[mark = square,red] table [col sep=
        &, y=costH1ex_e, x=rho]{tables/tab_u2L2ande_rho.dat};
        \addlegendentry{$\|u_{2,\varrho_{H^{-1}}}\|_{H^{-1}(\Omega)}$}
        \addplot[mark = square,blue] table [col sep=
        &, y=costH1ex_L2, x=rho]{tables/tab_u2L2ande_rho.dat};
        \addlegendentry{$\|u_{2,\varrho_{L^{2}}}\|_{H^{-1}(\Omega)}$}
        \addplot[mark = square*,red] table [col sep=
        &, y=costH1ex_e, x=rho]{tables/tab_u3L2ande_rho.dat};
        \addlegendentry{$\|u_{3,\varrho_{H^{-1}}}\|_{H^{-1}(\Omega)}$}
        \addplot[mark = square*,blue] table [col sep=
        &, y=costH1ex_L2, x=rho]{tables/tab_u3L2ande_rho.dat};
        \addlegendentry{$\|u_{3,\varrho_{L^{2}}}\|_{H^{-1}(\Omega)}$}
        \addplot[
        domain = 2^(-18):2^(-15),
        samples = 4,
        dashed,
        thin,
        red,
        mark = diamond*,] {x^(-0.25)};
        \addlegendentry{$\varrho^{(0.5-1)/2}$}
        \addplot[
        domain = 2^(-18):2^(-15),
        samples = 4,
        dashed,
        thin,
        blue,
        mark = diamond*,] {0.65*x^(-0.125)};
        \addlegendentry{$\varrho^{(0.5-1)/4}$}
      \end{axis}
    \end{tikzpicture}
    \caption{Cost $\|u_{i,\varrho}\|_{H^{-1}(\Omega)}$}
  \end{subfigure}
  \hfill
  \begin{subfigure}[t]{0.45\textwidth}
    \centering
    \begin{tikzpicture}[scale=0.45]
      \begin{axis}[
        xmode = log,
        ymode = log,
        xlabel=$\varrho$,
        ylabel=$\| u_{i,\varrho}\|_{L^2(\Omega)}$,
        legend style={font=\tiny}, legend pos = outer north east]
        \addplot[mark = o,red] table [col sep=
        &, y=costL2ex_e, x=rho]{tables/tab_u1L2ande_rho.dat};
        \addlegendentry{$\|u_{1,\varrho_{H^{-1}}}\|_{L^2(\Omega)}$}
        \addplot[mark = o,blue] table [col sep=
        &, y=costL2ex_L2, x=rho]{tables/tab_u1L2ande_rho.dat};
        \addlegendentry{$\|u_{1,\varrho_{L^{2}}}\|_{L^2(\Omega)}$}
        \addplot[mark = square,red] table [col sep=
        &, y=costL2ex_e, x=rho]{tables/tab_u2L2ande_rho.dat};
        \addlegendentry{$\|u_{2,\varrho_{H^{-1}}}\|_{L^2(\Omega)}$}
        \addplot[mark = square,blue] table [col sep=
        &, y=costL2ex_L2, x=rho]{tables/tab_u2L2ande_rho.dat};
        \addlegendentry{$\|u_{2,\varrho_{L^{2}}}\|_{L^2(\Omega)}$}
        \addplot[mark = square*,red] table [col sep=
        &, y=costL2ex_e, x=rho]{tables/tab_u3L2ande_rho.dat};
        \addlegendentry{$\|u_{3,\varrho_{H^{-1}}}\|_{L^2(\Omega)}$}
        \addplot[mark = square*,blue] table [col sep=
        &, y=costL2ex_L2, x=rho]{tables/tab_u3L2ande_rho.dat};
        \addlegendentry{$\|u_{3,\varrho_{L^{2}}}\|_{L^2(\Omega)}$}	
        \addplot[
        domain = 2^(-18):2^(-15),
        samples = 4,
        dashed,
        thin,
        red,
        mark = square,] {3*x^(-0.25)};
        \addlegendentry{$\varrho^{1.5/2-1}$}
        \addplot[
        domain = 2^(-18):2^(-15),
        samples = 4,
        dashed,
        thin,
        blue,
        mark = square,] {3*x^(-0.125)};
        \addlegendentry{$\varrho^{\tfrac{1}{2}(\tfrac{1.5}{2}-1)}$}
        \addplot[
        domain = 2^(-18):2^(-15),
        samples = 4,
        dashed,
        thin,
        red,
        mark = diamond*,] {x^(-0.75)};
        \addlegendentry{$\varrho^{0.5/2-1}$}
        \addplot[
        domain = 2^(-18):2^(-15),
        samples = 4,
        dashed,
        thin,
        blue,
        mark = diamond*,] {0.4*x^(-0.375)};
        \addlegendentry{$\varrho^{\tfrac{1}{2}(\tfrac{0.5}{2}-1)}$}
      \end{axis}
    \end{tikzpicture}
    \caption{Cost $\|u_{i,\varrho}\|_{L^2(\Omega)}$}
  \end{subfigure}
  \caption{Error and different types of 
          regularization with $\varrho = \varrho_{H^{-1}}=\varrho_{L^2}$. }
        \label{Fig:Introduction:error-and-cost}
\end{figure}

The finite element discretization of OPCs such as
\eqref{Eqn:Introduction:AbstractOCP1}--\eqref{Eqn:Introduction:StateEquation1}
in general, or 
\eqref{Eqn:Introduction:OCPwithL^2Regularization}--\eqref{Eqn:Introduction:DBVP4Poisson*}
and \eqref{Eqn:Introduction:EnergyCostFunctional}--\eqref{Eqn:Introduction:DBVP4Poisson*} in particular,
is usually investigated for fixed $\varrho > 0$, and discretization error
estimates are provided for the finite element errors
$\| y_\varrho - y_{\varrho h}\|$ and $\|u_\varrho - u_{\varrho h}\|$ in
different norms; see, e.g.,
\cite{  LangerSteinbachTroeltzschYang:2021SINUM,
  LangerSteinbachTroeltzschYang:2021SISC,
  MeyerRoesch2004SICON,
  Roesch2004ZAA,
  RoeschSimon2007NFAO}
However, as shown above, the distance of the computed finite element state
$y_{\varrho h}$ from the desired state $\overline{y}$ is basically given by the 
distance $\|y_\varrho - \overline{y}\|$ that is fixed for fixed $\varrho$.
Similarly, the computed costs $\|u_{\varrho h}\|$ approximate the 
cost $\|u_\varrho\|$ that is also fixed for fixed $\varrho$. If we want to
improve the distance $\|y_{\varrho h} - \overline{y}\|$, then we have to
diminish $\varrho$ that leads to higher cost $\|u_{\varrho h}\|$. On the other
side, if we want to reduce the cost, then we have to enlarge $\varrho$ that
yields a larger distance $\|y_{\varrho h} - \overline{y}\|$. A careful
analysis of the discretization error $\|y_{\varrho h} - \overline{y}\|$ 
in terms of $\varrho$ and $h$ shows that we have to relate the
regularization parameter $\varrho$ to the mesh-size $h$ in order to get an
asymptotically optimal convergence of $y_{\varrho h}$ to $\overline{y}$ in
balance with the energy cost for the control. It is shown in   
\cite{LangerLoescherSteinbachYang:2023CMAM,LangerLoescherSteinbachYang:2024NLA}
that we should choose $\varrho = h^2$ and $\varrho = h^4$
for OCPs
\eqref{Eqn:Introduction:OCPwithL^2Regularization}--\eqref{Eqn:Introduction:DBVP4Poisson*}
with $L^2$ regularization and 
\eqref{Eqn:Introduction:EnergyCostFunctional}--\eqref{Eqn:Introduction:DBVP4Poisson*} 
with $H^{-1}$ regularization, respectively.
The same choices as in elliptic OPCs hold for parabolic OPCs
\cite{LangerSteinbachYang:2024ACOM} and hyperbolic OPCs
\cite{LoescherSteinbach:2022SINUM,LangerLoescherSteinbachYang:2025CAMWA} 
when using space-time finite element discretizations. If we permit functions
as regularization, then $\varrho$ can be adapted to the 
local mesh-size that can heavily vary over the computational domain 
in the case of an adaptive mesh refinement; see
\cite{LangerLoescherSteinbachYang:2024CAMWA}. In this connection, we also
refer to the very recent paper \cite{PowerPryer2025arXiv2503.11386} 
where  both the regularization parameter and the mesh-size are dynamically
adjusted locally on the basis of a posteriori error estimates.
The incorporation of box constraints imposed on the state or the control
finally leads to state-based variational inequalities of first kind which
can be again discretised by (space-time) finite elements; see
\cite{GanglLoescherSteinbach2025CAMWA} for the elliptic OPC
\eqref{Eqn:Introduction:EnergyCostFunctional}--\eqref{Eqn:Introduction:DBVP4Poisson*} with state or control constraints. As in the unconstrained case, 
the choice $\varrho = h^2$ again gives asymptotically optimal convergence.
For constant $\varrho > 0$, we refer to the recent papers
\cite{BrennerGedickeSung:2023CMAM,GongTan2025JSC,GudiShakya2025JCAM}
and the references therein.

An approximate  solution to OCPs such as
\eqref{Eqn:Introduction:AbstractOCP1}--\eqref{Eqn:Introduction:StateEquation1}
can be found via the finite element discretization of the optimality system
consisting of the state equation, the adjoint equation, and the gradient
equation for defining the optimal state $y_\varrho$, the optimal adjoint
(co-state) $p_\varrho$, and the optimal control $u_\varrho$; 
cf. \eqref{Eqn:Introduction:DBVP4Poisson*}--\eqref{Eqn:Introduction:L2:AdjointProblem} and \eqref{Eqn:Introduction:L2:GradientEquation}
for the simple elliptic OCP
\eqref{Eqn:Introduction:EnergyCostFunctional}--\eqref{Eqn:Introduction:DBVP4Poisson*} 
with $L^2$ regularization. The finite element discretization of the OCP 
\eqref{Eqn:Introduction:AbstractOCP1}--\eqref{Eqn:Introduction:StateEquation1}
finally leads to a linear symmetric, but indefinite linear system of
algebraic equations (saddle point system) for defining finite element vectors 
$\mathbf{y}_{\varrho h}, \mathbf{p}_{\varrho h}, \mathbf{u}_{\varrho h}$ 
related to the finite element approximations 
$y_{\varrho h}, p_{\varrho h}, u_{\varrho h}$ to $y_\varrho, p_\varrho, u_\varrho$
via the finite element isomorphism. This $3 \times 3$ block system can
be reduced to the $2 \times 2$ block system
        \begin{equation}
        \label{Eqn:Introduction:sid-2x2-block-system}
        \begin{bmatrix}
            \varrho^{-1} \mathbf{A}_h & \mathbf{B}_h\\
            \mathbf{B}^\top_h             & - \mathbf{M}_h
        \end{bmatrix}
        \begin{bmatrix}
            \mathbf{p}_{\varrho h}\\
            \mathbf{y}_{\varrho h}
        \end{bmatrix}
        =
        \begin{bmatrix}
            \mathbf{0}_h\\
            -\mathbf{\overline{y}}_h
        \end{bmatrix}
        \end{equation}
by eliminating the control $\mathbf{u}_{\varrho h}$. Here,
the matrix $\mathbf{B}_h$ arises from the finite element discretization of $B$,
$\mathbf{M}_h$ denotes the mass matrix, and $\mathbf{A}_h$ represents 
the regularisation term. Since we have to vary $\varrho$ in order to adapt
the accuracy of the approximation of the target $\overline{y}$ and the
energy cost for the control $u_\varrho$ to the practical requirements and
to our budget, we would like to have not only a solver for the symmetric
and indefinite system  \eqref{Eqn:Introduction:sid-2x2-block-system} 
or for the original $3 \times 3$ block system that runs in asymptotically
optimal complexity in terms of $h$ but also a solver that is robust in
$\varrho$. Such kind of robust iterative solvers were proposed and analysed in,
e.g.,
\cite{SchoeberlSimonZulehner2011SINUM,SchulzWittum2008CVS,Zulehner:2011SIMAX}.
Eliminating the adjoint $\mathbf{p}_{\varrho h}$ from
\eqref{Eqn:Introduction:sid-2x2-block-system}, we arrive at the symmetric
and positive definite (spd), state-based Schur-complement system 
\begin{equation}\label{Eqn:Introduction:Sy=rhs}
 \mathbf{S}_{\varrho h} \mathbf{y}_{\varrho h} = \mathbf{\overline{y}}_h,
\end{equation}
where $\mathbf{S}_{\varrho h} = \mathbf{M}_h + \varrho \mathbf{D}_h$ with
$\mathbf{D}_h = \mathbf{B}_h^T\mathbf{A}_h^{-1}\mathbf{B}_h$.
The spd Schur-complement system can efficiently be solved by means of
the preconditioned conjugate gradient (pcg) method 
provided that a robust preconditioner $\mathbf{C}_h$ is available, and 
the matrix-by-vector multiplication 
$\mathbf{D}_h * \mathbf{y}_{\varrho h}^k$,  which contains the application of
$\mathbf{A}_h^{-1}$, can be performed efficiently.
Surprisingly, for the optimal choice of the regularisation $\varrho$, the 
Schur-complement $\mathbf{S}_{\varrho h}$ is always spectrally equivalent 
to the mass matrix $\mathbf{M}_h$ and, therefore, to some diagonal 
approximation of $\mathbf{M}_h$ such as the lumped mass matrix
$\mbox{lump}(\mathbf{M}_h)$. So, the lumped mass matrix
$\mbox{lump}(\mathbf{M}_h)$ can serve as robust preconditioner
$\mathbf{C}_h$. This result is not only true for the elliptic case
\cite{LangerLoescherSteinbachYang:2023CMAM,LangerLoescherSteinbachYang:2024NLA}
but also for parabolic \cite{LangerSteinbachYang:2024ACOM} 
and hyperbolic \cite{LoescherSteinbach:2022SINUM} OCPs as well as for 
variable regularizations $\varrho$ locally adapted to the mesh-size.
It turns out that, for the elliptic OCP
\eqref{Eqn:Introduction:EnergyCostFunctional}--\eqref{Eqn:Introduction:DBVP4Poisson*} with $H^{-1}$ regularization,
$\mathbf{B}_h = \mathbf{A}_h = \mathbf{D}_h = \mathbf{K}_h$, and, therefore, 
$\mathbf{S}_{\varrho h} = \mathbf{M}_h + \varrho \mathbf{K}_h$,
where $\mathbf{K}_h$ is the spd finite element Laplacian stiffness matrix.
So, a fast multiplication is ensured. Similarly, for the elliptic OCP
\eqref{Eqn:Introduction:OCPwithL^2Regularization}--\eqref{Eqn:Introduction:DBVP4Poisson*}
with $L^2$ regularization, we have
$\mathbf{S}_{\varrho h} = \mathbf{M}_h + \varrho \mathbf{K}_h
\mathbf{M}_h^{-1} \mathbf{K}_h$. Here we can replace
$\mathbf{M}_h$ by $\mbox{lump}(\mathbf{M}_h)$ without affecting the
asymptotic behavior of the discretization error 
\cite{LangerLoescherSteinbachYang:2024NLA}, and a fast multiplication is
again  ensured. In general, we have to use inner iterations for
approximating the application of $\mathbf{A}_h^{-1}$.
Now, the pcg with the preconditioner $\mathbf{C}_h = \mbox{lump}(\mathbf{M}_h)$ 
can be used as nested solver in a nested iteration process on a sequence of 
uniformly or adaptively refined meshes starting with some coarse mesh 
and stopping the nested iteration as soon as a prescribed accuracy 
for the approximation of the given desired state $\overline{y}$ is
achieved without exceeding a given budget for the energy cost of the control.
The reconstruction of the control from the computed state is an integral part 
of the nested iteration process. This allows us to solve OCPs such as 
\eqref{Eqn:Introduction:AbstractOCP1}--\eqref{Eqn:Introduction:StateEquation1}
always in optimal, or, at least, almost optimal complexity.
In the case of OCPs with state or control constraints, we have to solve 
variational inequalities of first kind in the state-based formulation.
After the finite element discretization, these variational inequalities 
are living in the finite element state space, and can be reformulated 
as non-differentiable non-linear systems of algebraic equations for determining 
the nodal solution vector $\mathbf{y}_{\varrho h}$ corresponding to 
the finite element state solution $y_{\varrho h}$.
This non-linear system can by solved by the semi-smooth Newton method
that is nothing but the primal-dual active set method
\cite{HintermuellerItoKunisch2003SJO}. Alternatively, we can use
multigrid methods for variational inequalities arising, e.g.,
from obstacle problems \cite{HackbuschMittelmann1983NumerMath};
see also the overview article  \cite{GraeserKornhuber2009JCM}.
For $\varrho$ robust solvers of control or state constraint OCPs,
we also refer to 
\cite{AxelssonNeytchevaStroem2018JNM,DravinsNeytcheva2023Calcolo,
StollWathen:2012NLA} and the references therein.

The rest of the paper is organized as follows.
Section~\ref{Section:AbstractOCP} presents a theoretical framework 
for the analysis and numerical analysis of OCPs of the form
\eqref{Eqn:Introduction:AbstractOCP1}--\eqref{Eqn:Introduction:StateEquation1}
including regularization error estimates
(Subsection~\ref{Subsection:AbstractOCP:Setting}),
Galerkin discretization and error estimates
(Subsection~\ref{Subsection:AbstractOCP:Disrectization}),
recovering of the control from the computed Galerkin approximation to the
state in a simple postprocessing procedure
(Subsection~\ref{Subsection:AbstractOCP:ControlRecovering}), solvers and
their use in nested iteration with accuracy and cost control
(Subsection~\ref{Subsection:AbstractOCP:Solver}), 
the handling of additional functional box-constraints for the state and
the control (Subsection~\ref{Subsection:AbstractOCP:Constraints}).
The application of this abstract framework to the distributed control of 
Poisson's equation \eqref{Eqn:Introduction:OCPwithL^2Regularization}--\eqref{Eqn:Introduction:DBVP4Poisson*}
is presented in Section~\ref{Section:Application}, 
where we also discuss our numerical results.
The application to the distributed control of Poisson's equation deliver
the blueprint  for other applications such as discussed in
Section~\ref{Section:OtherApplications}.
In Section~\ref{Section:ConclusionOutlook}, we draw some conclusions,
and give an outlook on further research directions in connection
with our approach.

\section{Abstract optimal control problems}
\label{Section:AbstractOCP}

\subsection{Abstract setting and regularization error estimates}
\label{Subsection:AbstractOCP:Setting}
Let $X \subset H_X \subset X^*$ and $Y \subset H_Y \subset Y^*$ be Gelfand
triples of Hilbert spaces, where $X^*$ and $Y^*$ are the duals of $X$ and
$Y$ with respect to $H_X$ and $H_Y$, respectively. We assume that
$H_X$ and $H_Y$ are Hilbert spaces with the inner products
$\langle \cdot , \cdot \rangle_{H_X}$ and $\langle \cdot, \cdot \rangle_{H_Y}$,
respectively. Moreover, the duality pairings
$\langle \cdot , \cdot \rangle_{X^*,X}$ and $\langle \cdot , \cdot
\rangle_{Y^*,Y}$ are defined as extension of the inner products in $H_X$, and
in $H_Y$, respectively.

Let $ B : Y \to X^*$ be a bounded, linear operator which is assumed to
satisfy an inf-sup condition, i.e., for all $y \in Y$, we have
\[
  \| B y \|_{X^*} \leq c_2^B \, \| y \|_Y, \quad
  \sup\limits_{0 \neq x \in X}
  \frac{\langle B y , x \rangle_{X^*,X}}{\| x \|_X} \geq c_1^B \, \| y \|_Y
  \, .
\]
In addition we assume that $B$ is surjective.
Hence,
$B : Y \to X^*$ defines an isomorphism. By
\[
  \langle By , x \rangle_{X^*,X} =: \langle y , B^* x \rangle_{Y,Y^*}
  \quad \mbox{for all} \; (y,x) \in Y \times X
\]
we define the adjoint operator $B^* : X \to Y^*$.
For optimal control problems in which we are interested, $B$ 
results from boundary value or initial-boundary value problems 
for PDEs or systems of PDEs.
$Y$ is the state space with the norm $\| \cdot \|_Y$, 
while $U = X^*$ denotes the control space with norm $\| \cdot \|_{X^*}$
which describes the cost of the control. In order to define an equivalent
norm in $X^*$ we consider a linear self-adjoint and elliptic
operator $A : X \to X^*$ satisfying
\[
  \| A x \|_{X^*} \leq c_2^A \, \| x \|_X, \quad
  \langle A x , x \rangle_{X^*,X} \geq c_1^A \, \| x \|_X^2
  \quad \mbox{for all} \; x \in X .
\]
With this we define
$\| x \|_A = \sqrt{\langle A x,x \rangle_{X^*,X}}$ and
$\| u \|_{A^{-1}} = \sqrt{\langle A^{-1} u , u \rangle_{X,X^*}}$
which are equivalent norms in $X$ and $X^*$, respectively.

We first consider an abstract tracking type optimal control problem with
neither state nor control constraints to find the minimizer
$(y_\varrho, u_\varrho) \in Y \times U$ of the functional
\begin{equation}\label{Eqn:AbstractOCP:AbstractOCP}
  {\mathcal{J}}(y_\varrho,u_\varrho) = \frac{1}{2} \,
  \| y_\varrho - \overline{y} \|_{H_Y}^2 + \frac{1}{2} \, \varrho \,
  \| u_\varrho \|^2_{A^{-1}} \quad
  \mbox{s.t.} 
  \; B y_\varrho = u_\varrho \; \mbox{in} \; U=X^* ,
\end{equation}
where $\varrho > 0$ is the cost or regularization parameter
on which the solution depends, and $\overline{y} \in H_Y$ denotes the 
given target or desired state.

In the standard approach we consider the solution of the constraint equation
$B y_\varrho = u_\varrho$ which defines the control-to-state map
$y_\varrho = B^{-1}u_\varrho$. With this we can write the reduced
cost functional as
\[
  \widehat{J}(u_\varrho) =
  \frac{1}{2} \, \| B^{-1} u_\varrho - \overline{y} \|^2_{H_Y}
  + \frac{1}{2} \, \varrho \, \| u_\varrho \|^2_{A^{-1}},
\]
and its minimizer $u_\varrho \in U=X^*$ is given as the unique solution of the
gradient equation
\[
  B^{-1,*} (B^{-1} u_\varrho - \overline{y}) +
  \varrho \, A^{-1} u_\varrho \, = \, 0 \quad \mbox{in} \; X  .
\]
Since $B : Y \to X^*$ is an isomorphism, we can write the reduced cost
functional as
\begin{equation}\label{Eqn:AbstractOCP:Abstract reduced cost functional}
  \widetilde{\mathcal{J}}(y_\varrho) = \frac{1}{2} \,
  \| y_\varrho - \overline{y} \|_{H_Y}^2 + \frac{1}{2} \, \varrho \,
  \| B y_\varrho \|^2_{A^{-1}},
\end{equation}
and its minimizer $y_\varrho \in Y$ is given as the unique solution of
the gradient equation
\begin{equation}\label{Eqn:AbstractOCP:Abstract gradient equation}
  y_\varrho + \varrho \, B^* A^{-1} B y_\varrho = \overline{y} \quad
  \mbox{in} \; Y^* .
\end{equation}
The linear operator $S := B^* A^{-1} B : Y \to Y^*$ is self-adjoint
and elliptic, i.e.,
\[
  \langle S y , y \rangle_{Y^*,Y} \geq c_1^S \, \| y \|_Y^2 \quad
  \text{and} \quad
  \| S y \|_{Y^*} \leq c_2^S \, \| y \|_Y \quad \mbox{for all} \;
  y \in Y, 
\]
with $c_1^S = c_1^A (c_1^B / c_2^A)^2$ and $c_2^S = (c_2^B)^2 / c_1^A$;
see, e.g., \cite[Lemma 1]{LangerSteinbachYang:2024ACOM}.
We note that, for $By_\varrho=u_\varrho$, we have
\begin{equation}\label{Eqn:AbstractOCP:Norm equivalence}
  \| y_\varrho \|_S^2 = \langle S y_\varrho , y_\varrho \rangle_{Y^*,Y} =
  \langle A^{-1} B y_\varrho , B y_\varrho \rangle_{X,X^*} =
  \langle A^{-1} u_\varrho , u_\varrho \rangle_{X,X^*} =
  \| u_\varrho \|_{A^{-1}}^2.
\end{equation}
Moreover, for the solution $y_\varrho \in Y \subset H_Y$ of
\eqref{Eqn:AbstractOCP:Abstract gradient equation}, we get
\begin{equation}\label{Eqn:AbstractOCP:Abstract regularity Su}
S y_\varrho = \frac{1}{\varrho} \, (\overline{y}-y_\varrho) \in H_Y \, .
\end{equation}
Using \eqref{Eqn:AbstractOCP:Norm equivalence}, we can rewrite the reduced
cost functional \eqref{Eqn:AbstractOCP:Abstract reduced cost functional}
in the form
\[
  \widetilde{\mathcal{J}}(y_\varrho) = \frac{1}{2} \,
  \| y_\varrho - \overline{y} \|_{H_Y}^2 + \frac{1}{2} \, \varrho \,
  \| y_\varrho \|^2_S ,
\]
where the realization of $\| y_\varrho \|_S^2$ with $S = B^* A^{-1} B$
involves the inversion of $A$, which in general may complicate the
numerical implementation. Hence we may replace $\| y_\varrho \|_S^2$
by any equivalent but more easier computable norm
$\| y_\varrho \|_D^2 = \langle D y_\varrho , y_\varrho \rangle_{Y^*,Y}$
for some bounded and elliptic operator $D : X \to X^*$, and satisfying
the norm equivalence inequalities
\[
  c_1^D \, \| y \|_S \leq \| y \|_D \leq c_2^D \, \| y \|_S
  \quad \mbox{for all} \; y \in Y .
\]
We now minimize
\begin{equation}\label{Eqn:AbstractOCP:AbstractFunctionalD}
  \check{\mathcal{J}}(y_\varrho) = \frac{1}{2} \,
  \| y_\varrho - \overline{y} \|_{H_Y}^2 + \frac{1}{2} \, \varrho \,
  \| y_\varrho \|^2_D ,
\end{equation}
whose minimizer $y_\varrho \in Y$ is given as the unique solution satisfying
\begin{equation}\label{Eqn:AbstractOCP:Abstract VF}
  \langle y_\varrho , y \rangle_{H_Y} +
  \varrho \, \langle D y_\varrho , y  \rangle_{Y^*,Y} =
  \langle \overline{y} , y \rangle_{H_Y} \quad
  \mbox{for all} \; y \in Y .
\end{equation}
For the unique solution $y_\varrho \in Y$ of \eqref{Eqn:AbstractOCP:Abstract VF},
and depending on the regularity of the target $\overline{y}$,
we can state the following result for the regularization error
$\| y_\varrho - \overline{y} \|_{H_Y}$. 

\begin{lemma}\label{Lemma:AbstractOCP:RegularizationErrorUnconstrained}
  Let $y_\varrho \in Y$ be the unique solution of the variational formulation
  \eqref{Eqn:AbstractOCP:Abstract VF}. For $\overline{y} \in H_Y$ there holds
  \[
    \| y_\varrho \|_{H_Y} \leq \| \overline{y} \|_{H_Y}, \quad
    \| y_\varrho \|_D \leq \varrho^{-1/2} \, \| \overline{y} \|_{H_Y}, \quad
    \| y_\varrho - \overline{y} \|_{H_Y} \leq \| \overline{y} \|_{H_Y}, 
  \]
  while for $\overline{y} \in Y$ we have
  \[
    \| y_\varrho - \overline{y} \|_D \leq \| \overline{y} \|_D , \quad
    \| y_\varrho - \overline{y} \|_{H_Y} \leq \varrho^{1/2} \,
    \| \overline{y} \|_D , \quad
    \| y_\varrho \|_D \leq \| \overline{y} \|_D .
  \]
  If in addition $D \overline{y} \in H_Y$ is satisfied for
  $\overline{y} \in Y$,
  \[
    \| y_\varrho - \overline{y} \|_{H_Y} \leq
    \varrho \, \| D \overline{y} \|_{H_Y}, \quad
    \| y_\varrho - \overline{y} \|_D \leq \varrho^{1/2} \,
    \| D \overline{y} \|_{H_Y} 
  \]
  follow.
\end{lemma}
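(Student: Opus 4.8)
The plan is to treat the three regularity regimes of $\overline{y}$ separately, and in each case to test the variational formulation \eqref{Eqn:AbstractOCP:Abstract VF} with a carefully chosen element of $Y$. Throughout I would use that, since $D$ is self-adjoint and elliptic, $\langle D\cdot,\cdot\rangle_{Y^*,Y}$ is an inner product, so that $\|\cdot\|_D$ satisfies the Cauchy--Schwarz inequality.

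For $\overline{y}\in H_Y$ the subtlety, and the main obstacle, is that $\overline{y}$ need not lie in $Y$, so it is not an admissible test function; this rules out a direct energy test for the error and forces an algebraic argument for the third bound. I would first test \eqref{Eqn:AbstractOCP:Abstract VF} with $y=y_\varrho$ to obtain the identity $\|y_\varrho\|_{H_Y}^2+\varrho\|y_\varrho\|_D^2=\langle\overline{y},y_\varrho\rangle_{H_Y}$. Both terms on the left are nonnegative, so Cauchy--Schwarz on the right gives $\|y_\varrho\|_{H_Y}\le\|\overline{y}\|_{H_Y}$, and retaining instead the $D$-term gives $\varrho\|y_\varrho\|_D^2\le\|\overline{y}\|_{H_Y}^2$; these are the first two claims. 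For the third I would expand $\|y_\varrho-\overline{y}\|_{H_Y}^2=\|y_\varrho\|_{H_Y}^2-2\langle y_\varrho,\overline{y}\rangle_{H_Y}+\|\overline{y}\|_{H_Y}^2$ and substitute the identity above for $\langle y_\varrho,\overline{y}\rangle_{H_Y}$, which leaves $\|y_\varrho-\overline{y}\|_{H_Y}^2=\|\overline{y}\|_{H_Y}^2-\|y_\varrho\|_{H_Y}^2-2\varrho\|y_\varrho\|_D^2\le\|\overline{y}\|_{H_Y}^2$.

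For $\overline{y}\in Y$ the error $e:=y_\varrho-\overline{y}$ is itself admissible. Rewriting \eqref{Eqn:AbstractOCP:Abstract VF} as the error equation $\langle e,y\rangle_{H_Y}+\varrho\langle De,y\rangle_{Y^*,Y}=-\varrho\langle D\overline{y},y\rangle_{Y^*,Y}$ and testing with $y=e$ yields $\|e\|_{H_Y}^2+\varrho\|e\|_D^2=-\varrho\langle D\overline{y},e\rangle_{Y^*,Y}\le\varrho\|\overline{y}\|_D\|e\|_D$. Dropping the $H_Y$-term gives $\|e\|_D\le\|\overline{y}\|_D$ (fourth claim), and feeding this back gives $\|e\|_{H_Y}^2\le\varrho\|\overline{y}\|_D^2$ (fifth claim). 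The remaining bound $\|y_\varrho\|_D\le\|\overline{y}\|_D$ would only follow with a factor $2$ from the triangle inequality, so instead I would invoke minimality, $\check{\mathcal{J}}(y_\varrho)\le\check{\mathcal{J}}(\overline{y})=\tfrac12\varrho\|\overline{y}\|_D^2$, which uses $\overline{y}\in Y$ as a competitor and immediately forces $\varrho\|y_\varrho\|_D^2\le\varrho\|\overline{y}\|_D^2$.

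Finally, for $\overline{y}\in Y$ with $D\overline{y}\in H_Y$, the key observation is that the duality pairing $\langle D\overline{y},e\rangle_{Y^*,Y}$ then coincides with the inner product $\langle D\overline{y},e\rangle_{H_Y}$, since the pairing extends the $H_Y$-product. Reusing the error equation tested with $e$, I would bound the right-hand side by $\varrho\|D\overline{y}\|_{H_Y}\|e\|_{H_Y}$; discarding $\varrho\|e\|_D^2$ gives $\|e\|_{H_Y}\le\varrho\|D\overline{y}\|_{H_Y}$ (seventh claim), while discarding $\|e\|_{H_Y}^2$ and substituting this estimate gives $\varrho\|e\|_D^2\le\varrho^2\|D\overline{y}\|_{H_Y}^2$, hence the eighth claim. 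I expect no genuine difficulty beyond this bookkeeping; the only point requiring real care is the admissibility of the test functions, which is precisely why the $\overline{y}\in H_Y$ case needs the expansion trick rather than a direct energy test, and why minimality is the cleanest route to the sharp $D$-bound on $y_\varrho$ when $\overline{y}\in Y$.
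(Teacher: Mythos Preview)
Your proposal is correct and follows essentially the same route as the paper: the same test functions ($y_\varrho$ for $\overline{y}\in H_Y$, the error $e=y_\varrho-\overline{y}$ for $\overline{y}\in Y$) and the same Cauchy--Schwarz / duality bounds. The only cosmetic difference is the sixth estimate $\|y_\varrho\|_D\le\|\overline{y}\|_D$: you invoke minimality $\check{\mathcal{J}}(y_\varrho)\le\check{\mathcal{J}}(\overline{y})$, while the paper stays at the variational level and decomposes $\varrho\langle Dy_\varrho,\overline{y}-y_\varrho\rangle_{Y^*,Y}=\varrho\langle Dy_\varrho,\overline{y}\rangle_{Y^*,Y}-\varrho\|y_\varrho\|_D^2$ to reach the same identity $\|\overline{y}-y_\varrho\|_{H_Y}^2+\varrho\|y_\varrho\|_D^2=\varrho\langle Dy_\varrho,\overline{y}\rangle_{Y^*,Y}$. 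These are equivalent (the minimality inequality at the competitor $\overline{y}$ is exactly this test), so there is no substantive divergence.
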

\begin{proof}
  For the particular test function $y = y_\varrho \in Y$ we first have
  \[
    \langle y_\varrho , y_\varrho \rangle_{H_Y} +
    \varrho \, \langle D y_\varrho , y_\varrho  \rangle_{Y^*,Y} =
    \langle \overline{y} , y_\varrho \rangle_{H_Y} \leq
    \| \overline{y} \|_{H_Y} \| y_\varrho \|_{H_Y},
  \]
  i.e.,
  \[
    \| y_\varrho \|_{H_Y} \leq \| \overline{y} \|_{H_Y} , \quad
    \| y_\varrho \|_D \leq \varrho^{-1/2} \, \| \overline{y} \|_{H_Y} .
  \]
  Moreover, we also obtain
  \[
    \varrho \, \| y_\varrho \|_D^2 =
    \varrho \, \langle D y_\varrho , y_\varrho \rangle_{Y^*,Y} =
    \langle \overline{y} - y_\varrho , y_\varrho \rangle_{H_Y} =
    \langle \overline{y} - y_\varrho , \overline{y} \rangle_{H_Y} -
    \langle \overline{y} - y_\varrho , \overline{y} - y_\varrho \rangle_{H_Y},
  \]
  which gives
  \[
    \| y_\varrho - \overline{y} \|_{H_Y}^2 + \varrho \, \| y_\varrho \|^2_D =
    \langle \overline{y} - y_\varrho , \overline{y} \rangle_{H_Y} \leq
    \| \overline{y} - y_\varrho \|_{H_Y} \| \overline{y} \|_{H_Y} ,
  \]
  i.e.,
  \[
    \| y_\varrho - \overline{y} \|_{H_Y} \leq \| \overline{y} \|_{H_Y}.
  \]
  When assuming $\overline{y} \in Y$, we can choose
  $y = \overline{y} - y_\varrho \in Y$ as test function to conclude
  \begin{eqnarray*}
    \| \overline{y} - y_\varrho \|^2_{H_Y}
    & = & \langle \overline{y} - y_\varrho , \overline{y} - y_\varrho
          \rangle_{H_Y} \, = \,
          \varrho \, \langle D y_\varrho ,
          \overline{y} - y_\varrho \rangle_{Y^*,Y} \\
    & = & \varrho \, \langle D \overline{y} , \overline{y} - y_\varrho
          \rangle_{Y^*,Y} - \varrho \, \langle D (\overline{y} - y_\varrho),
          \overline{y} - y_\varrho \rangle_{Y^*,Y},
  \end{eqnarray*}
  i.e.,
  \[
    \| y_\varrho - \overline{y} \|_{H_Y}^2 + \varrho \,
    \| y_\varrho - \overline{y} \|_D^2 =
    \varrho \, \langle D \overline{y} , \overline{y} - y_\varrho \rangle_{H_Y}
    \leq
    \varrho \, \| \overline{y} \|_S \| y_\varrho - \overline{y} \|_D .
  \]
  Hence,
  \[
    \| y_\varrho - \overline{y} \|_D \leq \| \overline{y} \|_D , \quad
    \| y_\varrho - \overline{y} \|_{H_Y} \leq \varrho^{1/2} \,
    \| \overline{y} \|_D .
  \]
  On the other hand, we can also write
  \[
    \| \overline{y} - y_\varrho \|^2_{H_Y}
    =
    \varrho \, \langle D y_\varrho ,
    \overline{y} - y_\varrho \rangle_{Y^*,Y} =
    \varrho \, \langle D y_\varrho , \overline{y} \rangle_{Y^*,Y} -
    \varrho \, \langle D y_\varrho , y_\varrho \rangle_{Y^*,Y},
  \]
  i.e.,
  \[
    \| \overline{y} - y_\varrho \|^2_{H_Y}
    +
    \varrho \, \| y_\varrho \|^2_D
    =
    \varrho \, \langle D y_\varrho , \overline{y} \rangle_{Y^*,Y}
    \leq \varrho \, \| y_\varrho \|_D \| \overline{y} \|_D ,
  \]
  and hence,
  \[
    \| y_\varrho \|_D \leq \| \overline{y} \|_D .
  \]
  Finally, if $D \overline{y} \in H_Y$,
  \[
    \| y_\varrho - \overline{y} \|_{H_Y}^2 + \varrho \,
    \| y_\varrho - \overline{y} \|_D^2 =
    \varrho \, \langle D \overline{y} , \overline{y} - y_\varrho \rangle_{Y^*,Y}
    \leq \varrho \, \| D \overline{y} \|_{H_Y}
    \| y_\varrho - \overline{y} \|_{H_Y},
  \]
  i.e.,
  \[
    \| y_\varrho - \overline{y} \|_{H_Y} \leq
    \varrho \, \| D \overline{y} \|_{H_Y}, \quad
    \| y_\varrho - \overline{y} \|_D \leq \varrho^{1/2} \,
    \| D \overline{y} \|_{H_Y} .
  \]
\end{proof}

\noindent
When using the results of
Lemma \ref{Lemma:AbstractOCP:RegularizationErrorUnconstrained}
we obtain a bound for the costs $\| u_\varrho \|_{X^*}$ for the
control $u_\varrho = B y_\varrho$, depending on the regularity of
the target $\overline{y}$.

\begin{cor}
  From the gradient equation
  $\varrho D y_\varrho + y_\varrho = \overline{y}$, see
  \eqref{Eqn:AbstractOCP:Abstract VF}, we obtain
  \[
    \| D y_\varrho \|_{H_Y} = \frac{1}{\varrho} \,
    \| \overline{y} - y_\varrho \|_{H_Y} \leq \left \{
      \begin{array}{ccl}
        \varrho^{-1} \, \| \overline{y} \|_{H_Y}
        & & \mbox{for} \, \overline{y} \in H_Y, \\[1mm]
        \varrho^{-1/2} \, \| \overline{y} \|_D
        & & \mbox{for} \; \overline{y} \in Y, \\[1mm]
        \| D \overline{y} \|_{H_Y}
        & & \mbox{for} \; \overline{y} \in Y, \, D \overline{y} \in H_Y ,
      \end{array} \right.
  \]
  as well as
  \[
    \| u_\varrho \|_{A^{-1}} = \| y_\varrho \|_S \leq 
    \frac{1}{c_1^D} \, \| y_\varrho \|_D
    \leq \frac{1}{c_1^D} \left \{
      \begin{array}{ccl}
        \varrho^{-1/2} \, \| \overline{y} \|_{H_Y}
        && \mbox{for} \; \overline{y} \in H_Y, \\[1mm]
        \| \overline{y} \|_D
        && \mbox{for} \; \overline{y} \in Y.
      \end{array}
      \right.
  \]
  In particular, for $\overline{y} \in Y$ the costs
  $\| u_\varrho \|_{A^{-1}}$ of the control $u_\varrho = B y_\varrho$ are
  uniformly bounded as $\varrho \to 0$. Moreover,
  $ \| y_\varrho - \overline{y} \|_{H_Y} \to 0 $
  as $\varrho \to 0$ implies $ u_\varrho \to \overline{u} = B \overline{y}$
  in $X^*$ in this case. However, in the more interesting case
  $\overline{y} \not \in Y$ we conclude $B \overline{y} \not\in U=X^*$,
  and $\| u_\varrho \|_{A^{-1}} \to \infty$ as $\varrho \to 0$. In this case
  we have to balance the regularization error
  $\| y_\varrho - \overline{y} \|_{H_Y}$ with the costs
  $\| u_\varrho \|_{A^{-1}}$ of the control $u_\varrho = B y_\varrho$ we are
  willing to pay.
\end{cor}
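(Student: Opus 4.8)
The plan is to read the Corollary as a bookkeeping consequence of Lemma~\ref{Lemma:AbstractOCP:RegularizationErrorUnconstrained} for the two displayed inequality chains, followed by two genuinely qualitative statements (convergence of the control for $\overline y\in Y$, blow-up for $\overline y\notin Y$) that need compactness arguments. First I would dispose of the inequalities. The variational equation \eqref{Eqn:AbstractOCP:Abstract VF} in strong form reads $\varrho\,D y_\varrho + y_\varrho=\overline y$, so $D y_\varrho=\varrho^{-1}(\overline y - y_\varrho)$, and taking the $H_Y$-norm yields the stated equality $\|D y_\varrho\|_{H_Y}=\varrho^{-1}\|\overline y - y_\varrho\|_{H_Y}$. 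The three bounds then follow by inserting, respectively, $\|y_\varrho-\overline y\|_{H_Y}\le\|\overline y\|_{H_Y}$, $\le\varrho^{1/2}\|\overline y\|_D$, and $\le\varrho\,\|D\overline y\|_{H_Y}$ from the three regimes of the Lemma. For the cost chain I would start from the identity \eqref{Eqn:AbstractOCP:Norm equivalence}, giving $\|u_\varrho\|_{A^{-1}}=\|y_\varrho\|_S$, use the lower norm-equivalence $c_1^D\|y\|_S\le\|y\|_D$ to get $\|y_\varrho\|_S\le (c_1^D)^{-1}\|y_\varrho\|_D$, and finally insert $\|y_\varrho\|_D\le\varrho^{-1/2}\|\overline y\|_{H_Y}$ ($\overline y\in H_Y$) and $\|y_\varrho\|_D\le\|\overline y\|_D$ ($\overline y\in Y$) from the Lemma. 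Uniform boundedness of the cost for $\overline y\in Y$ is then read off directly.

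The convergence $u_\varrho\to\overline u=B\overline y$ in $X^*$ for $\overline y\in Y$ needs one further step. Applying \eqref{Eqn:AbstractOCP:Norm equivalence} to $y_\varrho-\overline y\in Y$ gives $\|u_\varrho-\overline u\|_{A^{-1}}=\|y_\varrho-\overline y\|_S\le(c_1^D)^{-1}\|y_\varrho-\overline y\|_D$, and since the $A^{-1}$- and $X^*$-norms are equivalent it suffices to show $\|y_\varrho-\overline y\|_D\to 0$. I would establish this by density: the set $\{w\in Y: D w\in H_Y\}$ is dense in $Y$ for $\|\cdot\|_D$ (it is $D^{-1}H_Y$, with $H_Y$ dense in $Y^*$), so given $\varepsilon>0$ I pick such a $w$ with $\|\overline y - w\|_D<\varepsilon$; writing $y_\varrho(\cdot)$ for the linear solution map of \eqref{Eqn:AbstractOCP:Abstract VF} and splitting $y_\varrho(\overline y)-\overline y = y_\varrho(\overline y-w)+(y_\varrho(w)-w)+(w-\overline y)$, the stability bound $\|y_\varrho(g)\|_D\le\|g\|_D$ controls the first term by $\varepsilon$, the third estimate of the Lemma controls the middle term by $\varrho^{1/2}\|D w\|_{H_Y}\to0$, and the last term is below $\varepsilon$. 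Letting $\varrho\to0$ and then $\varepsilon\to0$ closes this.

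The last and, in my view, hardest assertion is that $\overline y\notin Y$ forces $\|u_\varrho\|_{A^{-1}}\to\infty$. That $B\overline y\notin X^*$ is just a matter of interpretation: were $\overline u=B\overline y$ an element of $X^*$, then $\overline y=B^{-1}\overline u\in Y$ by boundedness of $B^{-1}:X^*\to Y$, contradicting $\overline y\notin Y$. For the blow-up I would argue by contradiction, and here I expect the main obstacle. I first need the auxiliary fact that $y_\varrho\to\overline y$ strongly in $H_Y$ for \emph{every} $\overline y\in H_Y$, which is \emph{not} contained in Lemma~\ref{Lemma:AbstractOCP:RegularizationErrorUnconstrained} (that gives only boundedness when $\overline y\notin Y$); I would prove it from the minimization property $\check{\mathcal J}(y_\varrho)\le\check{\mathcal J}(y)$ of \eqref{Eqn:AbstractOCP:AbstractFunctionalD} together with the density of $Y$ in $H_Y$ from the Gelfand triple, since choosing $y$ close to $\overline y$ in $H_Y$ renders $\tfrac12\|y_\varrho-\overline y\|_{H_Y}^2\le\tfrac12\|y-\overline y\|_{H_Y}^2+\tfrac{\varrho}{2}\|y\|_D^2$ small in the limit $\varrho\to0$.

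Assuming then that $\|u_{\varrho_n}\|_{A^{-1}}$ stayed bounded along some $\varrho_n\to0$, boundedness in the Hilbert space $X^*$ yields a weakly convergent subsequence $u_{\varrho_n}\rightharpoonup u_*$; continuity of $B^{-1}$ gives $y_{\varrho_n}=B^{-1}u_{\varrho_n}\rightharpoonup B^{-1}u_*$ weakly in $Y$, hence weakly in $H_Y$ through the embedding $Y\hookrightarrow H_Y$. Comparing with the strong $H_Y$-limit forces $B^{-1}u_*=\overline y$, i.e.\ $\overline y\in Y$, the desired contradiction. The crux is thus the interplay of weak compactness in $X^*$, continuity of $B^{-1}$, and the strong $H_Y$-convergence $y_\varrho\to\overline y$; by contrast the two inequality chains are routine substitutions from the Lemma and the norm equivalences.
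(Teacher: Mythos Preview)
Your treatment of the two displayed inequality chains matches the paper exactly: the Corollary carries no proof in the paper and is meant to be read off directly from Lemma~\ref{Lemma:AbstractOCP:RegularizationErrorUnconstrained} together with the identity $Dy_\varrho=\varrho^{-1}(\overline y-y_\varrho)$ and the norm equivalence $c_1^D\|\cdot\|_S\le\|\cdot\|_D$, precisely as you write.

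Where you go beyond the paper is in the qualitative assertions. The paper states ``$\|y_\varrho-\overline y\|_{H_Y}\to0$ implies $u_\varrho\to B\overline y$ in $X^*$'' and ``$\overline y\notin Y$ forces $\|u_\varrho\|_{A^{-1}}\to\infty$'' as unproven commentary. You supply genuine arguments: a density/stability splitting to upgrade the Lemma's bound $\|y_\varrho-\overline y\|_D\le\|\overline y\|_D$ to convergence, and a weak-compactness contradiction for the blow-up. Both are correct. The density of $D^{-1}H_Y$ in $Y$ follows, as you say, from $D:Y\to Y^*$ being an isomorphism and $H_Y$ dense in $Y^*$; the auxiliary fact $y_\varrho\to\overline y$ in $H_Y$ for arbitrary $\overline y\in H_Y$ via the minimization inequality and density of $Y$ in $H_Y$ is clean and indeed not contained in the Lemma. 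Your contradiction argument gives exactly $\liminf_{\varrho\to0}\|u_\varrho\|_{A^{-1}}=\infty$, which is the full claim. In short: your proposal is correct and more complete than what the paper offers; the paper leaves these last two points as heuristic remarks rather than proving them.
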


\noindent
In particular for less regular target functions, e.g.,
$\overline{y} \in Y$ but $D \overline{y} \not\in H_Y$, or even
$\overline{y} \not\in Y$, we may include the regularization
parameter $\varrho$ in the definition of the regularization
operator $D_\varrho : Y \to Y^*$. Instead of
\eqref{Eqn:AbstractOCP:Abstract VF} we then consider the
variational formulation to find $y_\varrho \in Y$ such that
\begin{equation}\label{Eqn:AbstractOCP:Abstract VF Drho}
  \langle y_\varrho , y \rangle_{H_Y} +
  \langle D_\varrho y_\varrho , y  \rangle_{Y^*,Y} =
  \langle \overline{y} , y \rangle_{H_Y} \quad
  \mbox{for all} \; y \in Y .
\end{equation}
As in Lemma \ref{Lemma:AbstractOCP:RegularizationErrorUnconstrained}
we then conclude the regularization error estimates
\begin{equation}\label{Eqn:AbstractOCP:Regularization Drho}
  \| y_\varrho - \overline{y} \|_{H_Y} \leq \| \overline{y} \|_{H_Y}
  \quad \mbox{for} \; \overline{y} \in H_Y, \quad
  \| y_\varrho - \overline{y} \|_{H_Y} \leq
  \| \overline{y} \|_{D_\varrho} \quad \mbox{for} \;
  \overline{y} \in Y.
\end{equation}
  
\subsection{Galerkin discretization and error estimates}
\label{Subsection:AbstractOCP:Disrectization}
For the discretization of the variational problem
\eqref{Eqn:AbstractOCP:Abstract VF} we introduce a conforming
finite-dimensional subspace
$Y_h = \mbox{span} \{ \varphi_i \}_{i=1}^M \subset Y$
spanned by the $M=M(h)$ basis functions $\varphi_1,\ldots,\varphi_M$.
Here, $h$ denotes some positive discretization parameter such that
$h$ tends to zero when $M=M(h)$ goes to infinity. For example, one can
think about $h$ being the mesh-size in a finite element 
discretization as considered in Section~\ref{Section:Application}. 

At this time we assume that for any $y \in Y$ there exists a
projection $P_h y \in Y_h$ satisfying
\begin{equation}\label{Eqn:AbstractOCP:Approximation Assumption}
  \| y - P_h y \|_{H_Y} \leq
  c_1 \, h^\alpha \, \| y \|_D, \quad
  \| y - P_h y \|_D \, \leq \,
  c_2 \, \| y \|_D \, ,
\end{equation}
for some $\alpha > 0$, and with positive constants $c_1, c_2$.
Moreover, if $D y \in H_Y$ is satisfied for $y \in Y$, we also assume,
for some positive constants $c_3, c_4$,
\begin{equation}\label{Eqn:AbstractOCP:Approximation Assumption 2}
  \| y - P_h y \|_{H_Y} \leq
  c_3 \, h^{2\alpha} \, \| D y \|_{H_Y}, \quad
  \| y - P_h y \|_D \, \leq \, 
  c_4 \, h^\alpha \, \| D y \|_{H_Y} \, .
\end{equation}
The Galerkin discretization of the variational formulation
\eqref{Eqn:AbstractOCP:Abstract VF} reads as follows:
Find the Galerkin approximation 
$y_{\varrho h} \in Y_h$ to the state $y_{\varrho} \in Y$ such that 
\begin{equation}\label{Eqn:AbstractOCP:GalerkinVF}
  \langle y_{\varrho h} , y_h \rangle_{H_Y} +
  \varrho \, \langle D y_{\varrho h} , y_h  \rangle_{Y^*,Y} =
  \langle \overline{y} , y_h \rangle_{H_Y} \quad
  \mbox{for all} \; y_h \in Y_h .
\end{equation}
When using standard arguments, we conclude unique solvability of
\eqref{Eqn:AbstractOCP:GalerkinVF} as well as Cea's lemma,
\begin{equation}\label{Eqn:AbstractOCP:AbstractCea}
  \| y_\varrho - y_{\varrho h} \|^2_{H_Y} + \varrho \,
  \| y_\varrho - y_{\varrho h} \|^2_D \leq \inf\limits_{y_h \in Y_h} \Big[
  \| y_\varrho - y_h \|^2_{H_Y} + \varrho \, 
  \| y_\varrho - y_h \|^2_D \Big] ,
\end{equation}
and using \eqref{Eqn:AbstractOCP:Approximation Assumption} for
$y = y_\varrho \in Y$, this gives
\begin{equation}\label{Eqn:AbstractOCP:Error yrho}
  \| y_\varrho - y_{\varrho h} \|^2_{H_Y} + \varrho \,
  \| y_\varrho - y_{\varrho h} \|^2_D \leq 
  \Big[ c_1^2 \, h^{2\alpha} + c_2 \, \varrho \Big] \, \| y_\varrho \|^2_D
  \leq c \, h^{2\alpha} \, \| y_\varrho \|_D^2,
\end{equation}
when choosing
\begin{equation}\label{Eqn:AbstractOCP:Optimal Parameter}
\varrho = h^{2\alpha} \, .
\end{equation}
Otherwise, if the regularisation or cost parameter $\varrho$ is fixed,
we can not expect any further convergence for small discretization
parameters $h$ satisfying $h^{2\alpha} < \varrho$.

However, depending on the regularity of the target $\overline{y}$ we can
refine the error estimate \eqref{Eqn:AbstractOCP:AbstractCea} as follows:

\begin{lemma}
  Let $y_{\varrho h} \in Y_h$ be the unique solution of the Galerkin
  variational problem \eqref{Eqn:AbstractOCP:GalerkinVF}. Then there
  holds the error estimate, when choosing $\varrho = h^{2\alpha}$,
  \begin{equation}\label{Eqn:AbstractOCP:Error}
    \| y_\varrho - y_{\varrho h} \|_{H_Y}^2 + h^{2\alpha} \,
    \| y_\varrho - y_{\varrho h} \|_D^2
    \leq c \, \left \{
      \begin{array}{cl}
        \| \overline{y} \|^2_{H_Y}
        & \mbox{for} \; \overline{y} \in H_Y, \\[2mm]
        h^{2\alpha} \, \| \overline{y} \|_D^2
        & \mbox{for} \; \overline{y} \in Y, \\[2mm]
        h^{4\alpha} \, \| D \overline{y} \|_{H_Y}^2
        & \mbox{for} \; \overline{y} \in Y, \, D \overline{y} \in H_Y .
      \end{array} \right.
  \end{equation}
\end{lemma}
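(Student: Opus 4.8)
The plan is to feed the three stability estimates of Lemma~\ref{Lemma:AbstractOCP:RegularizationErrorUnconstrained} into the Galerkin error bound, treating the three regularity scenarios for $\overline{y}$ separately but through the same mechanism. The common starting point is Cea's lemma \eqref{Eqn:AbstractOCP:AbstractCea}, in which I would choose the particular competitor $y_h = P_h y_\varrho \in Y_h$, so that the right-hand side becomes $\| y_\varrho - P_h y_\varrho \|_{H_Y}^2 + \varrho \, \| y_\varrho - P_h y_\varrho \|_D^2$ and can be controlled by the approximation assumptions. Setting $\varrho = h^{2\alpha}$ throughout, the two factors $h^{2\alpha}$ and $\varrho$ scale identically, which is exactly what makes the balancing work.

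For the first two cases I would simply invoke the estimate \eqref{Eqn:AbstractOCP:Error yrho} already derived from Cea's lemma and the coarse approximation property \eqref{Eqn:AbstractOCP:Approximation Assumption}, namely
\[
  \| y_\varrho - y_{\varrho h} \|_{H_Y}^2 + h^{2\alpha} \, \| y_\varrho - y_{\varrho h} \|_D^2 \leq c \, h^{2\alpha} \, \| y_\varrho \|_D^2 ,
\]
and then reduce $\| y_\varrho \|_D$ to a target quantity using Lemma~\ref{Lemma:AbstractOCP:RegularizationErrorUnconstrained}. For $\overline{y} \in H_Y$ the bound $\| y_\varrho \|_D \leq \varrho^{-1/2}\,\| \overline{y} \|_{H_Y} = h^{-\alpha}\,\| \overline{y} \|_{H_Y}$ lets the factor $h^{2\alpha}$ cancel and yields $c\,\| \overline{y} \|_{H_Y}^2$; for $\overline{y} \in Y$ the sharper bound $\| y_\varrho \|_D \leq \| \overline{y} \|_D$ leaves $h^{2\alpha}$ in place and yields $c\,h^{2\alpha}\,\| \overline{y} \|_D^2$. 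These are precisely the first two lines of \eqref{Eqn:AbstractOCP:Error}.

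The third case is where the argument must be refined, and this is the step I expect to carry the real content. The coarse approximation property \eqref{Eqn:AbstractOCP:Approximation Assumption} only produces the $h^{2\alpha}$-rate, so to reach $h^{4\alpha}$ I would instead use the higher-order assumption \eqref{Eqn:AbstractOCP:Approximation Assumption 2}, applied to $y = y_\varrho$. This is legitimate only if $D y_\varrho \in H_Y$, so the crucial preliminary step is to read off from the gradient equation \eqref{Eqn:AbstractOCP:Abstract VF} that $D y_\varrho = \varrho^{-1}(\overline{y} - y_\varrho)$, which indeed lies in $H_Y$ whenever $\overline{y} \in Y \subset H_Y$, together with the quantitative bound $\| D y_\varrho \|_{H_Y} = \varrho^{-1}\,\| \overline{y} - y_\varrho \|_{H_Y} \leq \| D \overline{y} \|_{H_Y}$ coming from the regularization estimate $\| y_\varrho - \overline{y} \|_{H_Y} \leq \varrho\,\| D \overline{y} \|_{H_Y}$ of Lemma~\ref{Lemma:AbstractOCP:RegularizationErrorUnconstrained}.

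With this regularity secured, inserting \eqref{Eqn:AbstractOCP:Approximation Assumption 2} into Cea's lemma for $y_h = P_h y_\varrho$ gives
\[
  \| y_\varrho - y_{\varrho h} \|_{H_Y}^2 + \varrho \, \| y_\varrho - y_{\varrho h} \|_D^2 \leq \big( c_3^2\,h^{4\alpha} + \varrho\,c_4^2\,h^{2\alpha} \big)\,\| D y_\varrho \|_{H_Y}^2 ,
\]
and with $\varrho = h^{2\alpha}$ both summands collapse to the common order $h^{4\alpha}$; bounding $\| D y_\varrho \|_{H_Y}$ by $\| D \overline{y} \|_{H_Y}$ then delivers the third line of \eqref{Eqn:AbstractOCP:Error}. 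The only genuine obstacle is thus the regularity verification $D y_\varrho \in H_Y$ in the third case; once that is in hand, the three estimates are routine consequences of combining Cea's lemma, the two approximation assumptions, and the stability bounds of Lemma~\ref{Lemma:AbstractOCP:RegularizationErrorUnconstrained}.
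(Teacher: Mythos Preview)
Your argument is correct, but for the second and third cases it follows a different path from the paper. You apply the approximation properties \eqref{Eqn:AbstractOCP:Approximation Assumption} and \eqref{Eqn:AbstractOCP:Approximation Assumption 2} directly to $y_\varrho$, and then push the resulting bounds on $\| y_\varrho \|_D$ or $\| D y_\varrho \|_{H_Y}$ back to $\overline{y}$ via Lemma~\ref{Lemma:AbstractOCP:RegularizationErrorUnconstrained}. The paper instead inserts the target $\overline{y}$ by a triangle inequality inside Cea's lemma, splitting $\| y_\varrho - y_h \|$ into $\| y_\varrho - \overline{y} \|$ (controlled by the regularization estimates of Lemma~\ref{Lemma:AbstractOCP:RegularizationErrorUnconstrained}) and $\| \overline{y} - P_h \overline{y} \|$ (controlled by \eqref{Eqn:AbstractOCP:Approximation Assumption} or \eqref{Eqn:AbstractOCP:Approximation Assumption 2} applied to $\overline{y}$ directly). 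For the first case, the paper simply takes $y_h = 0$ in Cea's lemma. Your route is slightly more economical in cases one and two, since it reuses \eqref{Eqn:AbstractOCP:Error yrho} verbatim; in case three it costs you the extra regularity check $D y_\varrho \in H_Y$, which you handle correctly via the gradient equation, whereas the paper's splitting avoids this step because the approximation hypothesis is invoked only on $\overline{y}$, whose regularity is assumed outright.
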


\begin{proof}
  For $\overline{y} \in H_Y$, the estimate follows from
  Cea's lemma \eqref{Eqn:AbstractOCP:AbstractCea}
  for the particular test function $v_h=0$, and using
  Lemma~\ref{Lemma:AbstractOCP:RegularizationErrorUnconstrained}
  for $\overline{y} \in H_Y$,
  \[
    \| y_\varrho - y_{\varrho h} \|_{H_Y}^2 + \varrho \,
    \| y_\varrho - y_{\varrho h} \|_D^2
    \leq
    \| y_\varrho \|^2_{H_Y} + \varrho \, \| y_\varrho \|_D^2 \leq
    2 \, \| \overline{y} \|_{H_Y}^2 .
  \]
  For $\overline{y} \in Y$, using \eqref{Eqn:AbstractOCP:AbstractCea},
  the triangle inequality, 
  Lemma~\ref{Lemma:AbstractOCP:RegularizationErrorUnconstrained},
  and \eqref{Eqn:AbstractOCP:Approximation Assumption}, we have
  \begin{eqnarray*}
    \| y_\varrho - y_{\varrho h} \|^2_{H_Y}
    + \varrho \, \| y_\varrho - y_{\varrho h} \|^2_D
    & \leq & \inf\limits_{y_h \in Y_h} \Big[
             \| y_\varrho - y_h \|^2_{H_Y}
             + \varrho \, \| y_\varrho - y_h \|^2_D \Big] \\
    && \hspace*{-5cm}
       \leq \, 2 \, \| y_\varrho - \overline{y} \|^2_{H_Y} +
       2 \, \varrho \, \| y_\varrho - \overline{y} \|_D^2 +
       2 \inf\limits_{y_h \in Y_h} \Big[
       \| \overline{y} - y_h \|^2_{H_Y}
       + \varrho \, \| \overline{y} - y_h \|^2_D \Big] \\
    && \hspace*{-5cm}
       \leq \, 
       4 \, \varrho \, \| \overline{y} \|_D^2 +
       2 \, \Big[
       \| \overline{y} - P_h \overline{y} \|^2_{H_Y}
       + \varrho \, \| \overline{y} - P_h \overline{y} \|^2_D \Big] \\
    && \hspace*{-5cm}
       \leq \, \Big[ 2 \, c_1^2 \, h^{2\alpha} 
       + \varrho \, (4+c_2^2) \Big] \, \| \overline{y} \|_D^2 \\
    && \hspace*{-5cm} = \, c \, h^{2\alpha} \, \| \overline{y} \|_D^2 
  \end{eqnarray*}
  when choosing $\varrho = h^{2\alpha}$.
  If in addition $S \overline{y} \in H_Y$ is satisfied, the proof of the
  third estimate follows the same lines.
\end{proof}

\noindent
Since the Galerkin approximation $y_{\varrho h}$ to the state $y_\varrho$ as 
solution of the minimization problem
\eqref{Eqn:AbstractOCP:AbstractFunctionalD} is an approximation of the
desired target $\overline{y}$, we are interested in estimates for the
related error $y_{\varrho h}-\overline{y}$ in the $H_Y$ norm. 

\begin{theorem}
  Let $y_{\varrho h} \in Y_h$ be the unique solution of 
  \eqref{Eqn:AbstractOCP:GalerkinVF}, and choose
  $\varrho = h^{2\alpha}$. Then there hold the error estimates
  \begin{equation}\label{Eqn:AbstractOCP:Error target}
    \| y_{\varrho h} - \overline{y} \|_{H_Y} \leq c \, \left \{
      \begin{array}{ccl}
        \| \overline{y} \|_{H_Y}
        & & \mbox{for} \; \overline{y} \in H_Y, \\[2mm]
        h^\alpha \, \| \overline{y} \|_D
        & & \mbox{for} \; \overline{y} \in Y, \\[2mm]
        h^{2\alpha} \, \| D \overline{y} \|_{H_Y}
        && \mbox{for} \; \overline{y} \in Y, \, D \overline{y} \in H_Y .
      \end{array} \right.
  \end{equation}
  When using space interpolation arguments, and
  assuming $\overline{y} \in [H_Y,Y]_{|s}$ for some $s \in [0,1]$,
  we finally conclude the error estimate
  \begin{equation}\label{Eqn:AbstractOCP:Error Interpolation}
    \| y_{\varrho h} - \overline{y} \|_{H_Y} \leq c \, h^{\alpha s} \,
    \| \overline{y} \|_{[H_Y,Y]_{|s}} \quad
    \mbox{for} \; \overline{y} \in [H_Y,Y]_{|s} , \; s \in [0,1].
  \end{equation}
\end{theorem}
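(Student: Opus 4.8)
The plan is to prove the three pointwise estimates in \eqref{Eqn:AbstractOCP:Error target} by inserting the target $\overline{y}$ between $y_{\varrho h}$ and $y_\varrho$ via the triangle inequality, and then combining the regularization error bounds from Lemma~\ref{Lemma:AbstractOCP:RegularizationErrorUnconstrained} with the Galerkin discretization error bounds from the preceding lemma \eqref{Eqn:AbstractOCP:Error}. First I would write
\[
  \| y_{\varrho h} - \overline{y} \|_{H_Y} \leq
  \| y_{\varrho h} - y_\varrho \|_{H_Y} +
  \| y_\varrho - \overline{y} \|_{H_Y},
\]
so that the total error splits into a discretization part and a regularization part. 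The first term is controlled by the square-root of the left-hand side of \eqref{Eqn:AbstractOCP:Error}, where the $\varrho\,\|\cdot\|_D^2$ contribution is simply dropped since it is nonnegative; the second term is exactly what Lemma~\ref{Lemma:AbstractOCP:RegularizationErrorUnconstrained} bounds.

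For each regularity class the two contributions turn out to be of the same order once $\varrho = h^{2\alpha}$ is substituted, which is the whole point of the optimal coupling \eqref{Eqn:AbstractOCP:Optimal Parameter}. Concretely, for $\overline{y} \in H_Y$ both terms are $O(\|\overline{y}\|_{H_Y})$; for $\overline{y} \in Y$ the discretization term from \eqref{Eqn:AbstractOCP:Error} gives $h^\alpha \|\overline{y}\|_D$ and Lemma~\ref{Lemma:AbstractOCP:RegularizationErrorUnconstrained} gives $\varrho^{1/2}\|\overline{y}\|_D = h^\alpha\|\overline{y}\|_D$; and for $\overline{y}\in Y$ with $D\overline{y}\in H_Y$ both terms are $O(h^{2\alpha}\|D\overline{y}\|_{H_Y})$. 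In each case I would collect the constants into a single generic constant $c$, which is legitimate since there are only finitely many of them. This establishes \eqref{Eqn:AbstractOCP:Error target} line by line.

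For the interpolation estimate \eqref{Eqn:AbstractOCP:Error Interpolation} the plan is to view the solution map $\overline{y}\mapsto y_{\varrho h}-\overline{y}$, for the fixed coupling $\varrho=h^{2\alpha}$, as a bounded linear operator $T_h$ acting between the appropriate spaces. The first line of \eqref{Eqn:AbstractOCP:Error target} shows $T_h$ is bounded from $H_Y$ into $H_Y$ with norm $\leq c$ (the $s=0$ endpoint), while the second line shows $T_h$ is bounded from $Y$ into $H_Y$ with norm $\leq c\,h^\alpha$ (the $s=1$ endpoint). Invoking the standard interpolation inequality for the interpolation space $[H_Y,Y]_{|s}$ then yields, for $\overline{y}\in[H_Y,Y]_{|s}$,
\[
  \| y_{\varrho h} - \overline{y} \|_{H_Y}
  = \| T_h \overline{y} \|_{H_Y}
  \leq c^{1-s}\,(c\,h^\alpha)^{s}\,\| \overline{y} \|_{[H_Y,Y]_{|s}}
  \leq c\, h^{\alpha s}\,\| \overline{y} \|_{[H_Y,Y]_{|s}},
\]
which is \eqref{Eqn:AbstractOCP:Error Interpolation}.

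The only genuinely delicate point is the linearity and uniform boundedness of $T_h$ needed to apply the interpolation theorem. Linearity in $\overline{y}$ is clear from the variational problem \eqref{Eqn:AbstractOCP:GalerkinVF}, since the right-hand side depends linearly on $\overline{y}$ and the bilinear form on the left does not involve $\overline{y}$; hence $\overline{y}\mapsto y_{\varrho h}$ is linear, and subtracting the identity keeps it linear. The subtle issue is that the two endpoint bounds must hold with constants independent of $h$ for the fixed coupling $\varrho=h^{2\alpha}$, so that interpolating at each fixed $h$ produces a constant uniform in $h$; this is exactly what the preceding lemma and Lemma~\ref{Lemma:AbstractOCP:RegularizationErrorUnconstrained} guarantee, since their constants do not depend on $h$ or $\varrho$. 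With that in hand the interpolation argument is routine.
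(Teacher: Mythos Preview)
Your proposal is correct and is precisely the argument the paper has in mind: the theorem is stated without proof immediately after the lemma providing \eqref{Eqn:AbstractOCP:Error}, because it follows by exactly the triangle-inequality splitting you describe, combining Lemma~\ref{Lemma:AbstractOCP:RegularizationErrorUnconstrained} for the regularization error with \eqref{Eqn:AbstractOCP:Error} for the discretization error, and then a standard operator-interpolation argument for \eqref{Eqn:AbstractOCP:Error Interpolation}. Your remarks on linearity of $\overline{y}\mapsto y_{\varrho h}$ and on the $h$-independence of the endpoint constants are the right points to check and are handled correctly.
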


\subsection{Control recovering}
\label{Subsection:AbstractOCP:ControlRecovering}
When an approximate optimal state $y_{\varrho h}$ is known we can
compute the associated optimal control
$\widetilde{u}_\varrho = B y_{\varrho h} \in U = X^*$ and an approximate
control $\widetilde{u}_{\varrho h} \in U_h$ via post processing, where
$U_h = \mbox{span} \{ \psi_k \}_{k=1}^N \subset U$ is a suitable ansatz space.
For this we consider the variational formulation to find
$\widetilde{u}_{\varrho h} \in U_h$ such that
\begin{equation}\label{Eqn:AbstractOCP:Control VF}
  \langle \widetilde{u}_{\varrho h} , x_h \rangle_{X^*,X}
  =
  \langle B y_{\varrho h} , x_h \rangle_{X^*,X}
  \quad \mbox{for all} \; x_h \in X_h ,
\end{equation}
where $X_h = \mbox{span} \{ \phi_k \}_{k=1}^N \subset X$ is a suitable
test space. As in \eqref{Eqn:AbstractOCP:Approximation Assumption}
we assume that there exists a projection operator
$\Pi_h : X \to X_h$ satisfying the error estimate
\begin{equation}\label{Eqn:AbstractOCP:Assumption Pi}
\| x - \Pi_h x \|_X \leq c \, h^\alpha \, \| B^* x \|_{H_Y} .
\end{equation}
In order to ensure unique solvability of the
Galerkin--Petrov variational formulation
\eqref{Eqn:AbstractOCP:Control VF} we need to assume the discrete
inf-sup stability condition
\begin{equation}\label{Eqn:AbstractOCP:Control inf-sup}
  c_S \, \| u_h \|_{X^*} \leq
  \sup\limits_{0 \neq x_h \in X_h} \frac{\langle u_h,x_h \rangle_{X^*,X}}
  {\| x_h \|_X} \quad \mbox{for all} \; u_h \in U_h .
\end{equation}
In addition to \eqref{Eqn:AbstractOCP:Control VF} we consider
the variational formulation to find $u_{\varrho h} \in U_h$ such that
\[
  \langle u_{\varrho h} , x_h \rangle_{X^*,X} =
  \langle u_\varrho , x_h \rangle_{X^*,X} =
  \langle B y_\varrho , x_h \rangle_{X^*,X}
  \quad \mbox{for all} \; x_h \in X_h ,
\]
and we conclude the perturbed Galerkin orthogonality
\[
  \langle u_{\varrho h} - \widetilde{u}_{\varrho h} , x_h \rangle_{X^*,X}
  =
  \langle B (y_\varrho - y_{\varrho h}), x_h \rangle_{X^*,X}
  \quad \mbox{for all} \; x_h \in X_h .
\]
Moreover, using standard arguments, we conclude the error estimate
\[
  \| u_\varrho - u_{\varrho h} \|_{X^*}
  \leq \frac{1}{c_S} \inf\limits_{u_h \in U_h}
  \| u_\varrho - u_h \|_{X^*} \leq 
  \frac{1}{c_S} \, \| u_\varrho \|_{X^*} =
  \frac{1}{c_S} \, \| B y_\varrho \|_{X^*} \leq 
  \frac{c_2^B}{c_S} \, \| y_\varrho \|_Y .
\]
Now, using the discrete inf-sup stability condition
\eqref{Eqn:AbstractOCP:Control inf-sup}
as well as Cea's lemma \eqref{Eqn:AbstractOCP:AbstractCea} for
$y_h = P_h y_\varrho$ we obtain, choosing $\varrho = h^{2\alpha}$,
\begin{eqnarray*}
  c_S \, \| u_{\varrho h} - \widetilde{u}_{\varrho h} \|_{X^*}
  & \leq & \sup\limits_{0 \neq x_h \in X_h}
           \frac{\langle u_{\varrho h} - \widetilde{u}_{\varrho h},x_h
           \rangle_{X^*,X}}{\| x_h \|_X} \\
  & = & \sup\limits_{0 \neq x_h \in X_h}
        \frac{\langle B(y_\varrho-y_{\varrho h}),x_h
        \rangle_{X^*,X}}{\| x_h \|_X} \\
  & \leq & \| B (y_\varrho - y_{\varrho h}) \|_{X^*} \, \leq \,
           c_2^B \, \| y_\varrho - y_{\varrho h} \|_Y \, \leq \,
           \frac{c_2^B}{c_1^D} \, \| y_\varrho - y_{\varrho h} \|_D \\
  & \leq & \frac{c_2^B}{c_1^D} \sqrt{
           \frac{1}{\varrho} \, \| y_\varrho - P_h y_\varrho \|_{H_Y}^2 +
           \| y_\varrho - P_h y_\varrho \|_D^2 } \\
  & \leq & \frac{c_2^B}{c_1^D} \sqrt{
           \frac{1}{\varrho} \, c_1^2 \, h^{2\alpha} \,
           \| y_\varrho \|_D^2 + c_2^2 \, 
           \| y_\varrho \|^2_D } \, = \, c \, \| y_\varrho \|_D \, .
\end{eqnarray*}
Therefore,
\[
  \| u_\varrho - \widetilde{u}_{\varrho h} \|_{X^*}
  \, \leq \, \| u_\varrho - u_{\varrho h} \|_{X^*} +
  \| u_{\varrho h} - \widetilde{u}_{\varrho h} \|_{X^*}
  \, \leq \, c \, \| y_\varrho \|_D \, .
\]
follows. 
\begin{lemma}
  Let the assumptions \eqref{Eqn:AbstractOCP:Assumption Pi} and
  \eqref{Eqn:AbstractOCP:Control inf-sup} hold true. Further,
  let $y_\varrho \in Y$ be the unique solution of the variational
  formulation \eqref{Eqn:AbstractOCP:Abstract VF}, and let
  $u_\varrho = B y_\varrho \in U = X^*$ be the associated control.
  For $y_{\varrho h} \in Y_h$ being the unique solution of the
  Galerkin variational formulation \eqref{Eqn:AbstractOCP:GalerkinVF}
  we compute $\widetilde{u}_{\varrho h} \in U_h$
  as unique solution of the Galerkin--Petrov variational formulation
  \eqref{Eqn:AbstractOCP:Control VF}. For this
  approximate control $\widetilde{u}_{\varrho h} \in U_h$
  we obtain the associated state
  $\widetilde{y}_\varrho = B^{-1} \widetilde{u}_{\varrho h}$. For
  $\overline{y} \in H_Y$ we then have the error estimate
  \begin{equation}\label{Eqn:AbstractOCP:Final Error 1}
    \| \widetilde{y}_\varrho - \overline{y} \|_{H_Y} \leq c \,
    \| \overline{y} \|_{H_Y},
  \end{equation}
  while for $\overline{y} \in Y$ we have
  \begin{equation}\label{Eqn:AbstractOCP:Final Error 2}
    \| \widetilde{y}_\varrho - \overline{y} \|_{H_Y} \leq c \, h^\alpha \,
    \| \overline{y} \|_Y
  \end{equation}
  when choosing $\varrho = h^{2\alpha}$, and where $\alpha$ is given
  by the approximation property
  \eqref{Eqn:AbstractOCP:Approximation Assumption}.
\end{lemma}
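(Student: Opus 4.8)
The plan is to split the error by the triangle inequality,
\[
  \|\widetilde{y}_\varrho - \overline{y}\|_{H_Y}
  \leq \|\widetilde{y}_\varrho - y_\varrho\|_{H_Y}
     + \|y_\varrho - \overline{y}\|_{H_Y},
\]
and to dispose of the second term directly by
Lemma~\ref{Lemma:AbstractOCP:RegularizationErrorUnconstrained}, which for
$\overline{y}\in H_Y$ gives $\|y_\varrho - \overline{y}\|_{H_Y}\leq\|\overline{y}\|_{H_Y}$
and for $\overline{y}\in Y$ gives $\|y_\varrho - \overline{y}\|_{H_Y}\leq\varrho^{1/2}\|\overline{y}\|_D=h^\alpha\|\overline{y}\|_D$
once $\varrho=h^{2\alpha}$ is inserted. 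Since $\|\cdot\|_D$, $\|\cdot\|_S$ and $\|\cdot\|_Y$
are mutually equivalent on $Y$, the latter is bounded by $c\,h^\alpha\|\overline{y}\|_Y$, already
matching the target rate. The whole difficulty is therefore concentrated in the consistency term
$\|\widetilde{y}_\varrho - y_\varrho\|_{H_Y}$ with $\widetilde{y}_\varrho - y_\varrho = B^{-1}(\widetilde{u}_{\varrho h}-u_\varrho)$.

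First I would record the two facts already available: the cost estimate
$\|u_\varrho - \widetilde{u}_{\varrho h}\|_{X^*}\leq c\,\|y_\varrho\|_D$ derived just above the
statement, and the perturbed Galerkin orthogonality, which after inserting the definitions of
$u_{\varrho h}$ and $\widetilde{u}_{\varrho h}$ reads
$\langle\widetilde{u}_{\varrho h}-u_\varrho, x_h\rangle_{X^*,X}=\langle B(y_{\varrho h}-y_\varrho), x_h\rangle_{X^*,X}$
for all $x_h\in X_h$. The naive route via $\|B^{-1}(\widetilde{u}_{\varrho h}-u_\varrho)\|_Y\leq (c_1^B)^{-1}\|\widetilde{u}_{\varrho h}-u_\varrho\|_{X^*}\leq c\,\|y_\varrho\|_D$
followed by the embedding $Y\hookrightarrow H_Y$ only yields an $O(1)$ bound and fails, most visibly
for $\overline{y}\in H_Y$, where $\|y_\varrho\|_D\leq\varrho^{-1/2}\|\overline{y}\|_{H_Y}=h^{-\alpha}\|\overline{y}\|_{H_Y}$
blows up. The key step, and the main obstacle, is therefore an Aubin--Nitsche duality argument that
recovers an extra factor $h^\alpha$ when the consistency error is measured in the weaker norm
$\|\cdot\|_{H_Y}$.

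Concretely, for $e:=\widetilde{y}_\varrho - y_\varrho$ I would represent $\|e\|_{H_Y}$ by duality:
for $g\in H_Y$ solve the adjoint problem $B^*\phi=g$ in $Y^*$, which is uniquely solvable because
$B^*:X\to Y^*$ is an isomorphism, and use that the pairing extends the inner product to write
\[
  \langle e, g\rangle_{H_Y}
  = \langle Be, \phi\rangle_{X^*,X}
  = \langle\widetilde{u}_{\varrho h}-u_\varrho, \phi\rangle_{X^*,X}.
\]
Then I would subtract $\Pi_h\phi$, split the pairing, and apply the Galerkin orthogonality to the
discrete part, obtaining
\[
  \langle e, g\rangle_{H_Y}
  = \langle\widetilde{u}_{\varrho h}-u_\varrho, \phi-\Pi_h\phi\rangle_{X^*,X}
  + \langle y_{\varrho h}-y_\varrho, g\rangle_{H_Y}
  - \langle B(y_{\varrho h}-y_\varrho), \phi-\Pi_h\phi\rangle_{X^*,X}.
\]
The two pairings against $\phi-\Pi_h\phi$ are controlled by the crucial assumption
\eqref{Eqn:AbstractOCP:Assumption Pi}, which converts $\|\phi-\Pi_h\phi\|_X$ into
$c\,h^\alpha\|B^*\phi\|_{H_Y}=c\,h^\alpha\|g\|_{H_Y}$, while the middle term is bounded simply by
$\|y_{\varrho h}-y_\varrho\|_{H_Y}\|g\|_{H_Y}$. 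Taking the supremum over $g\in H_Y$ then yields
\[
  \|e\|_{H_Y}
  \leq c\,\big[\, h^\alpha\|\widetilde{u}_{\varrho h}-u_\varrho\|_{X^*}
     + \|y_{\varrho h}-y_\varrho\|_{H_Y}
     + h^\alpha\|y_{\varrho h}-y_\varrho\|_Y \,\big].
\]

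Finally I would insert the discretization estimates from \eqref{Eqn:AbstractOCP:Error yrho} with
$\varrho=h^{2\alpha}$, namely $\|y_{\varrho h}-y_\varrho\|_{H_Y}\leq c\,h^\alpha\|y_\varrho\|_D$ and
$\|y_{\varrho h}-y_\varrho\|_Y\leq c\,\|y_{\varrho h}-y_\varrho\|_D\leq c\,\|y_\varrho\|_D$, together
with the available $\|\widetilde{u}_{\varrho h}-u_\varrho\|_{X^*}\leq c\,\|y_\varrho\|_D$, so that
every term carries one factor $h^\alpha\|y_\varrho\|_D$ and hence
$\|\widetilde{y}_\varrho - y_\varrho\|_{H_Y}\leq c\,h^\alpha\|y_\varrho\|_D$. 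For $\overline{y}\in Y$
this is $\leq c\,h^\alpha\|\overline{y}\|_D\leq c\,h^\alpha\|\overline{y}\|_Y$, giving
\eqref{Eqn:AbstractOCP:Final Error 2}; for $\overline{y}\in H_Y$ the blow-up
$\|y_\varrho\|_D\leq h^{-\alpha}\|\overline{y}\|_{H_Y}$ is exactly cancelled by the gained
$h^\alpha$, producing the uniform bound $\leq c\,\|\overline{y}\|_{H_Y}$ of
\eqref{Eqn:AbstractOCP:Final Error 1}. Combined with the regularization term from the first
paragraph this closes both cases. The one point that needs care is the bookkeeping in the duality
identity, i.e.\ ensuring that the Galerkin orthogonality is applied to the correct discrete quantity
and that only the $\phi-\Pi_h\phi$ contributions are touched by
\eqref{Eqn:AbstractOCP:Assumption Pi}.
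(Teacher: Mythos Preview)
Your proposal is correct and follows essentially the same route as the paper: an Aubin--Nitsche duality argument based on the adjoint problem $B^*\phi=g$, the splitting via $\Pi_h\phi$, the identity $\langle\widetilde{u}_{\varrho h}-u_\varrho,\Pi_h\phi\rangle=\langle B(y_{\varrho h}-y_\varrho),\Pi_h\phi\rangle$, and then the estimates \eqref{Eqn:AbstractOCP:Assumption Pi}, $\|u_\varrho-\widetilde{u}_{\varrho h}\|_{X^*}\leq c\|y_\varrho\|_D$, and \eqref{Eqn:AbstractOCP:Error yrho} to reach $\|\widetilde{y}_\varrho-y_\varrho\|_{H_Y}\leq c\,h^\alpha\|y_\varrho\|_D$. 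The only cosmetic difference is that the paper writes the splitting of $\langle B(y_{\varrho h}-y_\varrho),\Pi_h\phi\rangle$ out explicitly rather than presenting the three-term identity at once, and it records the intermediate bound $\|\widetilde{y}_\varrho-y_\varrho\|_{H_Y}\leq\|y_{\varrho h}-y_\varrho\|_{H_Y}+c\,h^\alpha\big[\|y_\varrho\|_D+\|y_\varrho-y_{\varrho h}\|_D\big]$ before inserting the regularity of $\overline{y}$.
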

\begin{proof}
  For any $\psi \in H_Y \subset Y^*$ we first define $x_\psi \in X$ as unique
  solution of the operator equation $B^* x_\psi = \psi$. With this we obtain
\begin{eqnarray*}
  \| \widetilde{y}_\varrho - y_\varrho \|_{H_Y}
     & = & \sup\limits_{0 \neq \psi \in H_Y}
     \frac{\langle \widetilde{y}_\varrho - y_\varrho , \psi \rangle_{H_Y}}
     {\| \psi \|_{H_Y}} \, = \, \sup\limits_{0 \neq \psi \in H_Y}
     \frac{\langle \widetilde{y}_\varrho - y_\varrho , B^* x_\psi \rangle_{H_Y}}
     {\| \psi \|_{H_Y}} \\
  & = & \sup\limits_{0 \neq \psi \in H_Y}
        \frac{\langle B \widetilde{y}_\varrho - B y_\varrho , x_\psi
        \rangle_{X^*,X}}
         {\| \psi \|_{H_Y}} \, = \,
         \sup\limits_{0 \neq \psi \in H_Y}
        \frac{\langle \widetilde{u}_{\varrho h} - u_\varrho , x_\psi
        \rangle_{X^*,X}}
         {\| \psi \|_{H_Y}} \\
  & = & \sup\limits_{0 \neq \psi \in H_Y}
        \frac{\langle \widetilde{u}_{\varrho h} - u_\varrho ,
        x_\psi - \Pi_h x_\psi \rangle_{X^*,X}
        +
        \langle \widetilde{u}_{\varrho h} - u_\varrho ,
        \Pi_h x_\psi \rangle_{X^*,X}}
        {\| \psi \|_{X^*,X}} \\
  & = & \sup\limits_{0 \neq \psi \in H_Y}
        \frac{\langle \widetilde{u}_{\varrho h} - u_\varrho ,
        x_\psi - \Pi_h x_\psi \rangle_{X^*,X}
        +
        \langle B (y_{\varrho h} - y_\varrho) , \Pi_h x_\psi \rangle_{X^*,X}}
        {\| \psi \|_{H_Y}} \, .
\end{eqnarray*}
With
\begin{eqnarray*}
  \langle \widetilde{u}_{\varrho h} - u_\varrho ,
  x_\psi - \Pi_h x_\psi \rangle_{X^*,X}
  & \leq &
  \| \widetilde{u}_{\varrho h} - u_\varrho \|_{X^*}
           \| x_\psi - \Pi_h x_\psi \|_X \\
  & \leq & c \, h^\alpha \, \| y_\varrho \|_D \| B^* x_\psi \|_{H_Y} \\
  & = & c \, h^\alpha \, \| y_\varrho \|_D \| \psi \|_{H_Y} ,
\end{eqnarray*}
and
\begin{eqnarray*}
  && \langle B (y_{\varrho h} - y_\varrho) , \Pi_h x_\psi \rangle_{X^*,X} \\
  && \hspace*{1cm}  = \, \langle B (y_{\varrho h} - y_\varrho) ,
     \Pi_h x_\psi - x_\psi \rangle_{X^*,X}
     + \langle B (y_{\varrho h} - y_\varrho) , x_\psi \rangle_{X^*,X} \\
  && \hspace*{1cm} \leq \, \| B (y_{\varrho h} - y_\varrho) \|_{X^*}
           \| \Pi_h x_\psi - x_\psi \|_X +
           \langle y_{\varrho h} - y_\varrho , B^* x_\psi \rangle_{H_Y} \\
  && \hspace*{1cm} \leq \, c_2^B \, \| y_{\varrho h} - y_\varrho \|_Y
     c \, h^\alpha \| B^* x_\psi \|_{H_Y} +
     \| y_{\varrho h} - y_\varrho \|_{H_Y}
     \| \psi \|_{H_Y} \\
  && \hspace*{1cm} = \, c \, h^\alpha \, \| y_{\varrho h} - y_\varrho \|_Y
     \| \psi \|_{H_Y} +
     \| y_{\varrho h} - y_\varrho \|_{H_Y}
     \| \psi \|_{H_Y}
\end{eqnarray*}
we conclude
\[
  \| \widetilde{y}_\varrho - y_\varrho \|_{H_Y} \leq
  \| y_{\varrho h} - y_\varrho \|_{H_Y} +
  c \, h^\alpha \, \Big[
  \| y_\varrho \|_D + \| y_\varrho - y_{\varrho h} \|_D
  \Big] \, ,
\]
and by the triangle inequality we have
\[
  \| \widetilde{y}_\varrho - \overline{y} \|_{H_Y} \leq
  \| y_\varrho - \overline{y} \|_{H_Y} +
  \| \widetilde{y}_\varrho - y_\varrho \|_{H_Y} .
\]
For $\overline{y} \in H_Y$ we finally conclude
\[
  \| \widetilde{y}_\varrho - y_\varrho \|_{H_Y} \leq
  c \, \Big( 1 + h^\alpha \, \varrho^{-1/2} \Big) \, \| \overline{y} \|_{H_Y} ,
\]
while for $\overline{y} \in Y$ we have
\[
  \| \widetilde{y}_\varrho - y_\varrho \|_{H_Y} \leq
  c \, \Big( h^\alpha + h^{2\alpha} \, \varrho^{-1/2} \Big)
  \, \| \overline{y} \|_D .
\]
Now the assertion follows for $\varrho = h^{2\alpha}$.
\end{proof}

\noindent
Using \eqref{Eqn:AbstractOCP:Final Error 1} and
\eqref{Eqn:AbstractOCP:Final Error 2}, and as
in \eqref{Eqn:AbstractOCP:Error Interpolation} we can use
space interpolation arguments to derive the final result of this
subsection.

\begin{theorem}
  Let the discrete state
  $y_{\varrho h} \in Y_h$ be the unique finite element solution of the
  variational formulation \eqref{Eqn:AbstractOCP:GalerkinVF} with
  $\varrho = h^{2\alpha}$, where we assume 
  $\overline{y} \in [H_Y,Y]_{|s}$ for some $s \in [0,1]$.
  Let the discrete control
  $\widetilde{u}_{\varrho h} \in U_h$ be the unique solution
  of \eqref{Eqn:AbstractOCP:Control VF}.
  For the resulting state
  $\widetilde{y}_\varrho = B \widetilde{u}_{\varrho h} \in Y$ we then
  have the error estimate
  \begin{equation}\label{Eqn:AbstractOCP:Final Error 3}
  \| \widetilde{y}_\varrho - \overline{y} \|_{H_Y} \, \leq \,
  c \, h^{\alpha s} \, \| \overline{y} \|_{[H_Y,Y]_{|s}} .
\end{equation}
\end{theorem}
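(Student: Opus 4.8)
The plan is to exploit the linearity of the entire solve-and-recover procedure and then to interpolate between the two endpoint estimates \eqref{Eqn:AbstractOCP:Final Error 1} and \eqref{Eqn:AbstractOCP:Final Error 2} already furnished by the preceding lemma, in exact analogy to the passage from \eqref{Eqn:AbstractOCP:Final Error 1}--\eqref{Eqn:AbstractOCP:Final Error 2} (respectively \eqref{Eqn:AbstractOCP:Error target}) to the interpolated estimate \eqref{Eqn:AbstractOCP:Error Interpolation}. First I would fix $h$ together with $\varrho = h^{2\alpha}$ and record that the map $\overline{y} \mapsto \widetilde{y}_\varrho$ is linear: the right-hand side of the Galerkin problem \eqref{Eqn:AbstractOCP:GalerkinVF} depends linearly on $\overline{y}$, so $\overline{y} \mapsto y_{\varrho h}$ is linear; the recovery problem \eqref{Eqn:AbstractOCP:Control VF} depends linearly on $y_{\varrho h}$, so $y_{\varrho h} \mapsto \widetilde{u}_{\varrho h}$ is linear; and $\widetilde{y}_\varrho = B^{-1} \widetilde{u}_{\varrho h}$ is linear in $\widetilde{u}_{\varrho h}$ since $B$ is an isomorphism. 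Hence the error operator $E_h : \overline{y} \mapsto \widetilde{y}_\varrho - \overline{y}$ is a well-defined bounded linear operator.

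The key step is to read the two endpoint operator norms directly off the lemma. Estimate \eqref{Eqn:AbstractOCP:Final Error 1} states that $\| E_h \|_{H_Y \to H_Y} \leq c$, while \eqref{Eqn:AbstractOCP:Final Error 2} states that $\| E_h \|_{Y \to H_Y} \leq c \, h^{\alpha}$, both with constants independent of $h$. Since the target space $H_Y$ is common to both bounds, applying the interpolation theorem for linear operators to the couple $(H_Y, Y)$ at parameter $s \in [0,1]$ yields, for every $\overline{y} \in [H_Y,Y]_{|s}$,
\[
  \| E_h \overline{y} \|_{H_Y} \leq C \,
  \big( \| E_h \|_{H_Y \to H_Y} \big)^{1-s}
  \big( \| E_h \|_{Y \to H_Y} \big)^{s}
  \, \| \overline{y} \|_{[H_Y,Y]_{|s}}
  \leq C \, c^{1-s} \, (c \, h^{\alpha})^{s} \,
  \| \overline{y} \|_{[H_Y,Y]_{|s}} ,
\]
which collapses to $c' \, h^{\alpha s} \, \| \overline{y} \|_{[H_Y,Y]_{|s}}$ and is precisely the asserted estimate \eqref{Eqn:AbstractOCP:Final Error 3}.

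The only point that genuinely requires care is that the multiplicative interpolation constant $C$ be independent of $h$; this holds because $C$ depends only on the interpolation functor and on $s$, and not on the particular operator $E_h$ nor on $h$. Consequently the two $h$-independent endpoint constants and the two powers of $h$ combine cleanly, the latter via $(h^{\alpha})^{s} = h^{\alpha s}$, so no loss of rate occurs. I do not anticipate any real obstacle here: the argument is a verbatim repetition of the interpolation step already carried out for the Galerkin state in \eqref{Eqn:AbstractOCP:Error Interpolation}, now applied to the reconstructed state $\widetilde{y}_\varrho$ in place of $y_{\varrho h}$, with the endpoint inputs supplied by the immediately preceding lemma.
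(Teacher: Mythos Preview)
Your proposal is correct and follows exactly the approach the paper indicates: the paper itself merely states that one uses \eqref{Eqn:AbstractOCP:Final Error 1} and \eqref{Eqn:AbstractOCP:Final Error 2} together with ``space interpolation arguments'' as in \eqref{Eqn:AbstractOCP:Error Interpolation}, and you have spelled out precisely that interpolation step, including the linearity of the solve-and-recover map that makes operator interpolation applicable. Your write-up is in fact more detailed than the paper's own sketch.
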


\noindent
The former estimate relates the resulting state to the target, showing
the accuracy of the method. In addition it is important to control the
costs. Therefore we need to have a computable
bound for $\norm{ \widetilde{u}_{\varrho h}}_{X^\ast}$.  

\begin{lemma}\label{Lem:AbstractOCP:bound for the cost via state}
  Let $\widetilde{u}_{\varrho h}\in U_h$ be the unique solution of
  \eqref{Eqn:AbstractOCP:Control VF}, and let
  \eqref{Eqn:AbstractOCP:Control inf-sup} hold true. Then
  \begin{equation*}
    \norm{\widetilde{u}_{\varrho h}}_{X^\ast} \leq c \, \norm{y_{\varrho h}}_{D}. 
  \end{equation*}  
  If, in addition, the discrete inf-sup stability
  \begin{equation}\label{Eqn:AbstractOCP:discrete inf sup Y}
    \widetilde{c}_S \, \norm{y_h}_D \leq
    \sup_{0\neq x_h\in X_h}
    \frac{\skpr{By_{h},x_h}_{X^\ast,X}}{\norm{x_h}_X},\quad \forall y_h\in Y_h, 
  \end{equation} 
  holds, then 
  \begin{equation*}
    \norm{y_{\varrho h}}_D \leq
    \widetilde{c}_S \, \norm{\widetilde{u}_{\varrho h}}_{X^\ast}. 
  \end{equation*} 
\end{lemma}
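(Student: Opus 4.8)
The plan is to derive both inequalities directly from the Galerkin--Petrov relation \eqref{Eqn:AbstractOCP:Control VF}, combining it with the respective discrete inf-sup conditions, the Cauchy--Schwarz inequality in the duality pairing $\langle \cdot , \cdot \rangle_{X^*,X}$, and the chain of norm equivalences already established in Subsection~\ref{Subsection:AbstractOCP:Setting}. No regularity of the target $\overline{y}$ enters here; everything is purely algebraic on the discrete level.

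For the first estimate I would start from the discrete inf-sup stability \eqref{Eqn:AbstractOCP:Control inf-sup} applied to $u_h = \widetilde{u}_{\varrho h} \in U_h$, giving
\[
  c_S \, \| \widetilde{u}_{\varrho h} \|_{X^*}
  \leq \sup\limits_{0 \neq x_h \in X_h}
  \frac{\langle \widetilde{u}_{\varrho h} , x_h \rangle_{X^*,X}}{\| x_h \|_X} .
\]
Next I would replace the numerator using the defining equation \eqref{Eqn:AbstractOCP:Control VF}, so that $\langle \widetilde{u}_{\varrho h} , x_h \rangle_{X^*,X} = \langle B y_{\varrho h} , x_h \rangle_{X^*,X}$, and bound it by Cauchy--Schwarz to obtain $\| B y_{\varrho h} \|_{X^*}$. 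It then remains to pass from $\| \cdot \|_{X^*}$ to $\| \cdot \|_D$: using the equivalence of $\| \cdot \|_{X^*}$ and $\| \cdot \|_{A^{-1}}$, then the identity $\| B y_{\varrho h} \|_{A^{-1}} = \| y_{\varrho h} \|_S$ from \eqref{Eqn:AbstractOCP:Norm equivalence}, and finally the equivalence $\| y_{\varrho h} \|_S \leq (c_1^D)^{-1} \, \| y_{\varrho h} \|_D$. Collecting the constants yields $\| \widetilde{u}_{\varrho h} \|_{X^*} \leq c \, \| y_{\varrho h} \|_D$. This is essentially the computation already carried out in the displayed estimate preceding the lemma, now applied to $\widetilde{u}_{\varrho h}$ itself rather than to the difference $u_{\varrho h} - \widetilde{u}_{\varrho h}$.

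For the converse estimate I would apply the additional discrete inf-sup condition \eqref{Eqn:AbstractOCP:discrete inf sup Y} with the particular choice $y_h = y_{\varrho h} \in Y_h$,
\[
  \widetilde{c}_S \, \| y_{\varrho h} \|_D
  \leq \sup\limits_{0 \neq x_h \in X_h}
  \frac{\langle B y_{\varrho h} , x_h \rangle_{X^*,X}}{\| x_h \|_X} ,
\]
and again invoke \eqref{Eqn:AbstractOCP:Control VF} to rewrite the numerator as $\langle \widetilde{u}_{\varrho h} , x_h \rangle_{X^*,X}$. A single Cauchy--Schwarz step then bounds the supremum by $\| \widetilde{u}_{\varrho h} \|_{X^*}$, so that $\| y_{\varrho h} \|_D \leq \widetilde{c}_S^{-1} \, \| \widetilde{u}_{\varrho h} \|_{X^*}$.

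Neither direction presents a genuine obstacle: both are immediate consequences of \eqref{Eqn:AbstractOCP:Control VF} together with the relevant inf-sup condition. The only point requiring a little care is the first bound, where one must route $\| B y_{\varrho h} \|_{X^*}$ correctly through the equivalences $\| \cdot \|_{X^*} \sim \| \cdot \|_{A^{-1}}$, the key identity \eqref{Eqn:AbstractOCP:Norm equivalence}, and $\| \cdot \|_S \sim \| \cdot \|_D$, so that the final constant $c$ depends only on $c_S$, $c_1^D$, and the norm-equivalence constants associated with $A$ and $S$; the second bound comes out with the explicit reciprocal constant $\widetilde{c}_S^{-1}$.
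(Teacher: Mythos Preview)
Your proof is correct and follows exactly the same route as the paper: apply the relevant discrete inf-sup condition, substitute via \eqref{Eqn:AbstractOCP:Control VF}, and bound the resulting supremum, invoking the norm equivalences $\|\cdot\|_{X^*}\sim\|\cdot\|_{A^{-1}}$, \eqref{Eqn:AbstractOCP:Norm equivalence}, and $\|\cdot\|_S\sim\|\cdot\|_D$ for the first inequality. Your observation that the second bound actually yields the constant $\widetilde{c}_S^{-1}$ rather than $\widetilde{c}_S$ is consistent with what the paper's own proof produces.
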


\begin{proof}
  We compute, using \eqref{Eqn:AbstractOCP:Control inf-sup} and
  \eqref{Eqn:AbstractOCP:Control VF} 
  \begin{align*}
    c_S \, \norm{\widetilde{u}_{\varrho h}}_{X^\ast}
    \leq \sup_{0\neq x_h\in X_h}
    \frac{\skpr{\widetilde{u}_{\varrho h},x_h}_{X^\ast,X}}{\norm{x_h}_{X}}
    = \sup_{0\neq x_h\in X_h}
    \frac{\skpr{By_{\varrho h},x_h}_{X^\ast,X}}{\norm{x_h}_{X}} \leq
    c \, \norm{y_{\varrho h}}_D.  
  \end{align*}
  The second estimate follows the same lines, using
  \eqref{Eqn:AbstractOCP:discrete inf sup Y} and
  \eqref{Eqn:AbstractOCP:Control VF}, i.e., 
  \begin{align*}
    \widetilde{c}_S\norm{y_{\varrho h}}_D \leq \sup_{0\neq x_h\in X_h}
    \frac{\skpr{By_{\varrho h},x_h}_{X^\ast,X}}{\norm{x_h}_X} =
    \sup_{0\neq x_h\in X_h}
    \frac{\skpr{\widetilde{u}_{\varrho h},x_h}_{X^\ast,X}}{\norm{x_h}_X}
    \leq \norm{\widetilde{u}_{\varrho h}}_{X^\ast}. 
  \end{align*} 
\end{proof}

\noindent
The last question to be answered is: How well does
$\norm{\widetilde{u}_{\varrho h}}_{X^\ast}$ approximate the actual cost
$\norm{u_\varrho}_{X^\ast}$? To deliver a satisfactory response, let us
introduce the projection $Q_h:H_{X}\to X_h$ defined as 
\begin{equation*}
  \skpr{Q_hu,x_h}_{H_X}= \skpr{u,x_h}_{H_X},\quad \forall x_h\in X_h 
\end{equation*} 
and let us make the following assumptions:
\begin{enumerate}
\item[{\em i.}] $\text{dim}(U_h) = \text{dim}(X_h)$ and
  $Q_h:U_h\to X_h$ is uniformly bounded from below, i.e., 
  \begin{equation*}
    \norm{Q_h u_h}_{H_X} \geq c_Q \norm{u_h}_{H_X},\quad \forall u_h\in U_h.
  \end{equation*}
  Then, $Q_h$ admits a bounded inverse $Q_h^{-1}:X_h\to U_h$ with
  $\norm{Q_h^{-1}}\leq c_Q^{-1}$.  
\item[{\em ii.}] The projection operator $\Pi_h:X\to X_h$ satisfies 
  \begin{equation*}
    \norm{x-\Pi_h x}_{H_X} \, \leq \,
    c \, h^{2\alpha} \, \norm{x}_D,\quad \forall x\in X
    \quad \mbox{for some} \; \alpha > 0.
  \end{equation*}
\item[{\em iii.}] There holds an inverse inequality 
  \begin{equation}\label{eq:AbstractOCP:inverse inequality}
    \norm{x_h}_D \, \leq \, c_I \, h^{-\alpha} \,
    \norm{x_h}_{H_X},\quad \forall x_h\in X_h \quad
    \mbox{for some} \; \alpha > 0 .
  \end{equation}
\item[{\em iv.}] If $Dy_\varrho\in H_Y$ then $u_\varrho = By_\varrho \in H_X$
  and there holds the estimate 
  \begin{equation}\label{Eqn:AbstractOCP:estimate_Dy-u}
    \norm{Dy_\varrho}_{H_Y} \, \leq \, c \, \norm{u_\varrho}_{H_X}. 
  \end{equation}
\end{enumerate}
Then we can prove the following estimate. 

\begin{theorem}
  For arbitrary but fixed $\varrho >0$ let $u_\varrho = By_\varrho\in H_X$
  and let $\widetilde{u}_{\varrho h}\in U_h$ be the unique solution of
  \eqref{Eqn:AbstractOCP:Control VF}. Then,
  \begin{equation*}
    \norm{u_\varrho  - \widetilde{u}_{\varrho h}}_{X^\ast} \, \leq \,
    c \, h^\alpha \, \norm{u_\varrho}_{H_X}
    \quad \mbox{for some} \; \alpha > 0.
  \end{equation*}
\end{theorem}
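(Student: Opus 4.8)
The plan is to introduce a suitable $U_h$-approximation of $u_\varrho$, split the error by the triangle inequality, and then estimate the two contributions by entirely different means: the first by a duality (Aubin--Nitsche type) argument, the second by the discrete inf-sup condition \eqref{Eqn:AbstractOCP:Control inf-sup} combined with the state error estimate. Since $u_\varrho \in H_X$, its $H_X$-orthogonal projection $Q_h u_\varrho \in X_h$ is well defined, and by assumption~(i) I may set $\Phi_h := Q_h^{-1} Q_h u_\varrho \in U_h$, which satisfies $Q_h \Phi_h = Q_h u_\varrho$ and $\| \Phi_h \|_{H_X} \le c_Q^{-1} \| u_\varrho \|_{H_X}$. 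I then write
\[
  \| u_\varrho - \widetilde{u}_{\varrho h} \|_{X^*}
  \le \| u_\varrho - \Phi_h \|_{X^*}
  + \| \Phi_h - \widetilde{u}_{\varrho h} \|_{X^*}
\]
and treat the two terms separately.

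For the first term I would use duality in the Gelfand triple $X \subset H_X \subset X^*$. Writing $\| u_\varrho - \Phi_h \|_{X^*} = \sup_{0 \neq x \in X} \langle u_\varrho - \Phi_h , x \rangle_{X^*,X} / \| x \|_X$ and noting that both arguments lie in $H_X$, the pairing reduces to the $H_X$ inner product. Splitting $x = (x - \Pi_h x) + \Pi_h x$, the decisive observation is that $Q_h(u_\varrho - \Phi_h) = Q_h u_\varrho - Q_h \Phi_h = 0$, so that $\langle u_\varrho - \Phi_h , \Pi_h x \rangle_{H_X}$ vanishes because $\Pi_h x \in X_h$. Only $\langle u_\varrho - \Phi_h , x - \Pi_h x \rangle_{H_X}$ survives, and by Cauchy--Schwarz, the super-approximation property in assumption~(ii), and boundedness of $D$ (so $\| x \|_D \le c \| x \|_X$) this is bounded by $c \, h^{2\alpha} \| u_\varrho - \Phi_h \|_{H_X} \| x \|_X$. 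With $\| u_\varrho - \Phi_h \|_{H_X} \le (1 + c_Q^{-1}) \| u_\varrho \|_{H_X}$ I obtain $\| u_\varrho - \Phi_h \|_{X^*} \le c \, h^{2\alpha} \| u_\varrho \|_{H_X}$, i.e.\ a contribution of higher order than claimed.

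For the second term, $\Phi_h - \widetilde{u}_{\varrho h} \in U_h$, so the inf-sup condition \eqref{Eqn:AbstractOCP:Control inf-sup} applies. Using $\langle \Phi_h , x_h \rangle_{H_X} = \langle u_\varrho , x_h \rangle_{X^*,X} = \langle B y_\varrho , x_h \rangle_{X^*,X}$ for $x_h \in X_h$ together with the defining equation \eqref{Eqn:AbstractOCP:Control VF} yields the perturbed Galerkin orthogonality $\langle \Phi_h - \widetilde{u}_{\varrho h} , x_h \rangle_{X^*,X} = \langle B(y_\varrho - y_{\varrho h}) , x_h \rangle_{X^*,X}$, whence $c_S \| \Phi_h - \widetilde{u}_{\varrho h} \|_{X^*} \le \| B(y_\varrho - y_{\varrho h}) \|_{X^*} \le c_2^B \| y_\varrho - y_{\varrho h} \|_Y$. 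It remains to bound the state error at fixed $\varrho$. Here I would note that $\overline{y} \in H_Y$ forces $D y_\varrho = \varrho^{-1}(\overline{y} - y_\varrho) \in H_Y$ via the gradient equation \eqref{Eqn:AbstractOCP:Abstract VF}, so that the higher-order approximation property \eqref{Eqn:AbstractOCP:Approximation Assumption 2} becomes available; inserting $y_h = P_h y_\varrho$ into Cea's lemma \eqref{Eqn:AbstractOCP:AbstractCea} gives $\| y_\varrho - y_{\varrho h} \|_D^2 \le (\varrho^{-1} c_3^2 h^{4\alpha} + c_4^2 h^{2\alpha}) \| D y_\varrho \|_{H_Y}^2$, hence $\| y_\varrho - y_{\varrho h} \|_Y \le c \| y_\varrho - y_{\varrho h} \|_D \le c(\varrho) \, h^\alpha \| D y_\varrho \|_{H_Y}$, and assumption~(iv) converts $\| D y_\varrho \|_{H_Y}$ into $c \| u_\varrho \|_{H_X}$. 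Combining, the $h^\alpha$ term from the state error dominates the $h^{2\alpha}$ term from the projection, giving the claim.

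The hard part will be the first term: one must approximate the only implicitly defined Galerkin--Petrov control $\widetilde{u}_{\varrho h}$, whose sole structural handle is the inf-sup condition, and still extract an extra power of $h$. The key device is the identity $Q_h(u_\varrho - \Phi_h) = 0$, which hinges on the compatibility of $U_h$ and $X_h$ through $Q_h$ in assumption~(i) and annihilates the consistency term in the duality argument, leaving only the super-approximation term controlled by assumption~(ii). A secondary subtlety is that the state error must be analysed at fixed $\varrho$, so the constant necessarily depends on $\varrho$, and one genuinely needs the higher regularity $D y_\varrho \in H_Y$ rather than the generic estimate \eqref{Eqn:AbstractOCP:Approximation Assumption}, which alone would yield only boundedness and no rate in the $\| \cdot \|_D$-norm.
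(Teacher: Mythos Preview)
Your proof is correct and takes a genuinely different route from the paper's. The paper does \emph{not} introduce an intermediate element; instead it splits the duality pairing for $u_\varrho - \widetilde{u}_{\varrho h}$ directly via $x = (x-\Pi_h x) + \Pi_h x$. The $\Pi_h x$ contribution is handled exactly as in your second term, using \eqref{Eqn:AbstractOCP:Control VF} to reduce to $\langle B(y_\varrho - y_{\varrho h}),\Pi_h x\rangle$ and then \eqref{Eqn:AbstractOCP:auxiliary Cea}. The $(x-\Pi_h x)$ contribution, however, produces the factor $\|u_\varrho - \widetilde{u}_{\varrho h}\|_{H_X}$, and to close the argument the paper has to prove the discrete stability bound $\|\widetilde{u}_{\varrho h}\|_{H_X}\le c\|u_\varrho\|_{H_X}$. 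This is where the inverse inequality~(iii) enters: the paper first treats $U_h=X_h$ by testing \eqref{Eqn:AbstractOCP:Control VF} with $\widetilde{u}_{\varrho h}$ and absorbing $\|\widetilde{u}_{\varrho h}\|_D$ via~(iii), and then reduces the general case to this one through $\widehat{u}_{\varrho h}=Q_h\widetilde{u}_{\varrho h}$ and assumption~(i). Your insertion of $\Phi_h = Q_h^{-1}Q_h u_\varrho$ and the orthogonality $Q_h(u_\varrho-\Phi_h)=0$ short-circuit this detour: the $\Pi_h x$ term simply vanishes in the first part, so assumption~(iii) is never invoked and the case distinction disappears. Your argument is therefore more economical in its hypotheses, relying only on~(i),~(ii),~(iv), the inf-sup condition \eqref{Eqn:AbstractOCP:Control inf-sup}, and Cea's lemma; the paper's approach buys a slightly more explicit discrete $H_X$-stability of $\widetilde{u}_{\varrho h}$ as a by-product, at the price of the extra inverse inequality.
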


\begin{proof}
  By assumption {\em iv.}, $Dy_\varrho \in H_Y$, and using Cea's lemma
  \eqref{Eqn:AbstractOCP:AbstractCea} and
  \eqref{Eqn:AbstractOCP:estimate_Dy-u} we get 
  \begin{equation}\label{Eqn:AbstractOCP:auxiliary Cea}
    \norm{y_\varrho - y_{\varrho h}}_D \leq
    \left(c_4h^{2\alpha} + c_3 \frac{h^{4\alpha}}{\varrho}\right)^{1/2}
    \norm{Dy_\varrho}_{H_Y} \leq
    c \, h^\alpha \left(c_4+c_3\frac{h^{2\alpha}}{\varrho}\right)^{1/2}
    \norm{u_\varrho}_{H_X}. 
  \end{equation}
  Further, it holds that 
  \begin{align*}
    \norm{u_\varrho -\widetilde{u}_{\varrho h}}_{X^\ast}
    & = \sup_{0\neq x\in X}
      \frac{\skpr{u_\varrho-\widetilde{u}_{\varrho h},x}_{X^\ast,X}}{\norm{x}_X}\\
    &= \sup_{0\neq x\in X}\left(
      \frac{\skpr{u_\varrho-\widetilde{u}_{\varrho h},x-\Pi_h x}_{X^\ast,X}}
      {\norm{x}_X} +
      \frac{\skpr{u_\varrho-\widetilde{u}_{\varrho h},\Pi_h x}_{X^\ast,X}}
      {\norm{x}_X}\right).
  \end{align*}
  For the second term we can estimate,
  using \eqref{Eqn:AbstractOCP:auxiliary Cea},
  \begin{align*}
    \skpr{u_\varrho - \widetilde{u}_{\varrho h},\Pi_h x}_{X^\ast, X}
    & = \skpr{B(y_\varrho - y_{\varrho h}),\Pi_h x}_{X^\ast,X} \, \leq \,
      \norm{y_\varrho - y_{\varrho h}}_{D}\norm{\Pi_h x}_D\\
    & \leq c \, \norm{y_\varrho - y_{\varrho h}}_D\norm{x}_D
    \, \leq \, c \, h^\alpha \,
    \left(c_4+c_3\frac{h^{2\alpha}}{\varrho}\right)^{1/2}
    \norm{u_\varrho}_{H_X}\norm{x}_X . 
  \end{align*}
  For the first term we have 
  \[
    \skpr{u_\varrho -\widetilde{u}_{\varrho h},x-\Pi_hx}_{X^\ast,X}
    \leq \norm{u_\varrho - \widetilde{u}_{\varrho h}}_{H_X}\norm{x-\Pi_h x}_{H_X}
    \leq ch^\alpha\norm{u_\varrho -\widetilde{u}_{\varrho h}}_{H_X}\norm{x}_{X},
  \]
  and further 
  \[
    \norm{u_\varrho -\widetilde{u}_{\varrho h}}_{H_X}
    \leq
    c \, \Big(
    \norm{u_\varrho}_{H_X}+ \norm{\widetilde{u}_{\varrho h}}_{H_X}\Big). 
  \]
  So, it remains to bound
  \begin{equation*}
    \norm{\widetilde{u}_{\varrho h}}_{H_X} \leq c \, \norm{u_\varrho}_{H_X}.
  \end{equation*}
  Therefore, we first consider the case $U_h = X_h$ and estimate,
  using \eqref{Eqn:AbstractOCP:Control VF},
  \eqref{Eqn:AbstractOCP:auxiliary Cea} and the inverse inequality
  \eqref{eq:AbstractOCP:inverse inequality},
  \begin{align*}
    \norm{\widetilde{u}_{\varrho h}}_{H_X}^2
    & = \skpr{\widetilde{u}_{\varrho h},\widetilde{u}_{\varrho h}}_{H_X} \, = \,
      \skpr{By_{\varrho h},\widetilde{u}_{\varrho h}}_{H_X} \, = \,
      \skpr{B(y_{\varrho h} - y_\varrho),\widetilde{u}_{\varrho h}}_{H_X} +
      \skpr{u_\varrho,\widetilde{u}_{\varrho h}}_{H_X}\\
    & \leq \norm{y_\varrho - y_{\varrho h}}_{D}
      \norm{\widetilde{u}_{\varrho h}}_{D} +
      \norm{u_\varrho}_{H_X}\norm{\widetilde{u}_{\varrho h}}_{H_X}\\
    & \leq c \, h^\alpha \, \left(
      c_4+c_3\frac{h^{2\alpha}}{\varrho}\right)
      \norm{u_\varrho}_{H_X}c_Ih^{-\alpha}\norm{\widetilde{u}_{\varrho h}}_{H_X}
      + \norm{u_\varrho}_{H_X}\norm{\widetilde{u}_{\varrho h}}_{H_X} \\
    & \leq c \, \norm{u_\varrho}_{H_X}\norm{\widetilde{u}_{\varrho h}}_{H_X}.  
  \end{align*}
  Now, if $U_h\neq X_h$, we we define
  $\widehat{u}_{\varrho h} = Q_h\widetilde{u}_{\varrho h}\in X_h$, which satisfies
  \begin{equation*}
    \skpr{\widehat{u}_{\varrho h},x_h}_{H_X} =
    \skpr{Q_h\widetilde{u}_{\varrho h},x_h}_{H_X} =
    \skpr{\widetilde{u}_{\varrho h},x_h}_{X^\ast,X} =
    \skpr{By_{\varrho h},x_h}_{X^\ast,X},\, \forall x_h\in X_h.   
  \end{equation*}
  Recalling, that $Q_h:U_h\to X_h$ is boundedly invertible and
  $\widetilde{u}_{\varrho h} = Q_h^{-1}\widehat{u}_{\varrho h}$, we have
  \begin{align*}
    \norm{\widetilde{u}_{\varrho h}}_{H_X} =
    \norm{Q_h^{-1}\widehat{u}_{\varrho h}}_{H_X} \leq
    \norm{Q_h^{-1}}\norm{\widehat{u}_{\varrho h}}_{H_X} \leq
    c_Q^{-1} \, \norm{u_\varrho}_{H_X}.
  \end{align*}
  Thus, we can replace $\widetilde{u}_{\varrho h}$ by $\widehat{u}_{\varrho h}$
  in the above derivation, which finishes the proof. 
\end{proof}

%
%
%
%
\subsection{Solvers and their use in nested iteration}
\label{Subsection:AbstractOCP:Solver}
Once the basis is chosen, the Galerkin variational formulation
\eqref{Eqn:AbstractOCP:GalerkinVF} is equivalent to the following 
spd
linear system of algebraic equations: Find
${\bf y}_{\varrho h} = (y_1,\ldots,y_M)^\top \in \mathbb{R}^M$ solving the spd
system
\begin{equation}\label{Eqn:AbstractOCP:AlgebraicSystem4State}
  ({\mathbf M}_h + \varrho \, {\mathbf D}_h) {\mathbf y}_{\varrho h} =
  \overline{\mathbf y}_h,
\end{equation}
where
${\mathbf M}_h = (\langle \varphi_j , \varphi_i \rangle_{H_Y})_{i,j=1,\ldots,M}$
and 
${\mathbf D}_h = (\langle D \varphi_j , \varphi_i \rangle_{H_Y})_{i,j=1,\ldots,M}$ 
are $M \times M$ spd matrices, while the vector 
${\mathbf \overline{y}}_h =(\langle  \overline{y}, \varphi_i
\rangle_{H_Y})_{i=1,\ldots,M} \in \mathbb{R}^M $ is defined by the given target
$\overline{y} \in H_Y$. Thus, the solution
${\mathbf y}_{\varrho h} = (y_1,\ldots,y_M)^\top \in \mathbb{R}^M$ of
\eqref{Eqn:AbstractOCP:AlgebraicSystem4State} provides the coefficients for
the Galerkin solution 
$y_{\varrho h} = \sum_{j=1}^M y_j \varphi_j \in Y_h \subset Y$ of
\eqref{Eqn:AbstractOCP:GalerkinVF} via Galerkin's isomorphism
$y_{\varrho h} \leftrightarrow {\mathbf y}_{\varrho h}$.

Let us choose the optimal regularization parameter $\varrho = h^{2\alpha}$,
and let ${\mathbf C}_h$ be an asymptotically optimal spd preconditioner
for the spd system matrix ${\mathbf M}_h + \varrho \, {\mathbf D}_h$ of
\eqref{Eqn:AbstractOCP:AlgebraicSystem4State}, i.e. there are positive,
$h$ respectively $M_h$ independent spectral constants $c_1$ and $c_2$ such
that the  spectral equivalence inequalities
\begin{equation} \label{Eqn:AbstractOCP:SpectralEquivalenceInequalities}
  c_1 \, {\mathbf C}_h \le {\mathbf S}_{\varrho h} = {\mathbf M}_h +
  \varrho \, {\mathbf D}_h \le  c_2 \, {\mathbf C}_h
\end{equation}
hold, and the action ${\mathbf C}_h^{-1} {\mathbf r}_h$ is of asymptotically
optimal algebraic complexity $O(M_h)$, where the best spectral constants
$c_1$ and $c_2$ can be characterized by the minimal eigenvalue
$\lambda_{\min}({\mathbf C}_h^{-1}{\mathbf S}_{\varrho h})$ and the maximal
eigenvalue $\lambda_{\max}({\mathbf C}_h^{-1}{\mathbf S}_{\varrho h})$ 
of ${\mathbf C}_h^{-1}{\mathbf S}_{\varrho h}$, respectively. Then the
algebraic system \eqref{Eqn:AbstractOCP:AlgebraicSystem4State} 
can be solved by the pcg method 
to a given relative accuracy $\varepsilon \in (0,1)$ in the
${\mathbf S}_{\varrho h}$ energy norm with asymptotically optimal algebraic
complexity ${\mathcal{O}}(M_h)$ provided that the multiplication of the
system matrix ${\mathbf S}_{\varrho h}$ with a vector is of asymptotically
optimal complexity too. More precisely, after $n$ pcg iterations, we get
the iteration error estimate
\begin{equation} \label{Eqn:AbstractOCP:IterationErrorEstimate}
  \|{\mathbf y}_{\varrho h} - {\mathbf y}_{\varrho h}^n \|_{{\mathbf S}_{\varrho h}} 
  \le q_n \,
  \|{\mathbf y}_{\varrho h} - {\mathbf y}_{\varrho h}^0 \|_{{\mathbf S}_{\varrho h}}, 
\end{equation}
in the ${\mathbf S}_{\varrho h}$ energy norm 
$\|\cdot\|_{{\mathbf S}_{\varrho h}} = ({\mathbf S}_{\varrho h}\cdot,\cdot)^{1/2}$,
where ${\mathbf y}_{\varrho h}$, ${\mathbf y}_{\varrho h}^n$, and
${\mathbf y}_{\varrho h}^0$ denote the exact solution of
\eqref{Eqn:AbstractOCP:AlgebraicSystem4State}, the $n$th pcg iterate, and
the initial guess, respectively. The reduction factor $q_n$ after $n$ pcg
iterations is given by $q_n = 2 q^n/(1+q^{2n})$ with
$q = ((\text{cond}_2({\mathbf C}_h^{-1}{\mathbf S}_{\varrho h}))^{1/2}-1)/((\text{cond}_2({\mathbf C}_h^{-1}{\mathbf S}_{\varrho h}))^{1/2} + 1) < 1$
and $\text{cond}_2({\mathbf C}_h^{-1}{\mathbf S}_{\varrho h}) 
= \lambda_{\max}({\mathbf C}_h^{-1}{\mathbf S}_{\varrho h})/\lambda_{\min}({\mathbf C}_h^{-1}{\mathbf S}_{\varrho h})
\le c_2/c_1$. The proofs of these well-known results on pcg can be found 
in the standard literature; 
see, e.g., \cite[Chapter 10]{Hackbusch2016Book}, or \cite[Chapter 13]{Steinbach:2008Book}.
 
The Petrov--Galerkin scheme \eqref{Eqn:AbstractOCP:Control VF} allows us
to recover the control  $\widetilde{u}_{\varrho h} \in U_h$  from the computed
state $y_{\varrho h} \in Y_h$. Determining the solution 
$\widetilde{u}_{\varrho h} = \sum_{j=1}^N u_j \psi_j$ of the
Petrov--Galerkin scheme \eqref{Eqn:AbstractOCP:Control VF} is equivalent to
the solution of the following system of algebraic equations: 
Find the coefficient vector
$\widetilde{\mathbf u}_{\varrho h} = (u_1,\ldots,u_N)^\top \in \mathbb{R}^N$
such that
\begin{equation}\label{Eqn:AbstractOCP:AlgebraicSystem4Control}
  \overline{\mathbf M}_h \widetilde{\mathbf u}_{\varrho h} =
  {\mathbf B}_h{\mathbf y}_{\varrho h}
\end{equation}
where $\overline{\mathbf M}_h =
(\langle \psi_j , \phi_i \rangle_{H_X})_{i,j=1,\ldots,N}$ and 
${\mathbf B}_h = (\langle B \varphi_j , \phi_i \rangle_{H_X}
)_{i=1,\ldots,M;\,j=1,\ldots,N}$ are $N \times N$ and $M \times N$ matrices,
respectively. Due to the discrete inf-sup condition
\eqref{Eqn:AbstractOCP:Control inf-sup}, the $N \times N$ matrix
$\overline{\mathbf M}_h$ is always regular, but in general neither symmetric
nor positive definite. If we would choose
$U_h = X_h = \mbox{span} \{ \phi_k \}_{k=1}^N \subset X$, then
$\overline{\mathbf M}_h$ is spd as Gram matrix, but then the computed
control is in general too smooth. Nonetheless, in our application presented
in Section~\ref{Section:Application}, we can choose different spaces $U_h$
and $X_h$ such that the inf-sup condition
\eqref{Eqn:AbstractOCP:Control inf-sup} is satisfied
and $\overline{\mathbf M}_h$ is spd at the same time.
Then system \eqref{Eqn:AbstractOCP:AlgebraicSystem4Control} can efficiently be 
solved by pcg. 

In practice, these solvers should be used within a nested iteration procedure
on a sequence of refined finite dimensional spaces with growing dimensions
$M_1 <  \dots < M_\ell <  \dots < M_L$ respectively
$N_1 <  \dots < N_\ell <  \dots < N_L$  related to 
shrinking discretization parameters (mesh sizes) 
$h_1 >  \dots > h_\ell >  \dots > h_L$ such that 
$h_L$ goes to zero and  $M_L, N_L$ go to infinity 
as $L$ tends to infinity.
At some $h=h_\ell$, this nested iteration should produce
\begin{itemize}
\item a control $\widetilde{u}_{\varrho h}$ such that 
  $\|\widetilde{u}_{\varrho h}\|_U =
  \|\widetilde{u}_{\varrho h}\|_{A^{-1}} < c_\text{\tiny cost}$, and
 \item  the corresponding state 
   $\widetilde{y}_{\varrho} = B^{-1} \widetilde{u}_{\varrho h}$ 
   satisfying 
   $ \| \widetilde{y}_\varrho - \overline{y} \|_{H_Y} \, \leq
   \, \varepsilon \, \| \overline{y} \|_{[H_Y,Y]_{|s}}$
\end{itemize}
in asymptotically optimal arithmetical complexity ${\mathcal{O}}(M_h)$ 
with a given ``budget'' $c_\text{\tiny cost} > 0$ and given accuracy
$\varepsilon = 10^{-p} < 1$, where $\varrho=h^{2\alpha}$. We note that
the control $\widetilde{u}_{\varrho h}$ will be recovered from the computed
state ${y}_{\varrho h}$, and
$\widetilde{y}_{\varrho} = B^{-1} \widetilde{u}_{\varrho h}$ 
satisfies the estimate \eqref{Eqn:AbstractOCP:Final Error 3}.
So, for sufficiently small $h$, we have $c h^{\alpha s} \le \varepsilon$.
We further note that, in practice,  
${y}_{\varrho h} \leftrightarrow \mathbf{y}_{\varrho h}$ and 
$\widetilde{u}_{\varrho h} \leftrightarrow \widetilde{\mathbf u}_{\varrho h}$
will be computed by solving the algebraic systems
\eqref{Eqn:AbstractOCP:AlgebraicSystem4State} and
\eqref{Eqn:AbstractOCP:AlgebraicSystem4Control}, respectively. 

Algorithm~\ref{Algorithm1} summarizes the accuracy and cost controlled
nested iteration procedure described above. The subindex $\ell$ always
indicates the refinement level, i.e. ${\mathbf M}_\ell$ stands for
${\mathbf M}_{h_\ell}$, ${\mathbf K}_\ell$ for ${\mathbf K}_{\varrho,h_\ell}$
with $\varrho=h_\ell^{2\alpha}$ etc. For $\ell = 1$, systems
\eqref{Eqn:AbstractOCP:AlgebraicSystem4State} and
\eqref{Eqn:AbstractOCP:AlgebraicSystem4Control} can be solved directly 
as indicated in the comments at lines 7 and 8, but their iterative solution 
starting with zero initial guesses is also possible provided that appropriate
preconditioners ${\mathbf C}_1$ and $\overline{\mathbf C}_1$ are available.
Since good initial guesses are available for $\ell = 2,\cdots,L$, the
algebraic systems \eqref{Eqn:AbstractOCP:AlgebraicSystem4State} and
\eqref{Eqn:AbstractOCP:AlgebraicSystem4Control} should be solved by
preconditioned iterative methods with appropriate
preconditioners ${\mathbf C}_\ell$ and $\overline{\mathbf C}_\ell$.
In Subsection~\ref{Subsection:Application:EnergyRegularisation:NestedIteration},
we show that pcg can be used not only for solving
\eqref{Eqn:AbstractOCP:AlgebraicSystem4State} but also
\eqref{Eqn:AbstractOCP:AlgebraicSystem4Control} with simple diagonal 
preconditioners ${\mathbf C}_\ell$ and $\overline{\mathbf C}_\ell$ 
obtained from lumping the corresponding mass matrices 
${\mathbf M}_\ell$ and $\overline{\mathbf M}_\ell$. 

\begin{algorithm}[ht]
{
	
	\Indp
	\For{$\ell=1, \dots, L$}
	{
		Generate $\mathbf{M}_\ell$, $\mathbf{D}_\ell$, ${\overline{\mathbf y}}_\ell$ 
                                      \tcc*{\eqref{Eqn:AbstractOCP:AlgebraicSystem4State}}
        
        $\varrho_\ell \gets h_\ell^{2 \alpha}$ \tcc*{optimal regularization}
        
        $\mathbf{S}_\ell \gets \mathbf{M}_\ell + \varrho \mathbf{D}_\ell$ 
                                      \tcc*{\eqref{Eqn:AbstractOCP:AlgebraicSystem4State}}
		
		Generate $\overline{\mathbf M}_\ell$, $\mathbf{B}_\ell$
                                      \tcc*{\eqref{Eqn:AbstractOCP:AlgebraicSystem4Control}}
		
        \uIf{$\ell = 1$}
        {
		$\mathbf{y}_\ell \gets \mathbf{S}^{-1}_\ell {\overline{\mathbf y}}_\ell$
                                      \tcc*{solve \eqref{Eqn:AbstractOCP:AlgebraicSystem4State} directly}
                                      
         $\widetilde{\mathbf u}_\ell \gets \overline{\mathbf M}_\ell^{-1} {\mathbf B}_\ell \mathbf{y}_\ell$
                                      \tcc*{solve \eqref{Eqn:AbstractOCP:AlgebraicSystem4Control} directly}
         }
         \Else{
         $\mathbf{y}_\ell  \gets \mathbf{I}_{\ell-1}^\ell\mathbf{y}_{\ell-1}$ 
                                      \tcc*{prolongation of the state}
                                      \tcc*{as initial guess for the iteration}
                                      
         $\mathbf{y}_\ell \gets \mathbf{S}^{-1}_\ell {\overline{\mathbf y}}_\ell$
                                      \tcc*{solve \eqref{Eqn:AbstractOCP:AlgebraicSystem4State} iteratively} 
                                      
         $\widetilde{\mathbf u}_\ell  \gets \mathbf{I}_{\ell-1}^\ell \widetilde{\mathbf u}_{\ell-1}$ 
                                      \tcc*{prolongation of the control}
                                      \tcc*{as initial guess for the iteration}
                                      
         $\widetilde{\mathbf u}_\ell \gets \overline{\mathbf M}_\ell^{-1} {\mathbf B}_\ell \mathbf{y}_\ell$
                                      \tcc*{solve \eqref{Eqn:AbstractOCP:AlgebraicSystem4Control}  iteratively}                                                          
		 }
		 
		 $e_\ell \gets \|\mathbf{y}_\ell -{\overline{\mathbf y}}_\ell\|_{{\mathbf M}_\ell} =\|{y}_\ell -{\overline{y}}_\ell\|_{H_Y}$ \tcc*{discretization error}
		 
		 $c_\ell = \|\widetilde{u}_\ell\|_U^2$ \tcc*{energy cost of the control}
        
        \If{$c_\ell >  c_\text{\tiny cost}$}
         {
         {\bf STOP} and \Return{$c_\ell, \widetilde{\mathbf u}_\ell$, $e_\ell, \mathbf{y}_\ell$}
                    \tcc*{cost test}
         }
        
         \If{$e_\ell \le  \varepsilon \, \|{\overline{\mathbf y}}_\ell\|_{H_Y}$}
         {
         {\bf STOP} and \Return{$c_\ell, \widetilde{\mathbf u}_\ell$, $e_\ell, \mathbf{y}_\ell$}
                    \tcc*{accuracy test}
         }

    }
	
	\Return{$c_L, \mathbf{y}_L$, $e_L, \mathbf{y}_L$}
	
	\caption{Accuracy and cost controlled nested iteration.} 
	\label{Algorithm1}
}
\end{algorithm}

\begin{remark}
  The energy cost $\|\widetilde{u}_\ell\|_{U=X^*}^2 =
  \|\widetilde{u}_\ell\|_{A^{-1}}^2  =
  \langle A^{-1} \widetilde{u}_\ell, \widetilde{u}_\ell\rangle_{X,X^*}$ 
  in Line 16 of Algorithm~\ref{Algorithm1} is in general not computable
  exactly, but we can efficiently compute a good upper bound as follows:
  \begin{equation}\label{Eqn:AbstractOCP:ComputableControlCost}
    \|\widetilde{u}_\ell\|_U^2 =
    \langle w, \widetilde{u}_\ell\rangle_{X,X^*} =
    \langle w, \widetilde{u}_\ell\rangle_{H_X} \le
    \|w\|_{H_X}\|\widetilde{u}_\ell\|_{H_X} \le
    c_F^2 \|\widetilde{u}_\ell\|_{H_X}^2,
  \end{equation}
  where we used that $w = A^{-1} \widetilde{u}_\ell \in X$ solves the
  variational equation
  \begin{equation*}
    \langle A w, v \rangle_{X^*,X} =
    \langle \widetilde{u}_\ell, v\rangle_{X^*,X} =
    \langle \widetilde{u}_\ell,v\rangle_{H_X} \; \forall v \in X \subset H_X,  
  \end{equation*}
  $ \|w\|_{H_X} \le c_F  \|w\|_A$ (abstract Friedrichs' inequality),
  and $\|w\|_A \le c_F \|\widetilde{u}_\ell\|_{H_X}$. Thus, we can replace 
  $c_\ell = \|\widetilde{u}_\ell\|_U^2$ by the easily computable cost
  $c_\ell = \|\widetilde{u}_\ell\|_{H_X}^2 =
  (\widehat{\mathbf{M}}_\ell \widetilde{\mathbf{u}}_\ell,
  \widetilde{\mathbf{u}}_\ell )$ in Line 16, and
  $c_\ell > c_\text{\tiny cost}$ by $c_\ell > c_\text{\tiny cost} / c_F^2$
 in Line 17, where $\widehat{\mathbf{M}}_\ell $ denotes the spd mass
  matrix in $H_X$. In particular, for the $H_X$-regularization ($A = I)$ ,
  which corresponds to the $L^2$-regularization in the applications,
  the cost $c_\ell = \|\widetilde{u}_\ell\|_{U=H_X}^2 = (\widehat{\mathbf{M}}_\ell \widetilde{\mathbf{u}}_\ell,
  \widetilde{\mathbf{u}}_\ell)$ of the control can 
  be calculated  directly. 
  We note that we can also use the bound $\norm{y_{\varrho h}}_{D}$
  from Lemma~\ref{Lem:AbstractOCP:bound for the cost via state}
  without recovering the control $\widetilde{u}_{\varrho h}$ at the nested 
  levels.
\end{remark}

\subsection{Constraints}
\label{Subsection:AbstractOCP:Constraints}
To include additional constraints, e.g., on the control $u_\varrho$ or on
the state $y_\varrho$, we now consider the minimization of the reduced cost
functional \eqref{Eqn:AbstractOCP:AbstractFunctionalD} over a non-empty,
convex and closed subset $K \subset Y$, where we assume $0 \in K$ to
be satisfied. The minimizer $y_\varrho \in K$ satisfying
\[
  \check{\mathcal{J}}(y_\varrho) = \min\limits_{y \in K}
  \check{\mathcal{J}}(y) 
\]
is determined as the unique solution $y_\varrho \in K$ of the first
kind variational inequality
\begin{equation}\label{Abstract VI}
  \langle y_\varrho , y - y_\varrho \rangle_{H_Y} +
  \varrho \, \langle D y_\varrho , y - y_\varrho \rangle_{Y^*,Y} \geq
  \langle \overline{y} , y - y_\varrho \rangle_{H_Y} \quad
  \mbox{for all} \; y \in K .
\end{equation}
As in Lemma \ref{Lemma:AbstractOCP:RegularizationErrorUnconstrained}
 we can state the following result on the error
$\| y_\varrho - \overline{y} \|_{H_Y}$. 

\begin{lemma}\label{Lemma regularization error state constraint}
  Let $y_\varrho \in K$ be the unique solution of the variational inequality
  \eqref{Abstract VI}. For $\overline{y} \in H_Y$ there holds
  \[
    \| y_\varrho - \overline{y} \|_{H_Y} \leq
    \| \overline{y} \|_{H_Y}, \quad
    \| y_\varrho \|_D \leq \varrho^{-1/2} \, \| \overline{y} \|_{H_Y} ,
  \]
  while for $\overline{y} \in K$ we have
    \[
    \| y_\varrho - \overline{y} \|_S \leq \| \overline{y} \|_D, \quad
    \| y_\varrho - \overline{y} \|_{H_Y} \leq \varrho^{1/2} \,
    \| \overline{y} \|_D , \quad
    \| y_\varrho \|_D \leq \| \overline{y} \|_D .
  \]
  If in addition $D \overline{y} \in H_Y$ is satisfied for
  $\overline{y} \in K$, then the estimates 
  \[
    \| y_\varrho - \overline{y} \|_{H_Y} \leq \varrho \,
    \| D \overline{y} \|_{H_Y} \quad \mbox{and} \quad
    \| y_\varrho - \overline{y} \|_D \leq
    \varrho^{1/2} \, \| D \overline{y} \|_{H_Y} 
  \]
  follows.
\end{lemma}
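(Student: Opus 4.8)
The plan is to follow the proof of Lemma~\ref{Lemma:AbstractOCP:RegularizationErrorUnconstrained} step for step, the one essential change being that \eqref{Abstract VI} is only an inequality and every test function must be admissible, i.e.\ lie in $K$. All estimates will come from just two admissible choices: $y=0\in K$ (available since we assume $0\in K$) for the case $\overline{y}\in H_Y$, and $y=\overline{y}\in K$ for the case $\overline{y}\in K$. The equalities exploited in the unconstrained proof (obtained there by testing with $y=y_\varrho$ or $y=\overline{y}-y_\varrho$) are thereby replaced by one-sided inequalities, but each such inequality is still of the form ``a quantity is bounded by the Cauchy--Schwarz product of the same quantity with the data,'' so the conclusions survive unchanged.

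For $\overline{y}\in H_Y$ I would insert $y=0$ into \eqref{Abstract VI}; after reversing signs this gives
\[
  \|y_\varrho\|_{H_Y}^2+\varrho\,\|y_\varrho\|_D^2\leq\langle\overline{y},y_\varrho\rangle_{H_Y}\leq\|\overline{y}\|_{H_Y}\,\|y_\varrho\|_{H_Y},
\]
from which $\|y_\varrho\|_{H_Y}\leq\|\overline{y}\|_{H_Y}$ and $\|y_\varrho\|_D\leq\varrho^{-1/2}\,\|\overline{y}\|_{H_Y}$ are immediate. For the target distance I expand $\|y_\varrho-\overline{y}\|_{H_Y}^2=\|y_\varrho\|_{H_Y}^2-2\langle y_\varrho,\overline{y}\rangle_{H_Y}+\|\overline{y}\|_{H_Y}^2$ and substitute the bound $\|y_\varrho\|_{H_Y}^2\leq\langle\overline{y},y_\varrho\rangle_{H_Y}$; the leftover term $-\langle y_\varrho,\overline{y}\rangle_{H_Y}$ is nonpositive, because the very same inequality forces $\langle\overline{y},y_\varrho\rangle_{H_Y}\geq\|y_\varrho\|_{H_Y}^2\geq0$, and hence $\|y_\varrho-\overline{y}\|_{H_Y}\leq\|\overline{y}\|_{H_Y}$.

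For $\overline{y}\in K$ I would test with $y=\overline{y}$. Using $\langle y_\varrho,\overline{y}-y_\varrho\rangle_{H_Y}=-\|y_\varrho-\overline{y}\|_{H_Y}^2+\langle\overline{y},\overline{y}-y_\varrho\rangle_{H_Y}$, the $H_Y$ terms in \eqref{Abstract VI} collapse and leave
\[
  \|y_\varrho-\overline{y}\|_{H_Y}^2\leq\varrho\,\langle D y_\varrho,\overline{y}-y_\varrho\rangle_{Y^*,Y}.
\]
The decisive step is the linear splitting $\langle Dy_\varrho,\overline{y}-y_\varrho\rangle=\langle D\overline{y},\overline{y}-y_\varrho\rangle-\|\overline{y}-y_\varrho\|_D^2$, which rewrites this as
\[
  \|y_\varrho-\overline{y}\|_{H_Y}^2+\varrho\,\|y_\varrho-\overline{y}\|_D^2\leq\varrho\,\langle D\overline{y},\overline{y}-y_\varrho\rangle_{Y^*,Y}.
\]
Bounding the right-hand side by $\varrho\,\|\overline{y}\|_D\,\|\overline{y}-y_\varrho\|_D$ via Cauchy--Schwarz in the $D$-inner product yields $\|y_\varrho-\overline{y}\|_D\leq\|\overline{y}\|_D$ and then $\|y_\varrho-\overline{y}\|_{H_Y}\leq\varrho^{1/2}\,\|\overline{y}\|_D$; the complementary splitting $\langle Dy_\varrho,\overline{y}-y_\varrho\rangle=\langle Dy_\varrho,\overline{y}\rangle-\|y_\varrho\|_D^2$ gives in the same way $\|y_\varrho\|_D\leq\|\overline{y}\|_D$. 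When in addition $D\overline{y}\in H_Y$, I keep the last display but bound its right-hand side instead by $\varrho\,\|D\overline{y}\|_{H_Y}\,\|\overline{y}-y_\varrho\|_{H_Y}$, using that the duality pairing reduces to the $H_Y$ inner product; this produces $\|y_\varrho-\overline{y}\|_{H_Y}\leq\varrho\,\|D\overline{y}\|_{H_Y}$, and re-inserting this into the $D$-term gives $\|y_\varrho-\overline{y}\|_D\leq\varrho^{1/2}\,\|D\overline{y}\|_{H_Y}$.

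The only genuine obstacle is the loss of equality: one cannot test \eqref{Abstract VI} with $y=y_\varrho$ (this returns the vacuous $0\geq0$) nor with $y=\overline{y}-y_\varrho$ as in the unconstrained case, so one must verify that the admissible choices $y=0$ and $y=\overline{y}$ suffice and that the needed sign information (here $\langle\overline{y},y_\varrho\rangle_{H_Y}\geq0$) is itself a by-product of the $y=0$ test rather than an extra hypothesis; beyond $0\in K$ and $\overline{y}\in K$, no geometric property of $K$ is used. Finally, I read the left-hand norm $\|y_\varrho-\overline{y}\|_S$ in the second group as $\|y_\varrho-\overline{y}\|_D$, which is what the computation delivers and what the analogous estimate in Lemma~\ref{Lemma:AbstractOCP:RegularizationErrorUnconstrained} asserts.
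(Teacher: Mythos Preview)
Your proof is correct and follows essentially the same route as the paper: both test \eqref{Abstract VI} only with $y=0$ and $y=\overline{y}$, and both arrive at the key inequality $\|y_\varrho-\overline{y}\|_{H_Y}^2+\varrho\,\|y_\varrho-\overline{y}\|_D^2\leq\varrho\,\langle D\overline{y},\overline{y}-y_\varrho\rangle$. The only cosmetic difference is that the paper obtains $\|y_\varrho-\overline{y}\|_{H_Y}\leq\|\overline{y}\|_{H_Y}$ directly from the rearrangement $\varrho\,\|y_\varrho\|_D^2+\|\overline{y}-y_\varrho\|_{H_Y}^2\leq\langle\overline{y}-y_\varrho,\overline{y}\rangle_{H_Y}$ of the $y=0$ test, whereas you take a short detour through the sign of $\langle\overline{y},y_\varrho\rangle_{H_Y}$; your reading of $\|\cdot\|_S$ as $\|\cdot\|_D$ is also what the paper's proof actually establishes.
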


\begin{proof}
  From the variational inequality \eqref{Abstract VI} we obviously have
  \[
    \varrho \, \langle D y_\varrho , y - y_\varrho \rangle_{Y^*,Y} \geq
    \langle \overline{y} - y_\varrho , y - y_\varrho \rangle_{H_Y} \quad
    \forall v \in K .
  \]
  In particular for $y=0 \in K$ this gives
  \[
    \varrho \, \langle D y_\varrho , y_\varrho \rangle_{Y^*,Y} +
    \| \overline{y} - y_\varrho \|^2_{H_Y} \leq
    \langle \overline{y} - y_\varrho , \overline{y} \rangle_{H_Y}
    \leq \| \overline{y} - y_\varrho \|_{H_Y}
    \| \overline{y} \|_{H_Y},
  \]
  i.e.,
  \[
    \| y_\varrho - \overline{y} \|_{H_Y} \leq
    \| \overline{y} \|_{H_Y}, \quad
    \| y_\varrho \|_D \leq \varrho^{-1/2} \, \| \overline{y} \|_{H_Y} .
  \]
  When assuming $\overline{y} \in K$ we can consider $y=\overline{y}$
  to obtain
  \[
    \| y_\varrho - \overline{y} \|^2_{H_Y}
    +
    \varrho \, \| y_\varrho - \overline{y} \|_D^2 
    \leq
    \varrho \, \langle D \overline{y} ,
    \overline{y} - y_\varrho \rangle_{Y,Y^*}
    \leq
    \varrho \, \|\overline{y} \|_D \| y_\varrho - \overline{y} \|_D,
  \]
  i.e.,
  \[
    \| y_\varrho - \overline{y} \|_D \leq \| \overline{y} \|_D, \quad
    \| y_\varrho - \overline{y} \|_{H_Y} \leq \varrho^{1/2} \,
    \| \overline{y} \|_D \, .
  \]
  For $y = \overline{y} \in K$ we can write \eqref{Abstract VI} also as
  \[
    \| y_\varrho - \overline{y} \|^2_{H_Y} +
    \varrho \, \| y_\varrho \|_D^2
    \leq
    \varrho \, \langle D y_\varrho , \overline{y} \rangle_{H_Y}
    \leq \varrho \, \| y_\varrho \|_D \| \overline{y} \|_D,
  \]
  i.e.,
  \[
    \| y_\varrho \|_D \leq \| \overline{y} \|_D .
  \]
  Finally, if $D \overline{y} \in H_Y$ for $\overline{y} \in K$, then
  \[
    \| y_\varrho - \overline{y} \|^2_{H_Y}
    +
    \varrho \, \| y_\varrho - \overline{y} \|_D^2 
    \leq
    \varrho \, \langle D \overline{y} ,
    \overline{y} - y_\varrho \rangle_{Y^*,Y} \leq
    \varrho \, \| D \overline{y} \|_{H_Y}
    \| y_\varrho - \overline{y} \|_{H_Y},
  \]
  implying
  \[
    \| y_\varrho - \overline{y} \|_{H_Y} \leq \varrho \,
    \| D \overline{y} \|_{H_Y}, \quad
    \| y_\varrho - \overline{y} \|_D \leq
    \varrho^{1/2} \, \| D \overline{y} \|_{H_Y} .
  \]
\end{proof}

\noindent
Note that the results of Lemma
\ref{Lemma regularization error state constraint} correspond to the
results of Lemma \ref{Lemma:AbstractOCP:RegularizationErrorUnconstrained}
when no constraints are considered.

As in the unconstrained case, let
$Y_h = \mbox{span} \{ \varphi_i \}_{i=1}^M \subset Y$
be a conforming ansatz space, and let $K_h \subset Y_h$ 
be some non-empty, convex and closed set being an appropriate
approximation of $K$. Then we consider the
Galerkin variational inequality of \eqref{Abstract VI} to find
$y_{\varrho h} \in K_h$ such that
\begin{equation}\label{Abstract VI Disk}
  \langle y_{\varrho h} , y_h - y_{\varrho h} \rangle_{H_Y} +
  \varrho \, \langle D y_{\varrho h} , y_h - y_{\varrho h} \rangle_{Y^*,Y} \geq
  \langle \overline{y} , y_h - y_{\varrho h} \rangle_{H_Y}
\end{equation}
is satisfied for all $y_h \in K_h$, which is obviously equivalent to
\[
  \langle \overline{y} - y_{\varrho h}, y_h - y_{\varrho h} \rangle_{H_Y}
  - 
  \varrho \, \langle D y_{\varrho h} , y_h - y_{\varrho h} \rangle_{Y^*,Y}
  \leq 0 \quad \mbox{for all} \; y_h \in K_h .
\]
Following \cite{Falk:1974}
we can prove the following a priori error estimate:

\begin{lemma}
  For $y_\varrho \in K$ and $y_{\varrho h} \in K_h$ being the unique
  solutions of the variational inequalities
  \eqref{Abstract VI} and \eqref{Abstract VI Disk}, respectively,
  there holds the error estimate
  \begin{eqnarray*}
    \| y_\varrho - y_{\varrho h} \|^2_{H_Y} +
    2 \, \varrho \, \| y_\varrho - y_{\varrho h} \|^2_D
    && \\
    && \hspace*{-5cm}
       \leq \, 2 \inf\limits_{y_h \in K_h} \Big[ 
       3 \, \| y_\varrho - y_h \|_{H_Y}^2 +
       \varrho \, \| y_\varrho - y_h \|^2_D \Big] +
       4 \, \| \varrho D y_\varrho + y_\varrho - \overline{y} \|_{H_Y}^2 .
  \end{eqnarray*}
\end{lemma}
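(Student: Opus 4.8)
The plan is to recast both variational inequalities as minimization problems for the symmetric, coercive bilinear form $a(u,v) := \langle u,v\rangle_{H_Y} + \varrho\,\langle Du,v\rangle_{Y^*,Y}$, so that $a(v,v) = \|v\|_{H_Y}^2 + \varrho\,\|v\|_D^2$ and $y_\varrho$ (respectively $y_{\varrho h}$) is the unique minimizer of the energy $J(v) := \tfrac12 a(v,v) - \langle\overline{y},v\rangle_{H_Y}$ over $K$ (respectively over $K_h$). The quantity appearing on the right-hand side is precisely the residual of the continuous problem: setting $r := \varrho\,Dy_\varrho + y_\varrho - \overline{y}\in H_Y$, one has $\langle r,v\rangle_{H_Y} = a(y_\varrho,v) - \langle\overline{y},v\rangle_{H_Y}$ for every $v\in Y$, and this representation is the device that lets the nonconforming consistency error enter the bound.

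First I would establish the energy identity $\tfrac12 a(e,e) = \big(J(y_{\varrho h}) - J(y_\varrho)\big) + \langle r,e\rangle_{H_Y}$ with $e := y_\varrho - y_{\varrho h}$, which follows by the expansion $\tfrac12 a(v,v)-\tfrac12 a(u,u) = \tfrac12 a(v-u,v-u) + a(u,v-u)$ together with the definition of $r$. Since $y_{\varrho h}$ minimizes $J$ over $K_h$, for every $y_h\in K_h$ we have $J(y_{\varrho h})\le J(y_h)$, and expanding $J(y_h)-J(y_\varrho) = \tfrac12 a(y_\varrho-y_h,y_\varrho-y_h) + \langle r,y_h-y_\varrho\rangle_{H_Y}$ gives
\begin{equation*}
  \tfrac12 a(e,e) \le \tfrac12 a(y_\varrho-y_h,y_\varrho-y_h) + \langle r,y_h-y_\varrho\rangle_{H_Y} + \langle r,e\rangle_{H_Y}.
\end{equation*}

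The crucial structural feature is that the $\varrho$-weighted $D$-contribution now appears only through the two pure square terms $\tfrac12 a(e,e)$ and $\tfrac12 a(y_\varrho-y_h,\cdot)$ and never in a cross term, so no Cauchy–Schwarz step ever touches $\|e\|_D$. I would therefore apply Cauchy–Schwarz and Young's inequality only to the two $H_Y$-pairings $\langle r,e\rangle_{H_Y}$ and $\langle r,y_h-y_\varrho\rangle_{H_Y}$, absorbing a fraction of $\|e\|_{H_Y}^2$ into the left-hand side while leaving $\tfrac\varrho2\|e\|_D^2$ intact. After absorption the $H_Y$-part carries coefficient $\tfrac14$ whereas the $D$-part still carries $\tfrac\varrho2$; multiplying through by $4$ to normalize the $\|e\|_{H_Y}^2$ coefficient to one then produces the factor $2$ in front of $\varrho\|y_\varrho-y_{\varrho h}\|_D^2$. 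Taking the infimum over $y_h\in K_h$ yields the stated estimate, the numerical constants $3$, $2$, $4$ being the outcome of the chosen Young weights.

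I expect the main obstacle to be the handling of the nonconformity $K_h\not\subset K$: since in general $y_{\varrho h}\notin K$ and $y_\varrho\notin K_h$, neither solution may be inserted as a test function in the other inequality, so the favorable sign $\langle r,e\rangle_{H_Y}\le 0$ (available only when $y_{\varrho h}\in K$) cannot be exploited, and this term must instead be controlled by Young's inequality — which is exactly what generates the consistency contribution $\|r\|_{H_Y}^2$. The secondary delicate point is the bookkeeping that keeps the $\varrho\|\cdot\|_D$ energy terms out of every Young step, since that is what yields the sharp factor $2$ on the left-hand side rather than the weaker coefficient $1$ one would get from a naive cross-term estimate of $a(e,\,y_\varrho-y_h)$.
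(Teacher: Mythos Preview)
Your approach is sound and is essentially the same Falk-type argument the paper uses: both proofs exploit the discrete variational inequality \eqref{Abstract VI Disk}, introduce the residual $r=\varrho Dy_\varrho+y_\varrho-\overline y$, and control the consistency term $\langle r,y_h-y_{\varrho h}\rangle_{Y^*,Y}$ by Young's inequality together with the splitting $y_h-y_{\varrho h}=(y_h-y_\varrho)+e$.

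The only slippage is in the constants. By passing through the minimization inequality $J(y_{\varrho h})\le J(y_h)$ rather than the discrete VI itself you discard the nonnegative term $\tfrac12 a(y_h-y_{\varrho h},y_h-y_{\varrho h})$, since
\[
J(y_h)-J(y_{\varrho h})=\tfrac12 a(y_h-y_{\varrho h},y_h-y_{\varrho h})+\big[a(y_{\varrho h},y_h-y_{\varrho h})-\langle\overline y,y_h-y_{\varrho h}\rangle_{H_Y}\big],
\]
and only the bracket is what the VI controls. The paper keeps the bracket and arrives at the sharper intermediate inequality $a(e,e)\le a(e,y_\varrho-y_h)+\langle r,y_h-y_{\varrho h}\rangle_{Y^*,Y}$; the retained cross term $a(e,y_\varrho-y_h)$ can then be split asymmetrically ($\|e\|_{H_Y}\|y_\varrho-y_h\|_{H_Y}\le\tfrac14\|e\|_{H_Y}^2+\|y_\varrho-y_h\|_{H_Y}^2$) to reach exactly the constants $3,2,4$. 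From your intermediate inequality the factor $2$ on the left forces the Young weight $\tfrac14$ on $\|e\|_{H_Y}^2$ in the bound for $\langle r,e\rangle$, and then any admissible split of $\langle r,y_h-y_\varrho\rangle$ yields, after multiplying by $4$, a coefficient of at least $5$ in front of $\|r\|_{H_Y}^2$. So either record the constant $5$ rather than $4$, or replace the minimization step by the discrete VI directly to recover the paper's constants.
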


\begin{proof}
  For arbitrary $y_h \in K_h$,
  \begin{eqnarray*}
    \| y_\varrho - y_{\varrho h} \|^2_{H_Y} +
    \varrho \, \| y_\varrho - y_{\varrho h} \|^2_D
    && \\
    && \hspace*{-5cm}
       = \, \langle y_\varrho - y_{\varrho h} , y_\varrho - y_{\varrho h}
       \rangle_{H_Y} + \varrho \, \langle D (y_\varrho - y_{\varrho h}),
       y_\varrho - y_{\varrho h} \rangle_{Y^*,Y} \\
    && \hspace*{-5cm}
       = \, \langle y_\varrho - y_{\varrho h} , y_\varrho - y_h
       \rangle_{H_Y} + \varrho \, \langle D (y_\varrho - y_{\varrho h}),
       y_\varrho - y_h \rangle_{Y^*,Y} \\
    && \hspace*{-4cm}
       + \langle y_\varrho - y_{\varrho h} , y_h - y_{\varrho h}
       \rangle_{H_Y} + \varrho \, \langle D (y_\varrho - y_{\varrho h}),
       y_h - y_{\varrho h} \rangle_{Y^*,Y} \\
    && \hspace*{-5cm}
       = \, \langle y_\varrho - y_{\varrho h} , y_\varrho - y_h
       \rangle_{H_Y} + \varrho \, \langle D (y_\varrho - y_{\varrho h}),
       y_\varrho - y_h \rangle_{Y^*,Y} \\
    && \hspace*{-4cm}
       + \langle \overline{y} - y_{\varrho h} , y_h - y_{\varrho h}
       \rangle_{H_Y} - \varrho \, \langle D y_{\varrho h},
       y_h - y_{\varrho h} \rangle_{Y^*,Y} \\
    && \hspace*{-3cm}
       + \langle y_\varrho - \overline{y} , y_h - y_{\varrho h}
       \rangle_{H_Y} + \varrho \, \langle D y_\varrho ,
       y_h - y_{\varrho h} \rangle_{Y^*,Y} \\
    && \hspace*{-5cm}
       \leq \, \langle y_\varrho - y_{\varrho h} , y_\varrho - y_h
       \rangle_{H_Y} + \varrho \, \langle D (y_\varrho - y_{\varrho h}),
       y_\varrho - y_h \rangle_{Y^*,Y} \\
    && \hspace*{-3cm}
       + \langle y_\varrho - \overline{y} , y_h - y_{\varrho h}
       \rangle_{H_Y} + \varrho \, \langle D y_\varrho ,
       y_h - y_{\varrho h} \rangle_{Y^*,Y} \\
    && \hspace*{-5cm}
       = \, \langle y_\varrho - y_{\varrho h} , y_\varrho - y_h
       \rangle_{H_Y} + \varrho \, \langle D (y_\varrho - y_{\varrho h}),
       y_\varrho - y_h \rangle_{Y^*,Y} \\
    && \hspace*{-3cm}
       + \langle \varrho D y_\varrho + y_\varrho - \overline{y} ,
       y_h - y_{\varrho h} \rangle_{Y^*,Y} \\
    && \hspace*{-5cm}
       \leq \, \| y_\varrho - y_{\varrho h} \|_{H_Y}
       \| y_\varrho - y_h \|_{H_Y} + \varrho \,
       \| y_\varrho - y_{\varrho h} \|_D \| y_\varrho - y_h \|_D \\
    && \hspace*{-3cm} +
       \| \varrho D y_\varrho + y_\varrho - \overline{y} \|_{H_Y}
       \| y_h - y_{\varrho h} \|_{H_Y} \, .
  \end{eqnarray*}
  When using Young's inequality, we further have
    \begin{eqnarray*}
    \| y_\varrho - y_{\varrho h} \|^2_{H_Y} +
    \varrho \, \| y_\varrho - y_{\varrho h} \|^2_D
      && \\
       && \hspace*{-5cm}
       \leq \, \frac{1}{4} \, \| y_\varrho - y_{\varrho h} \|_{H_Y}^2 +
       \| y_\varrho - y_h \|_{H_Y}^2 + \frac{1}{2} \, \varrho \,
          \| y_\varrho - y_{\varrho h} \|_D^2 +
          \frac{1}{2} \, \varrho \, \| y_\varrho - y_h \|^2_D \\
    && \hspace*{-3cm} +
       \| \varrho D y_\varrho + y_\varrho - \overline{y} \|_{H_Y}^2 +
       \frac{1}{4} \, \| y_h - y_{\varrho h} \|_{H_Y}^2 \\
       && \hspace*{-5cm}
       \leq \, \frac{1}{4} \, \| y_\varrho - y_{\varrho h} \|_{H_Y}^2 +
       \| y_\varrho - y_h \|_{H_Y}^2 + \frac{1}{2} \, \varrho \,
          \| y_\varrho - y_{\varrho h} \|_D^2 +
          \frac{1}{2} \, \varrho \, \| y_\varrho - y_h \|^2_D \\
    && \hspace*{-3cm} +
       \| \varrho D y_\varrho + y_\varrho - \overline{y} \|_{H_Y}^2 +
       \frac{1}{2} \, \| y_h - y_\varrho \|_{H_Y}^2 +
       \frac{1}{2} \, \| y_\varrho - y_{\varrho h} \|_{H_Y}^2 ,
    \end{eqnarray*}
    i.e.,
       \begin{eqnarray*}
    \frac{1}{4} \, \| y_\varrho - y_{\varrho h} \|^2_{H_Y} +
    \frac{1}{2} \, \varrho \, \| y_\varrho - y_{\varrho h} \|^2_D
      && \\
       && \hspace*{-5cm}
       \leq \, 
       \frac{3}{2} \, \| y_\varrho - y_h \|_{H_Y}^2 +
          \frac{1}{2} \, \varrho \, \| y_\varrho - y_h \|^2_D +
       \| \varrho D y_\varrho + y_\varrho - \overline{y} \|_{H_Y}^2 .
       \end{eqnarray*}
       This gives the assertion.
\end{proof}

\section{Distributed control of the Poisson equation}
\label{Section:Application}
In this section, we will describe the application of the abstract theory
to the solution of distributed control problems for the Poisson
equation, considering the control either in $L^2(\Omega)$ or in
$H^{-1}(\Omega)$, and using either a constant or a variable regularization
parameter $\varrho$.

\subsection{$H^{-1}$ regularization}
\label{Subsection:Application:EnergyRegularization}
First we consider the distributed optimal control problem to minimize
\begin{equation}\label{Eqn:Application:PoissonFunctional Energy}
  {\mathcal{J}}(y_\varrho,u_\varrho) =
  \frac{1}{2} \, \| y_\varrho - \overline{y} \|^2_{L^2(\Omega)} +
  \frac{1}{2} \, \varrho \, \| u_\varrho \|^2_{H^{-1}(\Omega)}
\end{equation}
subject to the Dirichlet boundary value problem for the Poisson equation,
\begin{equation}\label{Eqn:Application:PoissonDBVP}
  - \Delta y_\varrho = u_\varrho \quad \mbox{in} \; \Omega,
  \quad y_\varrho =0 \quad \mbox{on} \; \partial \Omega .
\end{equation}
We assume that $\Omega \subset {\mathbb{R}}^d$, $d=1,2,3$, is a bounded
domain with either smooth boundary $\partial \Omega$, or convex.
The standard variational formulation of the Dirichlet boundary value
problem \eqref{Eqn:Application:PoissonDBVP} is to find
$y_\varrho \in H^1_0(\Omega)$ such that
\begin{equation}\label{Eqn:Application:Poisson DBVP VF}
  \int_\Omega \nabla y_\varrho(x) \cdot \nabla y(x) \, dx =
  \langle u_\varrho , y \rangle_\Omega
\end{equation}
is satisfied for all $v \in H^1_0(\Omega)$. The variational formulation
\eqref{Eqn:Application:Poisson DBVP VF} admits a unique solution
$y_\varrho \in Y := H^1_0(\Omega)$ when assuming
$u_\varrho \in U:= H^{-1}(\Omega) = X^* = [H^1_0(\Omega)]^*$, i.e.,
$X=H^1_0(\Omega)$, and $H_X = H_Y = L^2(\Omega)$.
When using the norm $\| y \|_Y = \| \nabla y \|_{L^2(\Omega)}$ we easily
conclude the abstract assumptions for  
$A = B = - \Delta : H^1_0(\Omega) \to H^{-1}(\Omega)$
with $c_1^A = c_1^B = c_2^A = c_2^B = 1$. Moreover,
$S = B^* A^{-1} B = - \Delta : H^1_0(\Omega) \to H^{-1}(\Omega)$
with $c_1^S = c_2^S = 1$. 
Hence we can rewrite the abstract variational formulation
\eqref{Eqn:AbstractOCP:Abstract VF} to find $y_\varrho \in H^1_0(\Omega)$
such that
\begin{equation}\label{Eqn:Application:VF}
  \langle y_\varrho , y \rangle_{L^2(\Omega)} +
  \varrho \, \langle \nabla y_\varrho , \nabla y \rangle_{L^2(\Omega)}
  =
  \langle \overline{y} , y \rangle_{L^2(\Omega)}
\end{equation}
is satisfied for all $y \in H^1_0(\Omega)$. 
Note that, in fluid mechanics, the variational problem \eqref{Eqn:Application:VF} 
is known as differential filter; see, e.g., \cite{John2016Book}. 
The results of Lemma \ref{Lemma:AbstractOCP:RegularizationErrorUnconstrained} now read
\[
  \| y_\varrho - \overline{y} \|_{L^2(\Omega)} \leq
  \left \{
    \begin{array}{ccl}
      \| \overline{y} \|_{L^2(\Omega)}
      & & \mbox{if} \; \overline{y} \in L^2(\Omega), \\[1mm]
      \varrho^{1/2} \, \| \nabla \overline{y} \|_{L^2(\Omega)}
      & & \mbox{if} \; \overline{y} \in H^1_0(\Omega), \\[1mm]
      \varrho \, \| \Delta \overline{y} \|_{L^2(\Omega)}
      & & \mbox{if} \; \overline{y} \in H^1_0(\Omega,\Delta),
    \end{array} \right.
\]
where we have used
$H^1_0(\Omega,\Delta) := \{ y \in H^1_0(\Omega) : \Delta y \in L^2(\Omega) \}$.
Moreover,
\[
  \| \nabla(y_\varrho - \overline{y}) \|_{L^2(\Omega)} \leq
  \left \{
    \begin{array}{ccl}
      \| \nabla \overline{y} \|_{L^2(\Omega)}
      & & \mbox{if} \; \overline{y} \in H^1_0(\Omega), \\[1mm]
      \varrho^{1/2} \, \| \Delta \overline{y} \|_{L^2(\Omega)}
      & & \mbox{if} \; \overline{y} \in H^1_0(\Omega,\Delta).
    \end{array} \right.
\]
For the discretization of the variational formulation
\eqref{Eqn:Application:VF} we introduce the standard finite
element space $Y_h = \mbox{span} \{ \varphi_i \}_{i=1}^M \subset
Y = H^1_0(\Omega)$ of piecewise linear continuous basis functions
$\varphi_i$ which are defined with respect to an admissible decomposition 
$\mathcal{T}_h$ of $\Omega$ into simplicial shape regular finite elements
$\tau$ of local mesh size $h_\tau$, and $h = \max_{\tau \in \mathcal{T}_h} h_\tau$.
For $y \in Y = H^1_0(\Omega)$ let $P_hy \in Y_h$ be the unique solution
of the variational formulation satisfying
\[
  \int_\Omega \nabla P_h y \cdot \nabla z_h \, dx =
  \int_\Omega \nabla y \cdot \nabla z_h \, dx \quad
  \mbox{for all} \; z_h \in Y_h .
\]
When using standard finite element error estimates we immediately
have
\[
  \| \nabla (y-P_hy) \|_{L^2(\Omega)} \leq \| \nabla y \|_{L^2(\Omega)}
  \quad \mbox{for} \; y \in H^1_0(\Omega),
\]
and
\[
  \| \nabla (y-P_hy) \|_{L^2(\Omega)} \leq c \, h \, |y|_{H^2(\Omega)} \leq
  c \, h \, \| \Delta y \|_{L^2(\Omega)} \quad
  \mbox{for} \; y \in H^1_0(\Omega,\Delta),
\]
where for the last inequality we need that $\Omega$ is either smoothly
bounded or convex, as assumed. In this case, and using the
Aubin--Nitsche trick, we also have the error estimates
\[
  \| y - P_h y \|_{L^2(\Omega)} \leq
  c \, h \, \| \nabla y \|_{L^2(\Omega)}, \quad
  \| y - P_h y \|_{L^2(\Omega)} \leq
  c \, h^2 \, \| \Delta y \|_{L^2(\Omega)}.
\]
Hence we have established the abstract assumptions
\eqref{Eqn:AbstractOCP:Approximation Assumption} and
\eqref{Eqn:AbstractOCP:Approximation Assumption 2}, with $\mbox{$\alpha=1$}$,
implying the optimal choice $\varrho = h^2$.
The finite element variational formulation of
\eqref{Eqn:Application:VF} reads to find $y_{\varrho h} \in Y_h$ such that
\begin{equation}
\label{Eqn:Application:FEM}
  \langle y_{\varrho h} , y_h \rangle_{L^2(\Omega)} +
  \varrho \, \langle \nabla y_{\varrho h} , \nabla y_h \rangle_{L^2(\Omega)}
  =
  \langle \overline{y} , y_h \rangle_{L^2(\Omega)}
\end{equation}
is satisfied for all $y_h \in Y_h$. 
For the choice $\varrho = h^2$, the error
estimates \eqref{Eqn:AbstractOCP:Error target} read
\[
   \| y_{\varrho h} - \overline{y} \|_{L^2(\Omega)} \leq c \, \left \{
      \begin{array}{ccl}
        \| \overline{y} \|_{L^2(\Omega)}
        & & \mbox{for} \; \overline{y} \in L^2(\Omega), \\[2mm]
        h \, \| \nabla \overline{y} \|_{L^2(\Omega)}
        & & \mbox{for} \; \overline{y} \in H^1_0(\Omega), \\[2mm]
        h^2 \, \| \Delta \overline{y} \|_{L^2(\Omega)}
        && \mbox{for} \; \overline{y} \in H^1_0(\Omega,\Delta) =
           H^1_0(\Omega) \cap H^2(\Omega) .
      \end{array} \right.
\]
Finally, for $s \in [0,1]$ we define the interpolation space
$H^s_0(\Omega) = [L^2(\Omega),H^1_0(\Omega)]_{|s}$, 
and when assuming $\overline{y} \in H^s_0(\Omega)$ the error estimate
\eqref{Eqn:AbstractOCP:Final Error 3} reads
\begin{equation}
\label{Eqn:Application:ErrorEstimateH^s}
  \| y_{\varrho h} - \overline{y} \|_{L^2(\Omega)} \, \leq \,
  c \, h^s \, \| \overline{y} \|_{H^s_0(\Omega)} .
\end{equation}
Once the basis is chosen, the finite element scheme \eqref{Eqn:Application:FEM}
is equivalent to the linear system of algebraic equations
\begin{equation}\label{Eqn:Application:Energy LGS}
(\mathbf{M}_h + \varrho \, \mathbf{K}_h) \mathbf{y}_{\varrho h} = \overline{\mathbf{y}}_h,
\end{equation}
where the mass and stiffness matrices are defined via their entries
\[
 M_h[j,i] = \int_\Omega \varphi_i(x) \varphi_j(x) \, dx 
 \quad \mbox{and} \quad
 K_h[j,i] = \int_\Omega \varphi_i(x) \varphi_j(x) \, dx 
\]
for $i,j=1,\ldots,M$,
and the source vector $\overline{\mathbf{y}}_h$ via its coefficients
\[
  \overline{y}_j =
  \int_\Omega \overline{y}(x) \varphi_j(x) \, dx \quad
  \mbox{for} \; j=1,\ldots,M.
\]
\subsection{Variable $H^{-1}$ regularization}
Instead of the variational formulation \eqref{Eqn:Application:VF} with a
constant regularization parameter $\varrho$, we now consider a variational
formulation with a suitable regularization function $\varrho(x)$, $x \in \Omega$. 
For a given decomposition ${\mathcal{T}}_h$ of $\Omega$
into finite elements $\tau$ of local mesh size $h_\tau$, we define
the mesh dependent regularization function
\[
\varrho_h(x) = h_\tau^2 \quad \mbox{for} \; x \in \tau ,
\]
and the mesh dependent norm
\[
  \| y \|^2_{H^1_0(\Omega),\varrho_h} :=
  \int_\Omega \varrho_h(x) \, |\nabla y(x)|^2 \, dx \quad
  \mbox{for} \; y \in H^1_0(\Omega).
\]
Then we consider the variational formulation to find
$y_{\varrho_h} \in H^1_0(\Omega)$ such that
\begin{equation}\label{Eqn:Application:Variable VF}
  \int_\Omega y_{\varrho_h}(x) y(x) \, dx +
  \int_\Omega \varrho_h(x) \, \nabla y_{\varrho_h}(x) \cdot \nabla y(x) \, dx
  =
  \int_\Omega \overline{y}(x) y(x) \, dx
\end{equation}
is satisfied for all $y \in H^1_0(\Omega)$. 

As in Lemma \ref{Lemma:AbstractOCP:RegularizationErrorUnconstrained}
we conclude the regularization error estimates
\[
  \| y_{\varrho_h} - \overline{y} \|_{L^2(\Omega)} \leq
  \| \overline{y} \|_{L^2(\Omega)}, \quad
  \| y_{\varrho_h} \|_{L^2(\Omega)} \leq \| \overline{y} \|_{L^2(\Omega)},
  \quad
  \| y_{\varrho_h} \|_{H^1_0(\Omega),\varrho_h} \leq
    \| \overline{y} \|_{L^2(\Omega)}
\]
for $\overline{y} \in L^2(\Omega)$, and
\[
  \| y_{\varrho_h} - \overline{y} \|_{H^1_0(\Omega),\varrho_h} \leq
  \| \overline{y} \|_{H^1_0(\Omega),\varrho_h}, \quad
  \| y_{\varrho_h} - \overline{y} \|_{L^2(\Omega)} \leq
  \| \overline{y} \|_{H^1_0(\Omega),\varrho_h},
\]
as well as
\[
  \| y_{\varrho_h} \|_{H^1_0(\Omega),\varrho_h} \leq
  \| \overline{y} \|_{H^1_0(\Omega),\varrho_h}
\]
for $\overline{y} \in H^1_0(\Omega)$.

The finite element discretization of \eqref{Eqn:Application:Variable VF}
reads to find $y_{\varrho_h h} \in Y_h$ such that
\begin{equation}\label{Eqn:Application:Variable FEM}
  \int_\Omega y_{\varrho_h h}(x) y_h(x) \, dx +
  \int_\Omega \varrho_h(x) \, \nabla y_{\varrho_h h}(x) \cdot \nabla y_h(x) \, dx
  =
  \int_\Omega \overline{y}(x) y_h(x) \, dx
\end{equation}
is satisfied for all $y_h \in Y_h$.
In this case, Cea's lemma \eqref{Eqn:AbstractOCP:AbstractCea} reads
\begin{eqnarray*}
  && \| y_{\varrho_h} - y_{\varrho_h,h} \|^2_{L^2(\Omega)} +
     \| y_{\varrho_h} - y_{\varrho_h,h} \|^2_{H^1_0(\Omega),\varrho_h} \\
  && \hspace*{3cm}
     \leq \inf\limits_{y_h \in Y_h}
     \Big[
     \| y_{\varrho_h} - y_h \|^2_{L^2(\Omega)} +
     \| y_{\varrho_h} - y_h \|^2_{H^1_0(\Omega),\varrho_h}
     \Big] ,
\end{eqnarray*}
and when choosing $y_h \equiv 0$ this gives
\[
  \| y_{\varrho_h} - y_{\varrho_h,h} \|_{L^2(\Omega)} \leq
  \sqrt{2} \, \| \overline{y} \|_{L^2(\Omega)} \quad
  \mbox{for} \; \overline{y} \in L^2(\Omega) .
\]
Moreover, when considering some quasi-interpolation $y_h = P_h y_{\varrho_h}$
we obtain the error estimate
\[
  \| y_{\varrho_h} - y_{\varrho_hh} \|_{L^2(\Omega)} \leq
  c \, \| \overline{y} \|_{H^1_0(\Omega),\varrho_h} =
  c \, \left(
    \sum\limits_{\tau \subset {\mathcal{T}}_h}
    h_\tau^2 \, \| \nabla \overline{y} \|^2_{L^2(\tau)}
  \right)^{1/2} \quad \mbox{for} \;
  \overline{y} \in H^1_0(\Omega) .
\]
When combining these results with the regularization error estimates,
we finally obtain
\[
  \| y_{\varrho_hh} - \overline{y} \|_{L^2(\Omega)} \leq
  c \, \| \overline{y} \|_{L^2(\Omega)}, \quad
  \| y_{\varrho_hh} - \overline{y} \|_{L^2(\Omega)} \leq
  c \, \left(
    \sum\limits_{\tau \subset {\mathcal{T}}_h}
    h_\tau^2 \, \| \nabla \overline{y} \|^2_{L^2(\tau)}
  \right)^{1/2} .
\]

Instead of \eqref{Eqn:Application:Energy LGS}, we now conclude
the linear system
\begin{equation}
  (\mathbf{M}_h + \mathbf{K}_{\varrho_h h}) \mathbf{y}_{\varrho h} = \overline{\mathbf{y}}_h
\end{equation}
from the finite element scheme \eqref{Eqn:Application:Variable FEM},
where the entries of the diffusion type stiffness matrix
$\mathbf{K}_{\varrho_h h}$
are now given by
\[
  K_{\varrho_hh}[j,i] = \int_\Omega \varrho_h(x) \,
  \nabla \varphi_i(x) \cdot \nabla \varphi_j(x) \, dx, \quad
  i,j=1,\ldots,M.
\]
%

%
%

\subsection{$L^2$ regularization}
Instead of \eqref{Eqn:Application:PoissonFunctional Energy} we now
consider the optimal control problem to minimize
\begin{equation}\label{Eqn:Application:PoissonFunctional L2}
  {\mathcal{J}}(y_\varrho,u_\varrho) =
  \frac{1}{2} \, \| y_\varrho - \overline{y} \|^2_{L^2(\Omega)} +
  \frac{1}{2} \, \varrho \, \| u_\varrho \|^2_{L^2(\Omega)}
\end{equation}
subject to the Dirichlet boundary value problem
\eqref{Eqn:Application:PoissonDBVP} where we now consider the source
term $u_\varrho \in U = X^* = L^2(\Omega)$, i.e., $X=L^2(\Omega)$.
For the solution $y_\varrho$ of \eqref{Eqn:Application:PoissonDBVP}
we therefore have $Y = H^1_0(\Omega,\Delta) := \{ y \in H^1_0(\Omega) :
\Delta y \in L^2(\Omega)\}$, with norm
$\| y \|_{H^1_0(\Omega,\Delta)} = \| \Delta y \|_{L^2(\Omega)}$. 
Using $B = - \Delta : H^1_0(\Omega,\Delta) \to L^2(\Omega)$, 
we therefore have
\[
  \| B y \|_{L^2(\Omega)} = \| \Delta y \|_{L^2(\Omega)} =
  \| y \|_{H^1_0(\Omega,\Delta)} 
\]
for all $ 0 \neq y \in H_0^1(\Omega,\Delta)$,
and
\[
  \| y \|_{Y=H_0^1(\Omega,\Delta)} =
  \| \Delta y \|_{L^2(\Omega)} =
  \frac{\langle - \Delta y , - \Delta y \rangle_{L^2(\Omega)}}
  {\| \Delta y \|_{L^2(\Omega)}} \leq
  \sup\limits_{0 \neq q \in L^2(\Omega)}
  \frac{\langle - \Delta y , q \rangle_{L^2(\Omega)}}{\| q \|_{L^2(\Omega)}},
\]
i.e., $c_1^B = c_2^B = 1$. Moreover, $A = I : L^2(\Omega) \to L^2(\Omega)$,
with $ c_1^A = c_2^A = 1$. 
Thus, we define
$S := B^* B : H^1_0(\Omega,\Delta) \to [H^1_0(\Omega,\Delta)]^*$
with $c_1^S = c_2^S = 1$, and where
$B^* : L^2(\Omega) \to [H^1_0(\Omega,\Delta)]^*$ is the adjoint of
$B : H^1_0(\Omega,\Delta) \to L^2(\Omega)$ satisfying
\[
  \langle B^* q , y \rangle_\Omega
  = \langle q , By \rangle_{L^2(\Omega)} \quad
  \mbox{for all} \; (q,y) \in L^2(\Omega) \times H^1_0(\Omega,\Delta).
\]
Hence, we can rewrite the abstract variational formulation
\eqref{Eqn:AbstractOCP:Abstract VF} to find
$y_\varrho \in H^1_0(\Omega,\Delta)$ such that
\begin{equation}\label{Eqn:Application:VF L2}
  \langle y_\varrho , y \rangle_{L^2(\Omega)} + \varrho \,
  \langle \Delta y_\varrho , \Delta y \rangle_{L^2(\Omega)} \, = \,
  \langle \overline{y} , y \rangle_{L^2(\Omega)}
\end{equation}
is satisfied for all $y \in H^1_0(\Omega,\Delta)$. Note that
\eqref{Eqn:Application:VF L2} is the variational formulation of the
Dirchlet problem for the BiLaplace equation,
\begin{equation}\label{Eqn:Application:L2 BiLaplace}
  \varrho \, \Delta^2 y_\varrho + y_\varrho = \overline{y} \quad
  \mbox{in} \; \Omega, \quad
  y_\varrho = \Delta y_\varrho = 0 \quad
  \mbox{on} \; \partial \Omega .
\end{equation}
The results of Lemma \ref{Lemma:AbstractOCP:RegularizationErrorUnconstrained}
now read
\[
  \| y_\varrho - \overline{y} \|_{L^2(\Omega)} \leq
  \left \{
    \begin{array}{ccl}
      \| \overline{y}  \|_{L^2(\Omega)}
      & & \mbox{if} \; \overline{y} \in L^2(\Omega), \\[1mm]
      \varrho^{1/2} \, \| \Delta \overline{y} \|_{L^2(\Omega)}
      && \mbox{if} \; \overline{y} \in H^1_0(\Omega,\Delta),
    \end{array}
  \right.
\]
and
\[
  \| \Delta (y_\varrho - \overline{y}) \|_{L^2(\Omega)} \leq
  \| \Delta \overline{y} \|_{L^2(\Omega)} \quad \mbox{for} \;
  \overline{y} \in H^1_0(\Omega,\Delta).
\]
For a conforming finite element discretization of the variational
formulation \eqref{Eqn:Application:VF L2} we need to introduce
an ansatz space $Y_h \subset Y = H^1_0(\Omega,\Delta)$. 
At this time,
and for simplicity of the presentation, let us first consider the case
$d=1$
and $\Omega = (0,1)$. In this case, we have $Y=H^1_0(0,1) \cap H^2(0,1)$,
and for a conforming ansatz space we can use the space
$Y_h = S_h^2(0,1) \cap H^1_0(0,1)$ of second order B splines. 
Since, for
$d=1$,
the nodal interpolation operator $I_h : Y \to Y_h$ is
well defined and bounded, we can write the abstract
assumptions \eqref{Eqn:AbstractOCP:Approximation Assumption} as
\[
  \| y - I_h y \|_{L^2(0,1)} \leq c_1 \, h^2 \, \| y'' \|_{L^2(0,1)}, \quad
  \| (y-I_hy)'' \|_{L^2(0,1)}  \leq c_2 \, \| y'' \|_{L^2(0,1)}, 
\]
i.e., $\alpha = 2$, implying the optimal choice $\varrho = h^4$.
The Galerkin finite element formulation of \eqref{Eqn:Application:VF L2}
then reads to find $y_{\varrho h} \in Y_h$ such that
\begin{equation}\label{Eqn:Application:VF L2 FEM}
  \langle y_{\varrho h} , y_h \rangle_{L^2(0,1)} + \varrho \,
  \langle y_{\varrho h}'' , y_h'' \rangle_{L^2(0,1)} 
  \, = \,
  \langle \overline{y} , y_h \rangle_{L^2(0,1)}
\end{equation}
is satisfied for all $y_h \in Y_h$, and, for the error estimate
\eqref{Eqn:AbstractOCP:Error target}, we obtain, for $\varrho=h^4$,
\[
  \| y_{\varrho h} - \overline{y} \|_{L^2(0,1)} \leq c \,
  \left \{
    \begin{array}{ccl}
      \| \overline{y} \|_{L^2(0,1)}
      & & \mbox{for} \; \overline{y} \in L^2(0,1), \\[2mm]
      h^2 \, \| \overline{y}'' \|_{L^2(0,1)}
      & & \mbox{for} \; \overline{y} \in H^1_0(0,1) \cap H^2(0,1). 
    \end{array} \right.    
\]  
Finally, when using some space interpolation arguments, we conclude
the error estimate
\[
  \| y_{\varrho h} - \overline{y} \|_{L^2(0,1)} \leq
  c \, h^{2s} \, \| \overline{y} \|_{
    [L^2(0,1),H^1_0(0,1) \cap H^2(0,1)]_{|s}}
\]
provided that
$\overline{y} \in [L^2(0,1),H^1_0(0,1) \cap H^2(0,1)]_{|s}$
for some $s \in [0,1]$.

Although we can generalize the above approach to domains
$\Omega = (0,1)^d \subset {\mathbb{R}}^d$, $d=2,3$ 
by using tensor product finite element spaces or IgA spaces,
the construction of conforming
finite element spaces $Y_h \subset H^1_0(\Omega,\Delta)$ with respect to
simplicial decompositions of $\Omega$ seems to be more challenging,
e.g., \cite{AinsworthParker:2024}.
Hence, we will describe an alternative non-conforming approach as follows.
For $y_\varrho \in H^1_0(\Omega,\Delta)$ being the unique solution of \eqref{Eqn:Application:VF L2}, 
we define $p_\varrho = \varrho \Delta y_\varrho \in L^2(\Omega)$, and we can
rewrite the Dirichlet boundary value problem for the BiLaplace
equation \eqref{Eqn:Application:L2 BiLaplace} as system,
\[
  - \Delta p_\varrho = y_\varrho - \overline{y}, \quad
  \frac{1}{\varrho} \, p_\varrho - \Delta y_\varrho = 0 \quad \mbox{in} \;
  \Omega, \quad
  y_\varrho = p_\varrho = 0 \; \mbox{on} \; \partial \Omega .
\]
From this system we conclude $p_\varrho,y_\varrho \in H^1_0(\Omega)$,
and in the sequel $y_\varrho \in H^1_0(\Omega,\Delta)$.
The related variational formulation reads to find
$(p_\varrho , y_\varrho) \in H^1_0(\Omega) \times H^1_0(\Omega)$
such that
\begin{equation}\label{Eqn:Application:L2 System}
  \frac{1}{\varrho} \, \langle p_\varrho , q \rangle_{L^2(\Omega)} +
  \langle \nabla y_\varrho , \nabla q \rangle_{L^2(\Omega)} = 0, \quad
  \langle \nabla p_\varrho , \nabla y \rangle_{L^2(\Omega)} =
  \langle y_\varrho - \overline{y}, y \rangle_{L^2(\Omega)}
\end{equation}
is satisfied for all $(q,y) \in H^1_0(\Omega) \times H^1_0(\Omega)$;
cf. also Section~\ref{Section:Introduction}.

For the discretization of \eqref{Eqn:Application:L2 System}, 
we now use the conforming
finite element space $V_h := S_h^1(\Omega) \cap H^1_0(\Omega)
= \mbox{span} \{ \varphi_i \}_{i=1}^M$ of piecewise linear and continuous
basis functions $\varphi_i$, as already used in the case of the $H^{-1}$
regularization. This results in a coupled linear system of
algebraic equations  
\[
  \left(
    \begin{array}{cc}
      \frac{1}{\varrho} \, \mathbf{M}_h & \mathbf{K}_h \\[1mm]
      -\mathbf{K}_h & \mathbf{M}_h
    \end{array}
  \right)
  \left(
    \begin{array}{c}
       \mathbf{p}_{\varrho h} \\[1mm]
       \mathbf{y}_{\varrho h}
    \end{array}
  \right)
  \, = \,
  \left(
    \begin{array}{c}
      \mathbf{0}_h \\[1mm]
      \overline{\mathbf{y}}_h
    \end{array} \right) ,
\]
which is equivalent to the Schur complement system
\[
  ( \mathbf{M}_h + \varrho \mathbf{K}_h \mathbf{M}_h^{-1} \mathbf{K}_h) %
  \mathbf{y}_{\varrho h} = \overline{\mathbf{y}}_h \, .
\]
%
%
\subsection{Variable $L^2$ regularization}

Instead of \eqref{Eqn:Application:VF L2} we now consider a
variational formulation to find $y_{\varrho_h} \in H^1_0(\Omega,\Delta)$
such that
\[
  \int_\Omega y_{\varrho_h}(x) y(x) \, dx +
  \int_\Omega \varrho_h(x) \, \Delta y_{\varrho_h}(x) \,
  \Delta y(x) \, dx \, = \, \int_\Omega \overline{y}(x) \, y(x) \, dx
\]
is satisfied for all $y \in H^1_0(\Omega,\Delta)$, with
the mesh dependent regularization function
\[
\varrho_h(x) = h_\tau^4 \quad \mbox{for} \; x \in \tau .
\]
When introducing
$p_{\varrho_h} = \varrho_h \, \Delta y_{\varrho_h}$, we end up with
a variational system to find
$(p_{\varrho_h},y_{\varrho_h}) \in H^1_0(\Omega) \times H^1_0(\Omega)$
such that
\[
  \int_\Omega \frac{1}{\varrho_h(x)} \, p_{\varrho_h}(x) \, q(x) \, dx +
  \int_\Omega \nabla y_{\varrho_h}(x) \cdot \nabla q(x) \, dx = 0
\]
is satisfied for all $q \in H^1_0(\Omega)$, and
\[
  - \int_\Omega \nabla p_{\varrho_h}(x) \cdot \nabla y(x) \, dx +
  \int_\Omega y_{\varrho_h}(x) y(x) \, dx =
  \int_\Omega \overline{y}(x) y(x) \, dx
\]
is satisfied for all $y \in H^1_0(\Omega)$. The finite element
discretization of this system results in a linear system of
algebraic equations,
\[
  \left(
    \begin{array}{cc}
      \mathbf{M}_{1/\varrho_h,h} & \mathbf{K}_h \\[1mm]
      - \mathbf{K}_h & \mathbf{M}_h
    \end{array}
  \right)
  \left(
    \begin{array}{c}
      \mathbf{p}_{\varrho h} \\[1mm]
      \mathbf{y}_{\varrho h}
    \end{array}
  \right)
  \, = \,
  \left(
    \begin{array}{c}
      \mathbf{0}_h\\[1mm]
      \overline{\mathbf{y}}_h
    \end{array} \right) ,
\]
where the scaled mass matrix $\mathbf{M}_{h,1/\varrho_h}$ is given by its
entries
\[
  M_{1/\varrho_h,h}[j,i] =
  \int_\Omega \frac{1}{\varrho(x)} \, \varphi_i(x) \, \varphi_j(x) \, dx
  \quad \mbox{for} \; i,j=1,\ldots, M.
\]
When eliminating $\underline{p}$, we end up with the
Schur complement system
\[
  ( \mathbf{M}_h + \varrho \mathbf{K}_h \mathbf{M}_{1/\varrho_h,h}^{-1} \mathbf{K}_h) 
  \mathbf{y}_{\varrho_h h} = \overline{\mathbf{y}}_h \, .
\]
%
%
\subsection{Control recovering}
\label{Subsection:Application:EnergyRegularisation:ControlRecovering}
For the finite element approximation of the control $u_\varrho = B y_\varrho$,
we consider the abstract variational
formulation \eqref{Eqn:AbstractOCP:Control VF} with
$U= X^* =H^{-1}(\Omega)$ and $X = H^1_0(\Omega)$. 
In this particular situation, we can choose the finite element space
$X_h = Y_h = \mbox{span} \{ \varphi_k \}_{k=1}^M$
of piecewise linear continuous basis functions $\varphi_k$,
where the assumption \eqref{Eqn:AbstractOCP:Assumption Pi}
coincides with the second assumption
in \eqref{Eqn:AbstractOCP:Approximation Assumption 2} which was
already established.
When an approximate state $y_{\varrho h} \in Y_h \subset H^1_0(\Omega)$
is known, we can compute the related control
$\widetilde{u}_{\varrho h} \in U_h
= \mbox{span} \{ \psi_k \}_{k=1}^M \subset L^2(\Omega) \subset H^{-1}(\Omega)$
as unique solution of the variational formulation
\begin{equation}\label{Eqn:Application:Control Reconstruction}
  \int_\Omega \widetilde{u}_{\varrho h}(x) \phi_h(x) \, dx =
  \int_\Omega \nabla y_{\varrho h}(x) \cdot \nabla y_h(x) \, dx \quad
  \mbox{for all} \; y_h \in Y_h .
\end{equation}
It remains to define $U_h$ in order to satisfy the discrete inf-sup
condition \eqref{Eqn:AbstractOCP:Control inf-sup}, which now reads
\[
  c_S \, \| u_h \|_{H^{-1}(\Omega)}
  \leq \sup\limits_{y_h \in Y_h \subset H^1_0(\Omega)}
  \frac{\langle u_h , y_h \rangle_{L^2(\Omega)}}{\| \nabla y_h
    \|_{L^2(\Omega)}}
  \quad \mbox{for all} \; u_h \in U_h .
\]
A first choice is to consider the control space
$U_h = Y_h \subset H^1_0(\Omega)$ of piecewise linear and continuous basis
functions, i.e., we have to solve the linear system
\eqref{Eqn:AbstractOCP:AlgebraicSystem4Control} with
$\overline{\mathbf M}_h = \mathbf{M}_h$ and ${\mathbf B}_h = \mathbf{K}_h$.
Now the discrete inf-sup condition is equivalent to the stability estimate
\[
\| Q_h y \|_{H^1(\Omega)} \leq \frac{1}{c_S} \,
\| y \|_{H^1(\Omega)}
\]
for the $L^2$ projection
$Q_h : L^2(\Omega) \to Y_h \subset H^1_0(\Omega) \subset L^2(\Omega)$,
see, e.g., \cite{BramblePasciakSteinbach:2002}, which also covers
adaptive meshes.

For the approximate control $\widetilde{u}_{\varrho h}$ we can compute
the related state $\widetilde{y}_\varrho \in H^1_0(\Omega)$ as unique
solution of the variational formulation
\[
  \int_\Omega \nabla \widetilde{y}_\varrho(x) \cdot \nabla y(x) \, dx =
  \int_\Omega \widetilde{u}_{\varrho h} \, y(x) \, dx \quad
  \mbox{for all} \; y \in H^1_0(\Omega),
\]
and from \eqref{Eqn:AbstractOCP:Final Error 3} we conclude the error estimate
\begin{equation}\label{Eqn:Application:Final Error 3}
  \| \widetilde{y}_\varrho - \overline{y} \|_{L^2(\Omega)} \leq
  c \, h^s \, \| \overline{y} \|_{H^s_0(\Omega)} \quad
  \mbox{for} \; \overline{y} \in H^s_0(\Omega) =
  [L^2(\Omega,H^1_0(\Omega))]_{|s}, s \in [0,1],
\end{equation}
when choosing $\varrho = h^2$ in the case of $H^{-1}$ regularization,
and $\varrho = h^4$ in the case of $L^2$ regularization.

Due to the choice $U_h = Y_h \subset H^1_0(\Omega)$ the discrete control
$\widetilde{u}_{\varrho h}$ is much more regular than expected, i.e., it
involves boundary conditions. Although this does not effect the final
error estimate \eqref{Eqn:Application:Final Error 3}, the shape of the
piecewise linear and continuous control $\widetilde{u}_{\varrho h}$ may be
not feasible. As an alternative we aim to construct a piecewise constant
discrete control.

For a given admissible decomposition of $\Omega \subset {\mathbb{R}}^d$ into
shape regular simplicial finite elements $\tau$ we introduce a dual mesh as
follows: For any interior node $x_k \in \Omega$ we define a dual finite
element $\omega_k$ satisfying $\omega_k \cap \omega_j = \emptyset$ for
$x_k \neq x_j$ such that ${\mathcal{T}}_h = \cup_{k=1}^M \overline{\omega}_k$,
see Figure \ref{Fig:mesh and dual mesh} and, e.g., \cite{Steinbach:2002}.
Then we define $U_h = \mbox{span} \{ \psi_k \}_{k=1}^M \subset U=H^{-1}(\Omega)$
as ansatz space of piecewise constant basis functions $\psi_k$ which are one
in $\omega_k$, and zero elsewhere. While the approximation assumption
\eqref{Eqn:AbstractOCP:Assumption Pi} remains unchanged, the discrete
inf-sup condition \eqref{Eqn:AbstractOCP:Control inf-sup} follows
as in \cite{Steinbach:2002}. Moreover, the error estimate
\eqref{Eqn:Application:Final Error 3} remains true. But instead of the
standard mass matrix $\overline{\mathbf{M}}_h = \mathbf{M}_h$, we now have
to use a matrix $\overline{\mathbf{M}}_h = \widetilde{\mathbf{M}}_h$
defined by the entries
\[
  \widetilde{{M}}_h[j,k] = \int_\Omega \varphi_k(x) \, \psi_j(x) \, dx =
  \int_{\omega_j} \varphi_k(x) \, dx, \quad j,k=1,\ldots,M.
\]
Thus, the linear system \eqref{Eqn:AbstractOCP:AlgebraicSystem4Control} now
takes the form $\widetilde{\mathbf{M}}_h \mathbf{u}_{\varrho h} =
\mathbf{K}_h \mathbf{y}_{\varrho h}$. Note, that we additionally have that
\begin{equation*}
  \norm{\nabla y_h}_{L^2(\Omega)} = \sup_{0\neq x_h\in Y_h}
  \frac{\langle \nabla y_h, \nabla x_h \rangle_{L^2(\Omega)}}
  {\norm{\nabla x_h}_{L^2(\Omega)}},
\end{equation*}
and therefore the discrete inf-sup condition \eqref{Eqn:AbstractOCP:discrete inf sup Y} is satisfied. Thus, by Lemma \ref{Lem:AbstractOCP:bound for the cost via state} the cost can be estimated by 
\begin{equation}\label{Eqn:Application:cost bounds laplacian}
  \norm{\nabla y_{\varrho h}}_{L^2(\Omega)} \leq \norm{\widetilde{u}_{\varrho h}}_{H^{-1}(\Omega)}\leq c_S \norm{\nabla y_{\varrho h}}_{L^2(\Omega)}. 
\end{equation}

\begin{remark}
  For $\mathbf{u}= (u_1,\ldots,u_M)^\top \in\mathbb{R}^M$ we compute 
  \begin{equation*}
    (\widetilde{\mathbf{M}}_h\mathbf{u},\mathbf{u})_2 =
    \sum_{i,j=1}^M u_iu_j\int_\Omega \psi_j(x)\varphi_i(x)\, dx =
    \sum_{\ell=1}^N \sum_{i,j=1}^M u_iu_j
    \int_{\tau_\ell} \psi_j(x)\varphi_i(x)\, dx.  
  \end{equation*}
  The local element matrices with entries
  \begin{equation*}
    \widetilde{M}_{\tau_\ell}[i,j] :=
    \int_{\tau_\ell} \psi_j(x)\varphi_i(x)\, dx,\quad \ell=1,\ldots,N,
  \end{equation*}
  can be computed to be, see \cite{Steinbach:2002}, for $d=1$
  \begin{equation*}
    \widetilde{\mathbf{M}}_{\tau_\ell} =
    \frac{|\tau_\ell|}{8}
    \begin{pmatrix}
      3 & 1\\ 1& 3
    \end{pmatrix} \quad \text{with }
    \lambda_{\text{min}}(\widetilde{\mathbf{M}}_{\tau_\ell}) =
    \frac{|\tau_\ell|}{4},\;
    \lambda_{\text{max}}(\widetilde{\mathbf{M}}_{\tau_\ell}) =
    \frac{|\tau_\ell|}{2},
  \end{equation*}
  and for $d=2$
  \begin{equation*}
    \widetilde{\mathbf{M}}_{\tau_\ell} =
    \frac{|\tau_\ell|}{108}
    \begin{pmatrix}
      22 & 7 & 7\\ 7 & 22 & 7\\ 7 & 7 & 22
    \end{pmatrix} \quad \text{with }
    \lambda_{\text{min}}(\widetilde{\mathbf{M}}_{\tau_\ell}) =
    \frac{5}{36}|\tau_\ell|,\;
    \lambda_{\text{max}}(\widetilde{\mathbf{M}}_{\tau_\ell}) =
    \frac{|\tau_\ell|}{3}. 
  \end{equation*}
  Therefore, we see that $\widetilde{\mathbf{M}}_h$ is symmetric and
  positive definite. Moreover, by elemantary computations the spectral
  equivalence inequalities
  \begin{equation}\label{Eqn:lump Masse}
    \underline{c}(d) \, (\text{lump} \,
    (\mathbf{M}_h) \mathbf{u},\mathbf{u})_2 \, \leq \,
    (\widetilde{\mathbf{M}}_h\mathbf{u},\mathbf{u})_2 \, \leq \,
    (\text{lump} \, (\mathbf{M}_h)\mathbf{u},\mathbf{u})_2 
  \end{equation}
  follow, where $\text{lump} \, (\mathbf{M}_h)$ denotes the lumped mass
  matrix and 
  \begin{equation*}
    \underline{c}(d) = \begin{cases}
      \frac{1}{2}, & d=1,\\[2mm]
      \frac{5}{12}, & d=2.
    \end{cases} 
   \end{equation*}
\end{remark}

\begin{figure}[h]
  \begin{subfigure}[c]{0.5\textwidth}
    \centering
    \includegraphics[width=\textwidth]{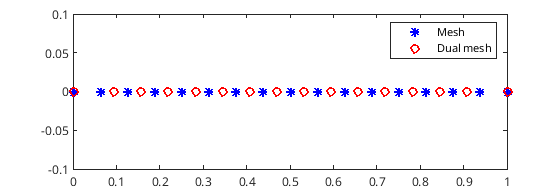}
  \end{subfigure}
  \begin{subfigure}[c]{0.5\textwidth}
    \centering
      \begin{tikzpicture}[scale=0.8]
        \filldraw[red!20] ({7.5/3}, {2.5/3}) -- (1.25,1.25) -- ({2.5/3}, {7.5/3}) -- (1.25,3.75) -- ({7.5/3}, {12.5/3}) -- (3.75,3.75) -- ({12.5/3}, {7.5/3}) -- (3.75,1.25) -- (7.5/3,2.5/3);

        \filldraw[blue!20] (0,0) -- (2.5,0) -- (2.5,2.5/3) -- (1.25,1.25) -- (2.5/3,2.5) -- (0,2.5) -- (0,0);

        \draw (0,0) rectangle (5,5);
        \draw (0,0) -- (5,5);
        \draw (0,5) -- (5,0);
        \filldraw (5,5) circle (2pt);
        \filldraw (0,5) circle (2pt);
        \filldraw (5,0) circle (2pt);
        \filldraw[red] (2.5,2.5) circle (4pt) node[anchor=west] {$x_i\in\Omega$};
        \filldraw[blue] (0,0) circle (4pt) node[anchor=north] {$x_j\in \partial\Omega$};

        \filldraw[blue] (7.5/3,2.5/3) circle (2pt);
        \filldraw[red] (7.5/3,2.5/3) circle (1pt);
        \filldraw[blue] (1.25,1.25) circle (2pt); 
        \filldraw[red] (1.25,1.25) circle (1pt);
        \filldraw[blue] (2.5/3,7.5/3) circle (2pt);
        \filldraw[red] (2.5/3,7.5/3) circle (1pt);
        \filldraw[red] (1.25,3.75) circle (2pt);
        \filldraw[red] (7.5/3,12.5/3) circle (2pt);
        \filldraw[red] (3.75,3.75) circle (2pt);
        \filldraw[red] (12.5/3,7.5/3) circle (2pt);
        \filldraw[red] (3.75,1.25) circle (2pt);
        \filldraw[blue] (0,2.5) circle (2pt);
        \filldraw[blue] (2.5,0) circle (2pt);

        \draw[blue] (7.5/3,2.5/3) -- (1.25,1.25);
        \draw[red,dashed] (7.5/3,2.5/3) -- (1.25,1.25);
        \draw[blue] (1.25,1.25) -- (2.5/3,7.5/3);
        \draw[red,dashed] (1.25,1.25) -- (2.5/3,7.5/3);
        \draw[red] (2.5/3,7.5/3) --  (1.25,3.75);
        \draw[red] (1.25,3.75) -- (7.5/3,12.5/3);
        \draw[red] (7.5/3,12.5/3) -- (3.75,3.75);
        \draw[red] (3.75,3.75) -- (12.5/3,7.5/3);
        \draw[red] (12.5/3,7.5/3) -- (3.75,1.25);
        \draw[red] (3.75,1.25) -- (7.5/3,2.5/3);
        \draw[blue] (0,0) -- (2.5,0); 
        \draw[blue] (2.5,0) -- (7.5/3,2.5/3);
        \draw[blue] (0,0) -- (0,2.5); 
        \draw[blue] (0,2.5) --  (2.5/3,7.5/3);

        \node at (0.3, 1.6) {${\color{blue}P_j}$}; 
        \node at (2.5, 1.6) {${\color{red}P_i}$}; 
      \end{tikzpicture}
    \end{subfigure}    
    \caption{Meshes and dual meshes in 1D (left) and 2D (right).}
    \label{Fig:mesh and dual mesh}
  \end{figure}

\subsection{Solvers and their use in nested iteration}
\label{Subsection:Application:EnergyRegularisation:NestedIteration}
We start this subsection with the observation that we can write all linear
systems to be solved in a unified manner as
\begin{equation}
\label{Eqn:Application:System:Sy=rhs}
 \mathbf{S}_{\varrho h}\mathbf{y}_{\varrho,h} = \overline{\mathbf{y}}_h
\end{equation}
where $\mathbf{S}_{\varrho h} = \mathbf{M}_h + \mathbf{D}_{\varrho h}$ with
\[
  \mathbf{D}_{\varrho,h}  \, = \,
  \left \{
    \begin{array}{ccl}
      \varrho \, \mathbf{K}_h
        & & \mbox{for $H^{-1}$ regularization}, \varrho = h^2, \\[1mm]
      \mathbf{K}_{\varrho_h,h}
      & & \mbox{for variable $H^{-1}$ regularization},
          \varrho_h(x) = h_\tau^2, x \in \tau, \\[1mm]
      \varrho \, \mathbf{K}_h \mathbf{M}_h^{-1} \mathbf{K}_h
      & & \mbox{for $L^2$ regularization}, \varrho=h^4, \\[1mm]
      \mathbf{K}_h \mathbf{M}_{1/\varrho_h,h}^{-1} \mathbf{K}_h
      && \mbox{for variable $L^2$ regularization},
         \varrho_h(x) = h_\tau^4, x \in \tau .
    \end{array}
  \right.
\]
We note that, for constant regularization functions
$\varrho_h(x) = \varrho$ for all $x \in \Omega$, we obtain
$\mathbf{K}_{\varrho_h h} = \varrho \mathbf{K}_h$ as well as
$\mathbf{M}_{1/\varrho_h h} = \frac{1}{\varrho} \mathbf{M}_h$. Hence, it is
sufficient to consider the variable regularizations only. For the system
matrix $\mathbf{S}_{\varrho,h} = \mathbf{M}_h + \mathbf{D}_{\varrho h}$ of
\eqref{Eqn:Application:System:Sy=rhs}, we can prove the following lemma for
all regularizations discussed above.

\begin{lemma}\label{Lemma:Application:SpectralEquivalenceInequalitiesCSC}
  There hold the spectral equivalence inequalities
  \begin{equation}\label{Eqn:Application:SpectralEquivalenceInequalitiesCSC}
    c_1 \, ( \mathbf{C}_h \mathbf{y}_h , \mathbf{y}_h) \, \leq \,
    (\mathbf{S}_{\varrho h} \mathbf{y}_h , \mathbf{y}_h) \, \leq \,
    c_2 \, (\mathbf{C}_h \mathbf{y}_h, \mathbf{y}_h) 
\end{equation}    
for all $\mathbf{y}_h \in {\mathbb{R}}^M$, cf.
\eqref{Eqn:AbstractOCP:SpectralEquivalenceInequalities}, where the
preconditioner $\mathbf{C}_h = \mbox{lump} \, (\mathbf{M}_h)$ is a simple
diagonal matrix that is obtained from the mass matrix $\mathbf{M}_h$ by
mass lumping, i.e.,
\begin{equation*}
  C_h[j,k] = \mbox{lump}(M_h)[j,k] =
  \delta_{j,k} \sum_{i=1}^M M_h[j,i], \quad j,k = 1,\ldots,M;
\end{equation*}
$c_1 = 1/(d+2)$, and $c_2 = 1 + c^2$ with the $h$-independent
$c^2 \ge \lambda_{\max}(\mathbf{M}_h^{-1}\mathbf{D}_{\varrho h})$
being the maximal eigenvalue of the generalized eigenvalue problem
$\mathbf{D}_{\varrho h} \mathbf{v}_h = \lambda \, \mathbf{M}_h\mathbf{v}_h$ 
or, at least, an upper bound of it.
\end{lemma}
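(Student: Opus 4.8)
The plan is to separate the two contributions in $\mathbf{S}_{\varrho h} = \mathbf{M}_h + \mathbf{D}_{\varrho h}$ and to reduce the comparison of $\mathbf{M}_h$ with its lump $\mathbf{C}_h = \mbox{lump}(\mathbf{M}_h)$ to a single element-local eigenvalue computation. Since $\mathbf{D}_{\varrho h}$ is at least positive semidefinite in all four cases (indeed spd), the lower estimate follows at once once $\mathbf{M}_h$ is compared with $\mathbf{C}_h$: we have $(\mathbf{S}_{\varrho h}\mathbf{y}_h,\mathbf{y}_h) \ge (\mathbf{M}_h\mathbf{y}_h,\mathbf{y}_h)$, so it suffices to prove the left half of the mass-lumping equivalence with constant $1/(d+2)$.

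For that equivalence I would work on a single simplex $\tau$. For piecewise linear continuous elements the local mass matrix equals $\frac{|\tau|}{(d+1)(d+2)}(\mathbf{I} + \mathbf{1}\mathbf{1}^\top)$, where $\mathbf{I}$ is the $(d+1)\times(d+1)$ identity and $\mathbf{1}$ the all-ones vector, while its row-sum lump is $\frac{|\tau|}{d+1}\mathbf{I}$. The eigenvalues of $\mathbf{I} + \mathbf{1}\mathbf{1}^\top$ are $d+2$ (simple, eigenvector $\mathbf{1}$) and $1$ ($d$-fold), so the local mass matrix has eigenvalues $\frac{|\tau|}{d+1}$ and $\frac{|\tau|}{(d+1)(d+2)}$, which gives $\frac{1}{d+2}\,\mbox{lump}(\mathbf{M}_\tau) \le \mathbf{M}_\tau \le \mbox{lump}(\mathbf{M}_\tau)$. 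Because both $\mathbf{M}_h$ and $\mathbf{C}_h$ appear in the quadratic form as sums of their element contributions, these one-element inequalities survive assembly, yielding $\frac{1}{d+2}\,\mathbf{C}_h \le \mathbf{M}_h \le \mathbf{C}_h$ globally. Combined with the previous observation, this already delivers the lower bound $c_1 = 1/(d+2)$.

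For the upper bound I would use that, by definition of the generalized eigenvalue problem $\mathbf{D}_{\varrho h}\mathbf{v}_h = \lambda\,\mathbf{M}_h\mathbf{v}_h$, any upper bound $c^2 \ge \lambda_{\max}(\mathbf{M}_h^{-1}\mathbf{D}_{\varrho h})$ gives $(\mathbf{D}_{\varrho h}\mathbf{y}_h,\mathbf{y}_h) \le c^2\,(\mathbf{M}_h\mathbf{y}_h,\mathbf{y}_h)$. Together with $\mathbf{M}_h \le \mathbf{C}_h$ this yields $(\mathbf{S}_{\varrho h}\mathbf{y}_h,\mathbf{y}_h) \le (1+c^2)(\mathbf{M}_h\mathbf{y}_h,\mathbf{y}_h) \le (1+c^2)(\mathbf{C}_h\mathbf{y}_h,\mathbf{y}_h)$, which is exactly the asserted estimate with $c_2 = 1+c^2$.

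The heart of the matter, and the step I expect to be the main obstacle, is to confirm that $c^2$ can be chosen \emph{independently of} $h$ for each regularization, which is precisely where the optimal coupling $\varrho = h^{2\alpha}$ enters. For constant $H^{-1}$ regularization $\mathbf{M}_h^{-1}\mathbf{D}_{\varrho h} = \varrho\,\mathbf{M}_h^{-1}\mathbf{K}_h$, so $c^2 = \varrho\,\lambda_{\max}(\mathbf{M}_h^{-1}\mathbf{K}_h) \le \varrho\,c_I h^{-2}$ by the standard inverse inequality, and $\varrho = h^2$ makes this $\mathcal{O}(1)$; for $L^2$ regularization $\mathbf{M}_h^{-1}\mathbf{D}_{\varrho h} = \varrho\,(\mathbf{M}_h^{-1}\mathbf{K}_h)^2$, so $c^2 \le \varrho\,(c_I h^{-2})^2 = \mathcal{O}(1)$ exactly because $\varrho = h^4$. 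For the two variable regularizations I would carry out the same estimate element by element, exploiting shape regularity rather than global quasi-uniformity: on each $\tau$ the local weight $h_\tau^2$ (resp. $h_\tau^4$) cancels the local inverse-inequality growth $h_\tau^{-2}$ (resp. $h_\tau^{-4}$) coming from $\mathbf{K}_h$ (resp. $\mathbf{K}_h\mathbf{M}_{1/\varrho_h,h}^{-1}\mathbf{K}_h$), and assembling the uniformly bounded local estimates produces an $h$-independent $c^2$. The only genuinely delicate point is the variable $L^2$ case, where the sandwiched inverse $\mathbf{M}_{1/\varrho_h,h}^{-1}$ must be controlled jointly with the two stiffness factors; here I would use that the weight $1/\varrho_h$ is elementwise constant, so that the factor $h_\tau^4$ supplied by $\mathbf{M}_{1/\varrho_h,h}^{-1}$ matches exactly the combined $h_\tau^{-4}$ growth of the two surrounding $\mathbf{K}_h$ factors.
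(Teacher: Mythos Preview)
Your proposal is correct and follows essentially the same route as the paper: drop $\mathbf{D}_{\varrho h}\ge 0$ for the lower bound and use $\mathbf{M}_h \ge (d+2)^{-1}\,\mbox{lump}(\mathbf{M}_h)$, then for the upper bound combine $\mathbf{D}_{\varrho h}\le c^2\mathbf{M}_h$ with $\mathbf{M}_h \le \mbox{lump}(\mathbf{M}_h)$, the $h$-independence of $c^2$ coming from local inverse inequalities matched against the chosen $\varrho$. In fact you supply more detail than the paper does here (it cites \cite{LangerLoescherSteinbachYang:2024NLA} for both the lumping constant and the inverse-inequality step); your explicit element eigenvalue computation for $\mathbf{M}_\tau=\frac{|\tau|}{(d+1)(d+2)}(\mathbf{I}+\mathbf{1}\mathbf{1}^\top)$ is exactly the kind of argument behind that citation. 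The only place to be slightly careful is the variable $L^2$ case: $\mathbf{M}_{1/\varrho_h,h}^{-1}$ is a global inverse, so the element-by-element cancellation you describe is not literal; the clean way is to first replace $\mathbf{M}_{1/\varrho_h,h}$ by its (diagonal, element-local) lump via spectral equivalence and then run your local argument, which is precisely what the cited reference does.
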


\begin{proof}
The lower estimate follows from the inequalities 
$\mathbf{S}_{\varrho h} = \mathbf{M}_h + \mathbf{D}_{\varrho h} \ge
\mathbf{M}_h \ge (d+2)^{-1} \mbox{lump} \, (\mathbf{M}_h)$, where the last
estimate can be found in \cite[Lemma~1]{LangerLoescherSteinbachYang:2024NLA}. 
The upper estimate can be obtained from 
$\mathbf{S}_{\varrho h} = \mathbf{M}_h + \mathbf{D}_{\varrho h} \le
(1 + c^2) \mathbf{M}_h \le \linebreak
(1 + c^2)\, \mbox{lump} \, (\mathbf{M}_h)$.
The estimate $\mathbf{D}_{\varrho h} \le c^2\, \mathbf{M}_h$ follows from
local inverse inequalities and appropriate choices of the regularization
parameter or function $\varrho$ as given above for different regularizations.
We refer to  \cite{LangerLoescherSteinbachYang:2024NLA} for a detailed proof. 
We mention that $c^2 = \lambda_{\max}(\mathbf{M}_h^{-1}\mathbf{D}_{\varrho h})$
is the best possible constant.
\end{proof}

\begin{remark}
The mass matrices $\mathbf{M}_h$ and $\mathbf{M}_{1/\varrho_h,h}$ in 
$\mathbf{D}_{\varrho h}= \mathbf{K}_h \mathbf{M}_h^{-1} \mathbf{K}_h$ and
$\mathbf{D}_{\varrho h} = \mathbf{K}_h \mathbf{M}_{1/\varrho_h,h}^{-1}
\mathbf{K}_h$, respectively, can be replaced by the corresponding lumped
versions $\mbox{lump}(\mathbf{M}_h)$ and
$\mbox{lump}(\mathbf{M}_{1/\varrho_h,h})$ without affecting the discretization
error and the spectral equivalence inequalities; see
\cite{LangerLoescherSteinbachYang:2024NLA}. We note that this replacement of
the mass matrix by their lumped versions makes the matrix-vector
multiplication $\mathbf{S}_{\varrho h} * \mathbf{y}_h$ fast. More precisely,
$\mathbf{S}_{\varrho h} * \mathbf{y}_h$ can be performed in optimal complexity
${\mathcal{O}}(h^{-d})$. This is important when solving the
system \eqref{Eqn:Application:System:Sy=rhs} by pcg as we do in the nested
iteration procedure presented in Algorithm~\ref{Algorithm1}.
\end{remark}

In order to  recover the control $\widetilde{u}_{\varrho h} \leftrightarrow
\widetilde{\mathbf u}_{\varrho h} \in \mathbb{R}^M$, we have to solve the
system \eqref{Eqn:AbstractOCP:AlgebraicSystem4Control} with
$\mathbf{B}_h = \mathbf{K}_h$ and $\overline{\mathbf{M}}_h = \mathbf{M}_h$
or $\overline{\mathbf{M}}_h = \widetilde{\mathbf{M}}_h$. In both cases,
$\overline{\mathbf{C}}_h = \mbox{lump}(\mathbf{M}_h)$
is an asymptotically optimal preconditioner, see \eqref{Eqn:lump Masse}.

Let us now specify the nested iteration and, in particular,
Algorithm~\ref{Algorithm1}, descried in
Subsection~\ref{Subsection:Application:EnergyRegularisation:NestedIteration}
for abstract optimal control problems, in the special case of distributed
control of the Poission equation with energy regularization 
presented in Subsection~\ref{Subsection:Application:EnergyRegularization}.
Let us assume that $\mathcal{T}_\ell = \mathcal{T}_{h_\ell}$ is a sequence 
of uniformly (or adaptively) refined simpicial, shape regular meshes with the 
mesh sizes $h = h_\ell$, $\ell = 1,\ldots,L$, and $Y_\ell$, $X_\ell$, $U_\ell$
are corresponding finite element spaces as described in
Subsections~\ref{Subsection:Application:EnergyRegularization} and
\ref{Subsection:Application:EnergyRegularisation:ControlRecovering}. We
recall that here $X_\ell = Y_\ell = \mbox{span} \{ \varphi_i \}_{i=1}^{M=N}
\subset X=Y = H^1_0(\Omega)$.
Line 11: $\mathbf{y}_\ell \leftarrow \mathbf{S}_\ell^{-1}\mathbf{y}_\ell$ 
in Algorithm~\ref{Algorithm1} now means that the system
\eqref{Eqn:AbstractOCP:AlgebraicSystem4State} is solved by the pcg
iteration with the preconditioner $\mathbf{C}_h = \mbox{lump}(\mathbf{M}_h)$
and the initial guess
$\mathbf{y}_\ell^0 = \mathbf{I}_{\ell-1}^\ell \mathbf{y}_{\ell-1}^n$ 
that is simply interpolated from the last iterate on the coarser mesh
$\mathcal{T}_{\ell-1}$. It is clear that we need a constant number $n$ of
nested iterations on all levels $\ell = 2,\ldots,L$ in order to match the
discretization error \eqref{Eqn:Application:ErrorEstimateH^s}. The coarse
mesh system $\mathbf{S}_1 \mathbf{y}_1 = \overline{\mathbf{y}}_1$
(line 7 in in Algorithm~\ref{Algorithm1}) is usually solved by some sparse
direct method \cite{Davis2006Book}, but it can be solved by pcg with the
same preconditioner and the initial guess $\mathbf{y}_1^0 = \mathbf{0}_1$. 
This immediately yields that we need $\ln h_1^{-1}$ pcg iteration in order
to match the discretization error estimate
\eqref{Eqn:Application:ErrorEstimateH^s} for $h=h_1$.

\subsection{State constraints}
\label{Subsection:Application:EnergyRegularisation:Constraints}
We now consider the minimization of
\eqref{Eqn:Application:PoissonFunctional Energy} subject to the
Poisson equation \eqref{Eqn:Application:PoissonDBVP} with
constraints on the state
$y_\varrho \in K_s := \{ y \in H^1_0(\Omega) : g_- \leq y \leq g_+ \;
\mbox{a.e. in} \; \Omega \}$,
where $g_\pm \in H^1_0(\Omega,\Delta)$ are given barrier functions, and
where we assume $g_- \leq g_+$ and $0 \in K_s$ to be satisfied.
The solution $y_\varrho \in K_s$ of this minimization problem
is then characterized as the unique solution of the variational
inequality
\begin{equation}
  \langle y_\varrho , y-y_\varrho \rangle_{L^2(\Omega)} + \varrho \,
  \langle \nabla y_\varrho , \nabla (y-y_\varrho) \rangle_{L^2(\Omega)}
  \geq \langle \overline{y}, y-y_\varrho \rangle_{L^2(\Omega)}
  \quad \mbox{for all} \; y \in K_s .
\end{equation}
This variational inequality completely corresponds to \eqref{Abstract VI}
as considered in the abstract setting. Hence, all results as given in
Subsection \ref{Subsection:AbstractOCP:Constraints} remain true.
Instead of the linear system \eqref{Eqn:AbstractOCP:AlgebraicSystem4State}
we now have to solve a discrete variational inequality to find
${\mathbf y}_{\varrho h} \in {\mathbb{R}}^M \leftrightarrow
y_{\varrho h} \in K_{s,h}$ such that
\begin{equation}
\label{Eqn:AbstractOCP:DiscreteVariationalInequality4StateConstraints}
  (({\mathbf M}_h + \varrho \, {\mathbf K}_h) {\mathbf y}_{\varrho h} -
  \overline{\mathbf y}_h, {\mathbf{y}-{\mathbf{y}_{\varrho h}}}) \geq 0
\end{equation}
is satisfied for all ${\mathbf y} \in {\mathbb{R}}^M \leftrightarrow
y_h \in K_{s,h}$. We define the discrete Lagrange multiplier
$\mbox{\boldmath $\lambda$} :=
({\mathbf M}_h + \varrho \, {\mathbf K}_h) {\mathbf y}_{\varrho h} -
\overline{\mathbf y}_h$, and the index set of the active nodes,
$I_{s,\pm} := \{ k:=1,\ldots,M: y_k = g_{\pm,k} := g_\pm(x_k)\}$. With this
we then conclude the discrete complementarity conditions
\[
  \lambda_k = 0, \;
  g_{-,k} < y_k < g_{+,k} \; \mbox{for} \; k \not\in I_{s,\pm}, \;
  \lambda_k \leq 0 \; \mbox{for} \; k \in I_{s,+}, \;
  \lambda_k \geq 0 \; \mbox{for} \; k \in I_{s,-},
\]
which are equivalent to
\[
  \lambda_k = \min \{ 0 , \lambda_k + c (g_{+,k}-y_k) \} +
  \max \{ 0 , \lambda_k + c (g_{-,k}-y_k) \}, \quad c > 0.
\]
Hence we have to solve a system
${\mathbf F}({\mathbf y}_{\varrho h},\mbox{\boldmath $\lambda$})=
{\mathbf 0}$ of (non)linear equations
\begin{eqnarray*}
  {\mathbf F}_1({\mathbf y}_{\varrho h},\mbox{\boldmath $\lambda$})
  & = &
  ({\mathbf M}_h + \varrho \, {\mathbf K}_h) {\mathbf y}_{\varrho h} -
  \overline{\mathbf y}_h - \mbox{\boldmath $\lambda$} =
  {\mathbf 0}, \\
  {\mathbf F}_2({\mathbf y}_{\varrho h},\mbox{\boldmath $\lambda$})
  & = &
  \mbox{\boldmath $\lambda$} -
  \min \{ 0 , \mbox{\boldmath $\lambda$} +
  c ( {\mathbf g}_+ - {\mathbf y}) \} +
    \max \{ 0 , \mbox{\boldmath $\lambda$} + c
    ({\mathbf{g}_-- {\mathbf y}) \}
   = {\mathbf 0}},
\end{eqnarray*}
where the latter have to be considered componentwise. For any given
$\mbox{\boldmath $\lambda$}$ the system
$ {\mathbf F}_1({\mathbf y}_{\varrho h},\mbox{\boldmath $\lambda$}) =
{\mathbf 0}$  reads
\[
 ({\mathbf M}_h + \varrho \, {\mathbf K}_h) {\mathbf y}_{\varrho h} =
  \overline{\mathbf y}_h + \mbox{\boldmath $\lambda$} 
\]
which can be solved as in the unconstrained case, and it remains to
solve the nonlinear system
\begin{equation}\label{Eqn:Nonlinear state constraints}
  {\mathbf F}_2(({\mathbf M}_h + \varrho \, {\mathbf K}_h)^{-1}
  (\overline{\mathbf y}_h + \mbox{\boldmath $\lambda$}),
  \mbox{\boldmath $\lambda$})
  =
  {\mathbf{0}} .
\end{equation}
For the solution of \eqref{Eqn:Nonlinear state constraints} we can apply
a semi-smooth Newton method which is equivalent to a primal-dual active
set strategy, see, e.g., \cite{ChenNashedQi:2001,HintermuellerItoKunisch2003SJO,
  HintermuellerUlbrich:2004,ItoKunisch:2008}, and
\cite{GanglLoescherSteinbach2025CAMWA}. Instead of solving the nonlinear
system \eqref{Eqn:Nonlinear state constraints} we can solve the
variational inequality 
\eqref{Eqn:AbstractOCP:DiscreteVariationalInequality4StateConstraints}
by using multigrid methods, see \cite{GraeserKornhuber2009JCM} for
an overview of related methods. This will be a topic of future research.
When considering control constraints we replace
$K_s$ by 
\begin{eqnarray*}
  K_c & := & \Big \{ y \in H^1_0(\Omega) : \langle f_- ,
             \phi \rangle_{L^2(\Omega)}
             \leq \langle \nabla y , \nabla \phi \rangle_{L^2(\Omega)} \leq
             \langle f_+ , \phi \rangle_{L^2(\Omega)} \\
      && \hspace*{3.5cm} \mbox{for all} \; \phi \in H^1_0(\Omega) \;
         \mbox{with} \; \phi \geq 0 \; \mbox{a.e. in} \; \Omega \Big \},
\end{eqnarray*}
where we assume $f_\pm \in L^2(\Omega)$. For a more detailed discussion
we refer to \cite{GanglLoescherSteinbach2025CAMWA},
see also \cite{GongTan2025JSC}.

\subsection{Numerical results}\label{Subsection:NumericalResults}
We first reconsider the 1d examples from the introduction. 
Since we can analytically solve all of these 1d OCPs, we can easily verify 
the numerical results for both the $L^2$ and the $H^{-1}$ regularization 
with respect to the accuracy of the approximation of the computed finite 
element state to the target and the approximation of the cost of the control.
Furthermore, we numerically study three multi-dimensional examples with targets
possessing different features. The first two examples are taken from 
\cite{ClarsonKunisch:2011ESAIM:COCV}, where beside the standard $L^2$
regularization also other regularizations including measure and BV
regularizations are studied both theoretically and numerically. These two
benchmark examples from \cite{ClarsonKunisch:2011ESAIM:COCV} are given in
the two-dimensional (2d) computational domain $\Omega = (-1,1)^2$. Here we
also consider the three-dimensional counterparts given in $\Omega = (-1,1)^3$.
Finally, we numerically study a three-dimensional (3d) example with a more
complicated discontinuous target that was already used in our paper 
\cite{LangerLoescherSteinbachYang:2024CAMWA} for numerical tests. In this
example, the target is zero with exception of several small inclusions which
are nothing but hot spots. For the three multi-dimensional examples, 
we always us the $H^{-1}$ regularization, which is sometimes also called
energy regularization, as described in
Subsection~\ref{Subsection:Application:EnergyRegularization}.
The finite element discretization, the control recovering, the solvers,
and the nested iteration procedure also follow the description as given 
in Subsections~\ref{Subsection:Application:EnergyRegularization},
\ref{Subsection:Application:EnergyRegularisation:ControlRecovering},
and \ref{Subsection:Application:EnergyRegularisation:NestedIteration}.
In particular, we solve the system~\eqref{Eqn:Application:System:Sy=rhs}
by pcg preconditioned by the lumped mass matrix
$\mathbf{C}_h = \mbox{lump}(\mathbf{M}_h)$. In the non-nested version, the
pcg iterations are stopped as soon as the 
$\mathbf{S}_{\varrho h}^T \mathbf{C}_h^{-1} \mathbf{S}_{\varrho h}$ energy norm
of the initial iteration error 
$\mathbf{e}_h^0 = \mathbf{u}_{\varrho h} - \mathbf{u}_{\varrho h}^0$
is reduced by a factor of $10^6$. In terms of the residual 
$\mathbf{r}_h^n = \mathbf{S}_{\varrho h} \mathbf{e}_h^n$,
the stopping criterion can be written in the form
\begin{equation*}
  (\mathbf{C}_h^{-1}\mathbf{r}_h^n,\mathbf{r}_h^n)^{1/2}
  \, \le \, 10^{-6} \, (\mathbf{C}_h^{-1}\mathbf{r}_h^0,\mathbf{r}_h^0)^{1/2}. 
\end{equation*}
We always use a zero initial guess in the nonnested iterations while, in the
nested iteration procedure, the initial guess is interpolated from the coarser
mesh and the iteration is stopped when the discretization error is reached.

\subsubsection{Numerical justification of the theoretical results in 1d}
\label{Subsubsection:NumericalResults:1d}
We reconsider the examples from the introduction, especially the smooth
target \eqref{Eqn:Introduction:target-u1} and the discontinuous target
\eqref{Eqn:Introduction:target-u3} for which we computed the exact solutions
$y_{1,\varrho}$ and $y_{3,\varrho}$ for \eqref{Eqn:Application:VF} and
\eqref{Eqn:Application:L2 BiLaplace} depending on $\varrho>0$ explicitely.

Let us consider piecewise linear finite elements, defined on the decomposition
of $(0,1)$ into equidistant nodes $x_k = k/N$, $k=0,\ldots,N$, for some
$N\in\mathbb{N}$. Given the mesh size $h=1/N$, the optimal choice of the
regularization parameter is $\varrho = \varrho_{H^{-1}} = h^2$ and
$\varrho = \varrho_{L^2}=h^4$ in the case of the $H^{-1}$ regularization and
$L^2$ regularization, respectively. Now let us fix $\varrho=2^{-\ell}$ for
some $\ell\in\mathbb{N}$ and choose two decompositions of mesh sizes
$h_{H^{-1}}$ and $h_{L^2}$, such that $\varrho = h_{H^{-1}}^2 = h_{L^2}^4$.
On these meshes we compute the corresponding states $y_{i,\varrho h_{H^{-1}}}$
and $y_{i, \varrho h_{L^2}}$, $i=1,3$ of the $H^{-1}$ regularization and the
$L^2$ regularization, solving \eqref{Eqn:Application:FEM} and
\eqref{Eqn:Application:L2 System}, respectively. The states are plotted in
Figure~\ref{Fig:states-and-controls}. We clearly observe, that the
reconstructed states approximate the exact states very well. We note that
the larger distance of the target to the state reconstructed using the $L^2$
regularization with respect to the state reconstructed by the $H^{-1}$
regularization does not stem from the state being computed on a coarser mesh,
but is rather inherited from the continuous problem. This is further supported
by Figure~\ref{Fig:error-discrete-and-continuous}, where the exact errors
$\norm{\overline{y}-y_\varrho}_{L^2(0,1)}$ are plotted against the finite
element errors $\norm{\overline{y}-y_{\varrho h}}_{L^2(0,1)}$.  

In a post processing step, we now compute the reconstruction of the control
$\widetilde{u}_{\varrho h}\in X_h$ on the primal mesh and
$\widetilde{u}_{\varrho h}^d\in U_h^d$ on the
dual mesh, solving \eqref{Eqn:Application:Control Reconstruction}. The results
are presented in Figure~\ref{Fig:states-and-controls}. 
In the one dimensional case $\Omega = (0,1)$, we can compute the exact cost,
by introducing $\widetilde{y}_\varrho\in H^1_0(0,1)$ given as 
\begin{equation*}
  \widetilde{y}_\varrho(x) = \int_0^1 G(x,y)\widetilde{u}_{\varrho h}(y)\, dy, 
\end{equation*}
where
\begin{equation*}
  G(x,y) = \begin{cases}
    y \, (1-x),& y\in(0,x)\\
    x \, (1-y), & y\in(x,1),
  \end{cases}
\end{equation*}
denotes the Greens function, i.e., $-\widetilde{y}_\varrho'' =
\widetilde{u}_{\varrho h}$
in $(0,1)$. Then we compute 
\begin{align*}
  \norm{\widetilde{u}_{\varrho h}}_{H^{-1}(0,1)}
  & = \sup_{0\neq v\in H^1_0(0,1)}
    \frac{\langle \widetilde{u}_{\varrho h},v\rangle_{L^2(0,1)} }{\norm{v'}_{L^2(0,1)}}
    = \sup_{0\neq v\in H^1_0(0,1)}\frac{\langle -\widetilde{y}_\varrho'',
    v\rangle_{L^2(0,1)} }{\norm{v'}_{L^2(0,1)}}\\
  & = \sup_{0\neq v\in H^1_0(0,1)}
    \frac{\langle \widetilde{y}_\varrho',v'\rangle_{L^2(0,1)}}
    {\norm{v'}_{L^2(0,1)}}
    = \norm{\widetilde{y}_\varrho'}_{L^2(0,1)} =
    \sqrt{\langle \widetilde{u}_{\varrho h},
    \widetilde{y}_\varrho\rangle_{L^2(0,1)} }.  
\end{align*} 
The cost of the different control reconstructions is compared with the exact
cost $\norm{u_{\varrho}}_{H^{-1}(0,1)}$ and $\norm{u_\varrho}_{L^2(0,1)}$ in 
Figures~\ref{Fig:cost-discrete-and-continuous-H} and
\ref{Fig:cost-discrete-and-continuous-L}, respectively.
We note that all the computations align very well and fit the exact cost.
Although it might seem that the $L^2$ regularization comes with lower cost,
we again stress that for fixed $\varrho >0$ the error
$\norm{\overline{y}-y_{\varrho}}_{L^2(0,1)}$ for the $L^2$ regularization is
larger compared to the $H^{-1}$ regularization. Thus, to achieve the same
level of accuracy one needs to consider a smaller regularization parameter
$\varrho$ leading to the same cost for the $L^2$ regularization. Thus, for
the implementation the $H^{-1}$ regularization is beneficial, as the
regularization parameter does not need to be chosen too small. Hence, if
one is interested in the $L^2$ cost, we propose to compute the state via the
$H^{-1}$ regularization, then reconstruct the control and compute the $L^2$
cost and check if it is still below the threshold given by the application.   

\begin{figure}[htbp!]
  \centering
  \begin{subfigure}{0.45\textwidth}
    \includegraphics[width=\textwidth]
    {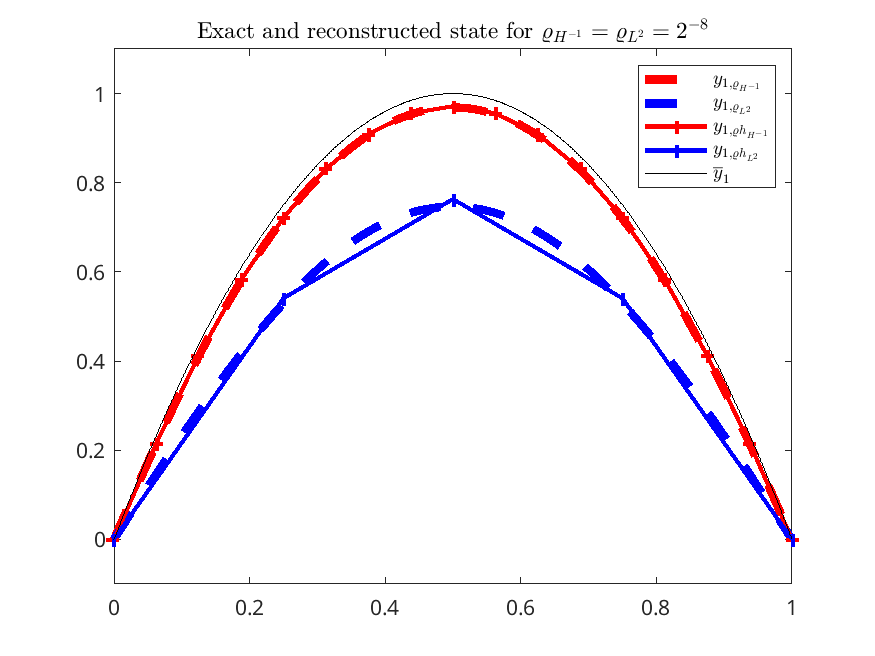}
    \caption{$\overline{y}_1$, $y_{1,\varrho}$ and $y_{1,\varrho h}$}
  \end{subfigure}
  \hfill
  \begin{subfigure}{0.45\textwidth}
    \includegraphics[width=\textwidth]
    {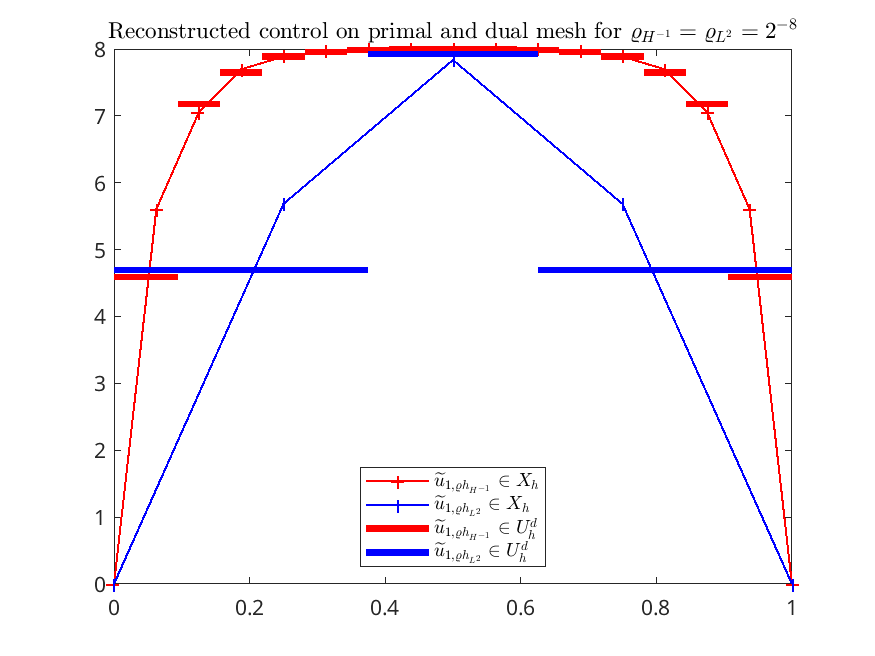}
    \caption{$\widetilde{u}_{1,\varrho h}$}
  \end{subfigure}\\
  \begin{subfigure}{0.45\textwidth}
    \includegraphics[width=\textwidth]
    {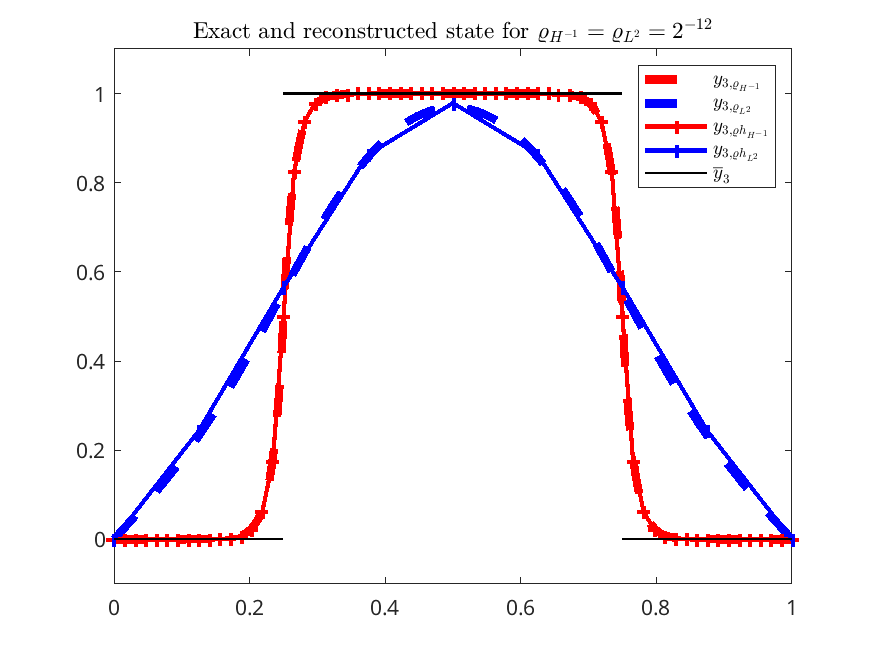}
    \caption{$\overline{y}_3$, $y_{3,\varrho}$ and $y_{3,\varrho h}$}
  \end{subfigure}
  \hfill
  \begin{subfigure}{0.45\textwidth}
    \includegraphics[width=\textwidth]
    {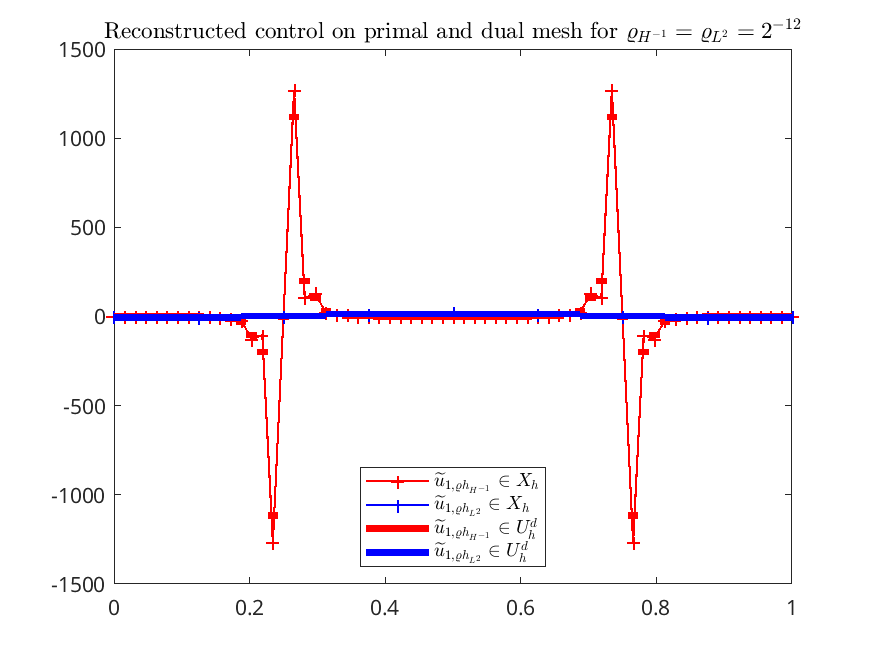}
    \caption{$\widetilde{u}_{3,\varrho h}$}
  \end{subfigure}
  \caption{Targets $\overline{y}_1$, $\overline{y}_3$, (exact) reconstructed
    states using the $H^{-1}$ and $L^2$ regularization $y_{i,\varrho}$ and
    $y_{i,\varrho h}$, respectively. And reconstruction of the controls
    $\widetilde{u}_{i,\varrho h}$ on the primal and dual mesh.}
  \label{Fig:states-and-controls} 
\end{figure}

\begin{figure}[htbp!]
  \centering
    \begin{tikzpicture}[scale=0.45]
      \begin{axis}[
        xmode = log,
        ymode = log,
        xlabel=$\varrho$,
        ylabel=$\| \overline y - y_{\varrho (h)}\|_{L^2(\Omega)}$,
        legend style={font=\tiny}, legend pos = outer north east]
        \addplot[mark = square,red] table [col sep=
        &, y=error_ex, x=rho]{tables/table_cost_energy_target1.dat};
        \addlegendentry{$\|\overline{y}_1-y_{1,\varrho_{H^{-1}}}\|_{L^2(\Omega)}$}
        \addplot[mark = o,red] table [col sep=
        &, y=error, x=rho]{tables/table_cost_energy_target1.dat};
        \addlegendentry{$\|\overline{y}_1-y_{1,\varrho h_{H^{-1}}}\|_{L^2(\Omega)}$}
        \addplot[
        domain = 2^(-12):2^(-16),
        samples = 2,
        dashed,
        thin,
        red,
        mark = square] {5*x};
        \addlegendentry{$\varrho$}
        \addplot[mark = square,blue] table [col sep=
        &, y=error_ex, x=rho]{tables/table_cost_L2_target1.dat};
        \addlegendentry{$\|\overline{y}_1-y_{1,\varrho_{L^2}}\|_{L^2(\Omega)}$}
        \addplot[mark = o,blue] table [col sep=
        &, y=error, x=rho]{tables/table_cost_L2_target1.dat};
        \addlegendentry{$\|\overline{y}_1-y_{1,\varrho h_{L^2}}\|_{L^2(\Omega)}$}
        \addplot[
        domain = 2^(-12):2^(-16),
        samples = 2,
        dashed,
        thin,
        blue,
        mark = square] {3.5*x^(2.5/4)};
        \addlegendentry{$\varrho^{2.5/4}$}
        \addplot[mark = square*,red] table [col sep=
        &, y=error_ex, x=rho]{tables/table_cost_energy_target3.dat};
        \addlegendentry{$\|\overline{y}_3-y_{3,\varrho_{H^{-1}}}\|_{L^2(\Omega)}$}
        \addplot[mark = diamond*,red] table [col sep=
        &, y=error, x=rho]{tables/table_cost_energy_target3.dat};
        \addlegendentry{$\|\overline{y}_3-y_{3,\varrho h_{H^{-1}}}\|_{L^2(\Omega)}$}
        \addplot[
        domain = 2^(-12):2^(-16),
        samples = 2,
        dashed,
        thin,
        red,
        mark = diamond] {x^(0.25)};
        \addlegendentry{$\varrho^{0.25}$}
        \addplot[mark = square*,blue] table [col sep=
        &, y=error_ex, x=rho]{tables/table_cost_L2_target3.dat};
        \addlegendentry{$\|\overline{y}_3-y_{3,\varrho_{L^2} }\|_{L^2(\Omega)}$}
        \addplot[mark = diamond*,blue] table [col sep=
        &, y=error, x=rho]{tables/table_cost_L2_target3.dat};
        \addlegendentry{$\|\overline{y}_3-y_{3,\varrho h_{L^2}}\|_{L^2(\Omega)}$}
        \addplot[
        domain = 2^(-12):2^(-16),
        samples = 2,
        dashed,
        thin,
        blue,
        mark = diamond] {1*x^(0.25/2)};
        \addlegendentry{$\varrho^{0.5/4}$}  
      \end{axis}
    \end{tikzpicture}
    \caption{Errors $\norm{\overline{y}-y_\varrho}_{L^2(\Omega)}$ and
      $\norm{\overline{y}-y_{\varrho h}}_{L^2(\Omega)}$ for the different
      targets $\overline{y}_1$ and $\overline{y}_3$ and for the $H^{-1}$
      and the $L^2$ regularization. }
      \label{Fig:error-discrete-and-continuous}
    \end{figure}
    
\begin{figure}[htbp!]
  \centering
  \begin{subfigure}[t]{0.45\textwidth}
    \centering
    \begin{tikzpicture}[scale=0.45]
      \begin{axis}[
        xmode = log,
        ymode = log,
        xlabel=$\varrho$,
        ylabel=$\| \widetilde{u}_{\varrho (h)}\|_{H^{-1}(\Omega)}$,
        legend style={font=\tiny}, legend pos = outer north east]
        \addplot[mark = o,red] table [col sep=
        &, y=costenergy_H1ex, x=rho]{tables/table_cost_energy_target1.dat};
        \addlegendentry{$\|u_{1,\varrho_{H^{-1}}}\|_{H^{-1}(\Omega)}$}
        \addplot[mark = o,blue] table [col sep=
        &, y=costL2_H1ex, x=rho]{tables/table_cost_L2_target1.dat};
        \addlegendentry{$\|u_{1,\varrho_{L^2}}\|_{H^{-1}(\Omega)}$}
        \addplot[mark = square,red] table [col sep=
        &, y=costenergy_primal_H1, x=rho]{tables/table_cost_energy_target1.dat};
        \addlegendentry{$\|\widetilde{u}_{1,\varrho h_{H^{-1}}}\|_{H^{-1}(\Omega)}$}
        \addplot[mark = square,blue] table [col sep=
        &, y=costL2_primal_H1, x=rho]{tables/table_cost_L2_target1.dat};
        \addlegendentry{$\|\widetilde{u}_{1,\varrho h_{L^2}}\|_{H^{-1}(\Omega)}$}
        \addplot[mark = square*,red] table [col sep=
        &, y=costenergy_dual_H1, x=rho]{tables/table_cost_energy_target1.dat};
        \addlegendentry{$\|\widetilde{u}_{1,\varrho h_{H^{-1}}}^d\|_{H^{-1}(\Omega)}$}
        \addplot[mark = square*,blue] table [col sep=
        &, y=costL2_dual_H1, x=rho]{tables/table_cost_L2_target1.dat};
        \addlegendentry{$\|\widetilde{u}_{1,\varrho h_{L^2}}^d\|_{H^{-1}(\Omega)}$}
        \addplot[mark = diamond*,brown] table [col sep=
        &, y=yrhoH1, x=rho]{tables/table_cost_energy_target1.dat};
        \addlegendentry{$\|\nabla y_{\varrho h_{H^{-1}}}\|_{L^2(\Omega)}$}
        \end{axis}
    \end{tikzpicture}
    \caption{Cost $\|u_{1,\varrho}\|_{H^{-1}(\Omega)}$ and
      $\|\widetilde{u}_{1,\varrho h}\|_{H^{-1}(\Omega)}$}
  \end{subfigure}
  \hfill
  \begin{subfigure}[t]{0.45\textwidth}
    \centering
    \begin{tikzpicture}[scale=0.45]
      \begin{axis}[
        xmode = log,
        ymode = log,
        xlabel=$\varrho$,
        ylabel=$\| u_{\varrho (h)}\|_{H^{-1}(\Omega)}$,
        legend style={font=\tiny}, legend pos = outer north east]
        \addplot[mark = o,red] table [col sep=
        &, y=costenergy_H1ex, x=rho]{tables/table_cost_energy_target3.dat};
        \addlegendentry{$\|u_{3,\varrho_{H^{-1}}}\|_{H^{-1}(\Omega)}$}
        \addplot[mark = o,blue] table [col sep=
        &, y=costL2_H1ex, x=rho]{tables/table_cost_L2_target3.dat};
        \addlegendentry{$\|u_{3,\varrho_{L^2}}\|_{H^{-1}(\Omega)}$}
        \addplot[mark = square,red] table [col sep=
        &, y=costenergy_primal_H1, x=rho]{tables/table_cost_energy_target3.dat};
        \addlegendentry{$\|\widetilde{u}_{3,\varrho h_{H^{-1}}}\|_{H^{-1}(\Omega)}$}
        \addplot[mark = square,blue] table [col sep=
        &, y=costL2_primal_H1, x=rho]{tables/table_cost_L2_target3.dat};
        \addlegendentry{$\|\widetilde{u}_{3,\varrho h_{L^2}}\|_{H^{-1}(\Omega)}$}
        \addplot[mark = square*,red] table [col sep=
        &, y=costenergy_dual_H1, x=rho]{tables/table_cost_energy_target3.dat};
        \addlegendentry{$\|\widetilde{u}_{3,\varrho h_{H^{-1}}}^d\|_{H^{-1}(\Omega)}$}
        \addplot[mark = square*,blue] table [col sep=
        &, y=costL2_dual_H1, x=rho]{tables/table_cost_L2_target3.dat};
        \addlegendentry{$\|\widetilde{u}_{3,\varrho h_{L^2}}^d\|_{H^{-1}(\Omega)}$}
        \addplot[mark = diamond*,brown] table [col sep=
        &, y=yrhoH1, x=rho]{tables/table_cost_energy_target3.dat};
        \addlegendentry{$\|\nabla y_{\varrho h_{H^{-1}}}\|_{L^2(\Omega)}$}
        \addplot[
        domain = 2^(-12):2^(-16),
        samples = 2,
        dashed,
        thin,
        red,
        mark = diamond*,] {0.9*x^(-0.25)};
        \addlegendentry{$\varrho^{(0.5-1)/2}$}
        \addplot[
        domain = 2^(-12):2^(-16),
        samples = 2,
        dashed,
        thin,
        blue,
        mark = diamond*,] {0.9*x^(-0.125)};
        \addlegendentry{$\varrho^{(0.5-1)/4}$}
        \end{axis}
    \end{tikzpicture}
    \caption{Cost $\|u_{3,\varrho}\|_{H^{-1}(\Omega)}$ and
      $\|\widetilde{u}_{3,\varrho h}\|_{H^{-1}(\Omega)}$}
  \end{subfigure}
  \caption{Cost $\norm{u_\varrho}_{H^{-1}(\Omega)}$ and cost of the
    reconstructed control $\norm{\widetilde{u}_{\varrho h}}_{H^{-1}(\Omega)}$ for the
    targets $\overline{y}_1$ and $\overline{y}_3$ when choosing
    $\varrho = \varrho_{H^{-1}}=h_{H^{-1}}^2 = \varrho_{L^2}=h_{L^2}^4$. }
        \label{Fig:cost-discrete-and-continuous-H}
\end{figure}

\begin{figure}[htbp!]
  \centering
  \begin{subfigure}[t]{0.45\textwidth}
    \centering
    \begin{tikzpicture}[scale=0.45]
      \begin{axis}[
        xmode = log,
        ymode = log,
        xlabel=$\varrho$,
        ylabel=$\| u_{\varrho (h)}\|_{L^2(\Omega)}$,
        legend style={font=\tiny}, legend pos = outer north east]
        \addplot[mark = o,red] table [col sep=
        &, y=costenergy_L2ex, x=rho]{tables/table_cost_energy_target1.dat};
        \addlegendentry{$\|u_{1,\varrho_{H^{-1}}}\|_{L^2(\Omega)}$}
        \addplot[mark = o,blue] table [col sep=
        &, y=costL2_L2ex, x=rho]{tables/table_cost_L2_target1.dat};
        \addlegendentry{$\|u_{1,\varrho_{L^2}}\|_{L^2(\Omega)}$}
        \addplot[mark = square,red] table [col sep=
        &, y=costenergy_primal_L2, x=rho]{tables/table_cost_energy_target1.dat};
        \addlegendentry{$\|\widetilde{u}_{1,\varrho h_{H^{-1}}}\|_{L^2(\Omega)}$}
        \addplot[mark = square,blue] table [col sep=
        &, y=costL2_primal_L2, x=rho]{tables/table_cost_L2_target1.dat};
        \addlegendentry{$\|\widetilde{u}_{1,\varrho h_{L^2}}\|_{L^2(\Omega)}$}
        \addplot[mark = square*,red] table [col sep=
        &, y=costenergy_dual_L2, x=rho]{tables/table_cost_energy_target1.dat};
        \addlegendentry{$\|\widetilde{u}_{1,\varrho h_{H^{-1}}}^d\|_{L^2(\Omega)}$}
        \addplot[mark = square*,blue] table [col sep=
        &, y=costL2_dual_L2, x=rho]{tables/table_cost_L2_target1.dat};
        \addlegendentry{$\|\widetilde{u}_{1,\varrho h_{L^2}}^d\|_{L^2(\Omega)}$}
        \end{axis}
    \end{tikzpicture}
    \caption{Cost $\|u_{1,\varrho}\|_{L^2(\Omega)}$ and
      $\|\widetilde{u}_{1,\varrho h}\|_{L^2(\Omega)}$}
  \end{subfigure}
  \hfill
  \begin{subfigure}[t]{0.45\textwidth}
    \centering
    \begin{tikzpicture}[scale=0.45]
      \begin{axis}[
        xmode = log,
        ymode = log,
        xlabel=$\varrho$,
        ylabel=$\| u_{\varrho (h)}\|_{L^2(\Omega)}$,
        legend style={font=\tiny}, legend pos = outer north east]
        \addplot[mark = o,red] table [col sep=
        &, y=costenergy_L2ex, x=rho]{tables/table_cost_energy_target3.dat};
        \addlegendentry{$\|u_{3,\varrho_{H^{-1}}}\|_{L^2(\Omega)}$}
        \addplot[mark = o,blue] table [col sep=
        &, y=costL2_L2ex, x=rho]{tables/table_cost_L2_target3.dat};
        \addlegendentry{$\|u_{3,\varrho_{L^2}}\|_{L^2(\Omega)}$}
        \addplot[mark = square,red] table [col sep=
        &, y=costenergy_primal_L2, x=rho]{tables/table_cost_energy_target3.dat};
        \addlegendentry{$\|\widetilde{u}_{3,\varrho h_{H^{-1}}}\|_{L^2(\Omega)}$}
        \addplot[mark = square,blue] table [col sep=
        &, y=costL2_primal_L2, x=rho]{tables/table_cost_L2_target3.dat};
        \addlegendentry{$\|\widetilde{u}_{3,\varrho h_{L^2}}\|_{L^2(\Omega)}$}
        \addplot[mark = square*,red] table [col sep=
        &, y=costenergy_dual_L2, x=rho]{tables/table_cost_energy_target3.dat};
        \addlegendentry{$\|\widetilde{u}_{3,\varrho h_{H^{-1}}}^d\|_{L^2(\Omega)}$}
        \addplot[mark = square*,blue] table [col sep=
        &, y=costL2_dual_L2, x=rho]{tables/table_cost_L2_target3.dat};
        \addlegendentry{$\|\widetilde{u}_{3,\varrho h_{L^2}}^d\|_{L^2(\Omega)}$}
        \addplot[
        domain = 2^(-12):2^(-16),
        samples = 2,
        dashed,
        thin,
        red,
        mark = diamond*,] {x^(-0.75)};
        \addlegendentry{$\varrho^{0.5/2-1}$}
        \addplot[
        domain = 2^(-12):2^(-16),
        samples = 2,
        dashed,
        thin,
        blue,
        mark = diamond*,] {0.65*x^(-0.375)};
        \addlegendentry{$\varrho^{\tfrac{1}{2}\left(\tfrac{0.5}{2}-1\right)}$}
        \end{axis}
    \end{tikzpicture}
    \caption{Cost $\|u_{3,\varrho}\|_{L^2(\Omega)}$ and
      $\|\widetilde{u}_{3,\varrho h}\|_{L^2(\Omega)}$}
  \end{subfigure}
  \caption{Cost $\norm{u_\varrho}_{L^2(\Omega)}$ and cost of the reconstructed
    control $\norm{\widetilde{u}_{\varrho h}}_{L^2(\Omega)}$ for the targets
    $\overline{y}_1$ and $\overline{y}_3$ when choosing
    $\varrho = \varrho_{H^{-1}}=h_{H^{-1}}^2 = \varrho_{L^2}=h_{L^2}^4$.  }
        \label{Fig:cost-discrete-and-continuous-L}
\end{figure}
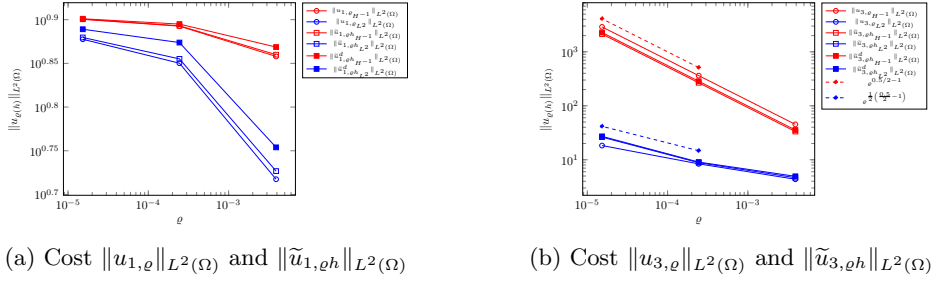

\subsubsection{Peak}
\label{Subsubsection:NumericalResults:Peak}
We now consider the smooth target
\begin{equation*}
\overline{y}(x) = e^{-50[(x_1 - 0.2)^2 + (x_2 + 0.1)^2]}
\end{equation*}
with $x=(x_1,x_2) \in \Omega = (-1,1)^2 \subset \mathbb{R}^2$;
see \cite{ClarsonKunisch:2011ESAIM:COCV}. We note that this target does not
vanish on the boundary $\partial \Omega$ of $\Omega$. 
Therefore, it does not belong to the state space $Y=H^1_0(\Omega)$.
The violation of the homogeneous boundary conditions may cause 
boundary layers which affect the convergence of the finite 
element approximation to the state. Furthermore, we consider a 3d version
of this example with the target function
\begin{equation*}
\overline{y}(x) = e^{-50[(x_1 - 0.2)^2 + (x_2 + 0.1)^2 + (x_3+0.3)^2]}
\end{equation*}
with $x=(x_1,x_2,x_3) \in \Omega = (-1,1)^3 \subset \mathbb{R}^3$.
Table~\ref{Tab:NumericalResults:solver_comparison_of_nonest_nested_peak_uniform}
presents the numerical results for the nonnested and nested iteration regimes.
We obtain the full experimental order of convergence (eoc) as we would get 
for smooth targets satisfying homogeneous Dirichlet conditions. The nested
iteration procedure produces approximations with the same accuracy 
as the nonnested iteration but several times faster. 

\begin{table}[ht]
  {\small
    \begin{tabular}{|l|r|l|r|l|l|r|r|}
      \hline
      \multirow{2}{*}{$\ell$}&\multirow{2}{*}{\#Dofs}&
      \multicolumn{3}{c|}{Non-nested}&\multicolumn{3}{c|}{Nested} \\  \cline{3-8}
      &&error&eoc &its (time) &error&eoc & its (time)\\
      \hline
      $1$&$4,913$&$3.34$e$-2$&$-$&$10$ ($2.1$e$-3$ s)&$3.34$e$-2$& $-$ &$10$($2.1$e$-3$ s)\\
      $2$&$35,937$&$1.25$e$-2$&$1.41$&$11$ ($3.8$e$-3$ s)&$1.28$e$-2$ &$1.39$ & $2$ ($8.5$e$-4$ s)\\
      $3$&$274,625$&$3.48$e$-3$&$1.85$&$11$ ($5.5$e$-3$ s)&$3.74$e$-3$&$1.77$ & $2$ ($1.2$e$-3$ s)\\
      $4$&$2,146,689$&$8.87$e$-4$&$1.97$&$11$ ($2.9$e$-2$ s)&$9.91$e$-4$&$1.92$ & $2$ ($6.5$e$-3$ s)\\
      $5$&$16,974,593$&$2.22$e$-4$&$2.00$&$11$ ($2.1$e$-1$ s)&$2.54$e$-4$&$1.96$ & $2$ ($4.7$e$-2$ s)\\
      $6$&$135,005,697$&$5.56$e$-5$&$2.00$&$11$ ($1.5$e$-0$ s)&$6.43$e$-5$&$1.98$ & $2$ ($3.7$e$-1$ s)\\
      \hline
    \end{tabular}
    \caption{Peak ($d=3$): Comparison of Nonnested and Nested iterations: $L^2$ error, 
      experimental order of convergence eoc,
      number its of pcg iterations, and computational time (time) in seconds 
      on uniform mesh refinements,
      using $256$ cores. 
    } 
    \label{Tab:NumericalResults:solver_comparison_of_nonest_nested_peak_uniform}
  }
\end{table}

\subsubsection{Pedestal}
\label{Subsubsection:NumericalResults:Pedestal}
The next example is also inspired by a 2d example used in the numerical
experiments presented in \cite{ClarsonKunisch:2011ESAIM:COCV}. The target
\[
\overline{y}(x) =
\begin{cases}
	1 & \mbox{if}\;  x \in (-1/2,1/2)^d, \\
	0 & \mbox{else},
\end{cases}
\]
is nothing than a pedestal with a plateau of the high $1$. This target
$\overline{y}$ is discontinuous, and, therefore, it does not belong 
the state space $Y=H^1_0(\Omega)$, but to the spaces $H^s(\Omega)$ 
with $s < 1/2$. Thus, we can only expect reduced convergence rates 
in the case of uniform mesh refinement. Table
\ref{Tab:NumericalResults:solver_comparison_of_nonest_nested_pedestal_uniform}
presents some numerical results for the three-dimensional case, i.e., $d=3$.
We again compare the nonnested and nested iteration regimes. Since the target
does only belong to $H^s(\Omega)$ with $s < 0.5$, we only see the reduced eoc
of about $0.5$. The nested iteration procedure again produces approximations
with the same accuracy as the nonnested iteration but several times faster. 

\begin{table}[ht]
  {\small
    \begin{tabular}{|l|r|l|r|l|l|r|r|}
      \hline
      \multirow{2}{*}{$\ell$}&\multirow{2}{*}{\#Dofs}&
      \multicolumn{3}{c|}{Non-nested}&\multicolumn{3}{c|}{Nested} \\  \cline{3-8}
      &&error&eoc &Its (Time) &error&eoc & Its (Time)\\
      \hline
      $1$&$4,913$&$3.66$e$-1$&$-$&$10$ ($2.2$e$-3$ s)&$3.66$e$-0$& $-$ &$10$($2.2$e$-3$ s)\\
      $2$&$35,937$&$2.67$e$-1$&$0.45$&$11$ ($3.2$e$-3$ s)&$2.73$e$-1$ &$0.43$ & $1$ ($5.2$e$-4$ s)\\
      $3$&$274,625$&$1.87$e$-1$&$0.52$&$11$ ($5.5$e$-3$ s)&$1.93$e$-1$&$0.50$ & $1$ ($7.5$e$-4$ s)\\
      $4$&$2,146,689$&$1.31$e$-1$&$0.51$&$11$ ($2.8$e$-2$ s)&$1.35$e$-1$&$0.52$ & $1$ ($4.5$e$-3$ s)\\
      $5$&$16,974,593$&$9.24$e$-2$&$0.51$&$11$ ($2.1$e$-1$ s)&$9.43$e$-2$&$0.52$ & $1$ ($3.5$e$-2$ s)\\
      $6$&$135,005,697$&$6.52$e$-2$&$0.50$&$11$ ($1.5$e$-0$ s)&$6.62$e$-2$&$0.51$ & $1$ ($2.5$e$-1$ s)\\
      \hline
    \end{tabular}
    \caption{Pedestal ($d=3$): Comparison of Non-nested and Nested iterations: $L^2$ error, 
      experimental order of convergence eoc,
      number its of pcg iterations, and computational time (time) in seconds 
      on uniform mesh refinements, using $256$ cores.  
    } 
    \label{Tab:NumericalResults:solver_comparison_of_nonest_nested_pedestal_uniform}
  }
\end{table}

\subsubsection{Inclusions}\label{Subsubsection:NumericalResults:Inclusions}
Finally, we consider the  target 
\[
\overline{y}(x) =
\begin{cases}
  1 & \mbox{if}\; (x_1-0.5)^2+(x_2-0.5)^2+(x_3-0.5)^2\leq 0.05^2, \\
  2 & \mbox{if}\; (x_1-0.5)^2+(x_2-0.25)^2+(x_3-0.75)^2\leq 0.0625^2,\\
  3 & \mbox{if}\; (x_1-0.5)^2+(x_2-0.75)^2+(x_3-0.75)^2\leq 0.0625^2,\\
  4 & \mbox{if}\; (x_1-0.5)^2+(x_2-0.75)^2+(x_3-0.25)^2\leq 0.075^2,\\
  5 & \mbox{if}\; x_1\in[0.25, 0.75] \;\mbox{and}\; x_2\in [0.45,0.5]\;\mbox{and}\; x_3\in[0.125, 0.375],\\
  6 & \mbox{if}\; (x_1-0.5)^2+(x_2-0.25)^2+(x_3-0.25)^2\leq 0.0625^2,\\
  0 & \mbox{else},
\end{cases}
\]
with  piecewise constant, positive values inside small inclusions in the
3d domain $\Omega=(0,1)^3$. Outside of these hot spots the target is zero.
Again, we expect interface boundary layers and reduced convergence 
rate in the case of uniform mesh refinement.
Table~\ref{Tab:NumericalResults:solver_comparison_of_nonest_nested_inclusions_uniform} 
provides the numerical results for non-nested and nested iterations. 
First we observe that the eoc is about $0.5$ that perfectly corresponds
to the regularity of the target. The nested iteration procedure again
produces approximations with the same accuracy as the non-nested iteration
but several times faster. More precisely, at the finest refinement
level $\ell = 6$ with $135,005,697$ unknowns ({\#Dofs}),
the nested iteration reaches the same accuracy (error) as the nonnested
iteration within $0.18$ seconds in comparison with $1.50$ seconds needed for
the nonnested iteration.
We note that we stopped the non-nested iteration at the relative accuracy
$10^{-6}$. This can be relaxed, and the relative accuracy can be adapted
to the discretization error.
Figure~\ref{Fig:NumericalResults:ComputedState} shows the computed finite
element approximation to the state at level $\ell= 4$.

\begin{table}[ht]
  {\small
    \begin{tabular}{|l|r|l|r|l|l|r|r|}
      \hline
      \multirow{2}{*}{$\ell$}&\multirow{2}{*}{\#Dofs}&
      \multicolumn{3}{c|}{Non-nested}&\multicolumn{3}{c|}{Nested} \\  \cline{3-8}
      &&error&eoc &Its (Time) &error&eoc & Its (Time)\\
      \hline
      $1$&$4,913$&$3.50$e$-1$&$-$&$22$ ($5.3$e$-3$ s)&$3.50$e$-1$& $-$ &$22$ ($5.1$e$-3$ s)\\
      $2$&$35,937$&$3.26$e$-1$&$0.10$&$25$ ($6.8$e$-3$ s)&$3.31$e$-1$ &$0.08$ & $1$ ($6.4$e$-4$ s)\\
      $3$&$274,625$&$2.35$e$-1$&$0.47$&$24$ ($9.3$e$-3$ s)&$2.35$e$-1$&$0.50$ & $2$ ($1.0$e$-3$ s)\\
      $4$&$2,146,689$&$1.60$e$-1$&$0.55$&$24$ ($2.2$e$-2$ s)&$1.61$e$-1$&$0.55$ & $2$ ($2.6$e$-3$ s)\\
      $5$&$16,974,593$&$1.13$e$-1$&$0.51$&$24$ ($2.2$e$-1$ s)&$1.12$e$-1$&$0.52$ & $2$ ($2.7$e$-2$ s)\\
      $6$&$135,005,697$&$7.95$e$-2$&$0.50$&$24$ ($1.5$e$-0$ s)&$7.95$e$-2$&$0.50$ & $2$ ($1.8$e$-1$ s)\\
      \hline
    \end{tabular}
    \caption{Inclusions ($d=3$): Comparison of non-nested and nested iterations: $L^2$ error, 
      experimental order of convergence eoc,
      number its of pcg iterations, and computational time (time) in seconds 
      on uniform mesh refinements, using $512$ cores. 
    } 
    \label{Tab:NumericalResults:solver_comparison_of_nonest_nested_inclusions_uniform}
  }
\end{table}

\begin{figure}[ht]
  \begin{center}
    \includegraphics[scale=.12]{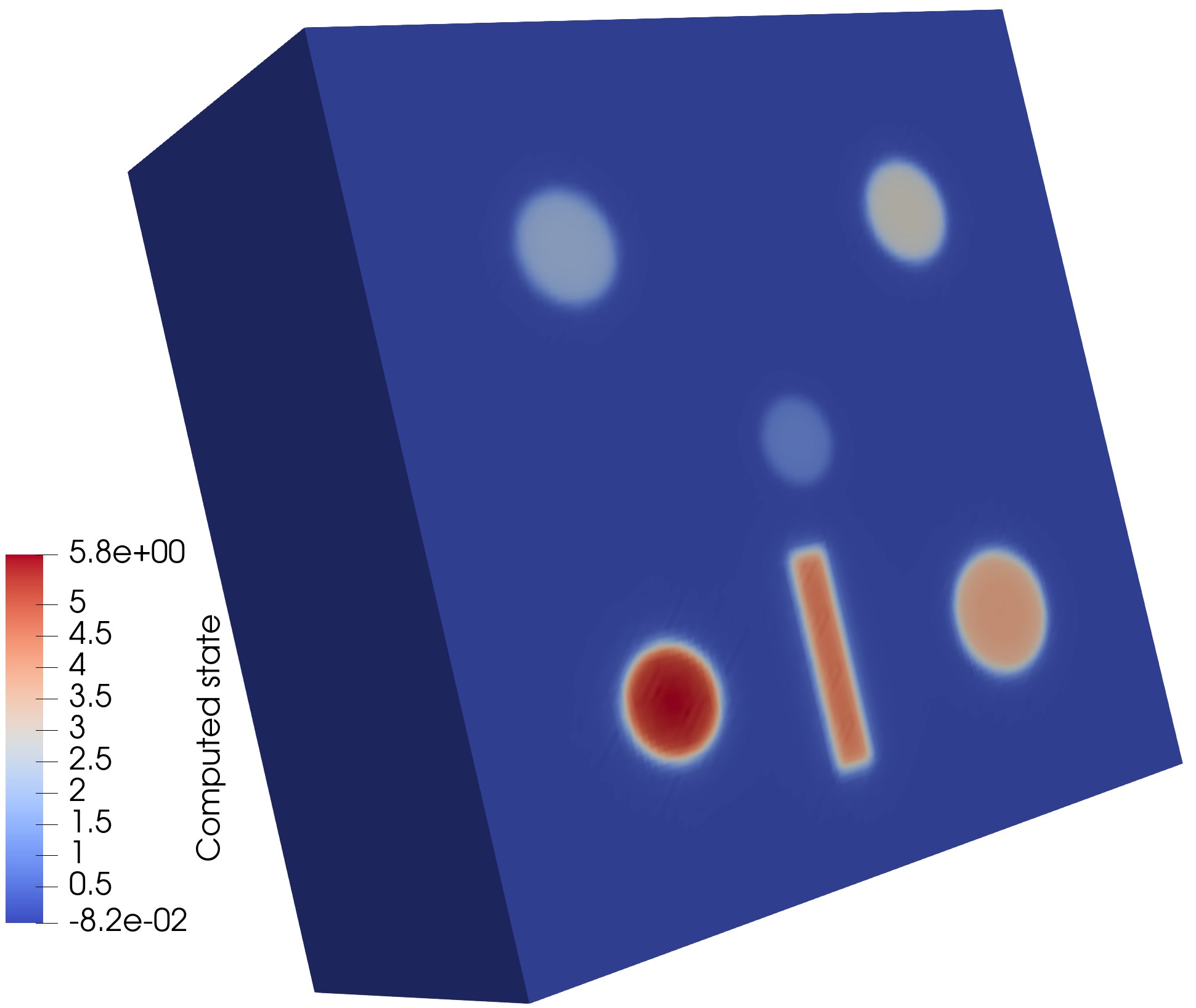}
  \end{center}
  \caption{Computed state solution on the uniform refined mesh
    with $2,146,689$ Dofs at the cut $x_3=1/2$.}
  \label{Fig:NumericalResults:ComputedState}
\end{figure}

\section{An overview on other applications}
\label{Section:OtherApplications}
In this section, we are going to discuss some selected further applications
of the abstract theory as presented in Section \ref{Section:AbstractOCP}.
%
%
\subsection{Dirichlet boundary control of the Laplace equation}
As a first example, we consider the Dirichlet boundary control problem
to minimize
\begin{equation}\label{Eqn:Dirichlet OCP Functional}
  {\mathcal{J}}(y_\varrho,u_\varrho) =
  \frac{1}{2} \, \| y_\varrho - \overline{y} \|^2_{L^2(\Omega)} +
  \frac{1}{2} \, \varrho \, | u_\varrho |^2_{H^{1/2}(\Gamma)}
\end{equation}
subject to the Dirichlet boundary value problem for the Laplace
equation
\begin{equation}\label{Eqn:Dirichlet OCP PDE}
  - \Delta y_\varrho = 0 \quad \mbox{in} \; \Omega, \quad
  y_\varrho = u_\varrho \quad \mbox{on} \; \Gamma := \partial \Omega,
\end{equation}
where the control $u_\varrho$ is now nothing but the Dirichlet data of the
state $y_\varrho$ on the boundary $\Gamma$ of $\Omega$. In this case, we
again have $H_Y = L^2(\Omega)$, but the state space $Y$ now is the
space of all harmonic functions in $H^1(\Omega)$, i.e.,
$Y := \{ y \in H^1(\Omega) :
\langle \nabla y , \nabla v \rangle_{L^2(\Omega)} = 0 \; \forall
v \in H^1_0(\Omega)\}$. The state to control map
$u_\varrho = \gamma_0^{\mbox{\footnotesize int}}y_\varrho$ is then given by
the interior Dirichlet trace operator
$\gamma_0^{\mbox{\footnotesize int}} : H^1(\Omega) \to H^{1/2}(\Gamma)$,
i.e.,
$ B = \gamma_0^{\mbox{\footnotesize int}} : Y \to U = H^{1/2}(\Gamma)$.
Moreover, we introduce
$X = H_*^{-1/2}(\Gamma) := \{ \psi \in H^{-1/2}(\Gamma) :
\langle \psi , 1 \rangle_\Gamma = 0\}$.
A semi-norm in the control space $U = H^{1/2}(\Gamma)$
is induced by the Steklov--Poincar\'e operator
$S : H^{1/2}(\Gamma) \to H^{-1/2}_*(\Gamma)$ which is defined via
\[
  |u_\varrho|_{H^{1/2}(\Gamma)}^2 = \langle S u_\varrho , u_\varrho \rangle_\Gamma =
  \int_\Gamma \frac{\partial}{\partial n_x}y_\varrho(x) \, y_\varrho(x) \, dx =
  \int_\Omega |\nabla y_\varrho(x)|^2 \, dx,
\]
where $y_\varrho \in Y$ is the harmonic extension of $u_\varrho \in U$.
With this we can write the minimization problem
\eqref{Eqn:Dirichlet OCP Functional}-\eqref{Eqn:Dirichlet OCP PDE}
as in \eqref{Eqn:AbstractOCP:AbstractFunctionalD} to minimize
\begin{equation}\label{Eqn:Dirichlet OCP reduced Functional}
  \check{\mathcal{J}}(y_\varrho) =
  \frac{1}{2} \, \| y_\varrho - \overline{y} \|^2_{L^2(\Omega)} +
  \frac{1}{2} \, \varrho \, \| \nabla y_\varrho \|^2_{L^2(\Omega)}
\end{equation}
on the space $Y$ of harmonic functions.
The minimizer $y_\varrho \in Y$ of the reduced cost functional
\eqref{Eqn:Dirichlet OCP Functional} is then charcterized as the unique
solution of the gradient equation in variational form, satisfying
\begin{equation}\label{Eqn:Dirichlet OCP gradient equation}
  \langle y_\varrho , y \rangle_{L^2(\Omega)} + \varrho \,
  \langle \nabla y_\varrho , \nabla y \rangle_{L^2(\Omega)} =
  \langle \overline{y} , y \rangle_{L^2(\Omega)} \quad
  \mbox{for all} \; y \in Y .
\end{equation}
Moreover, in this particular case, we can formulate state and control
constraints at once by defining
$Y_{s/c} := \{ y \in Y : g_- \leq y \leq g_+ \}$, where $g_\pm \in Y$ are
given (constant) barrier functions. For $y_\varrho \in Y_{s/c}$, the
minimizer of \eqref{Eqn:Dirichlet OCP reduced Functional} is then
determined as the unique solution of the variational inequality
\eqref{Abstract VI} satisfying
\begin{equation}\label{Eqn:Dirichlet OCP VI}
 \langle y_\varrho , y-y_\varrho \rangle_{L^2(\Omega)} + \varrho \,
  \langle \nabla y_\varrho , \nabla (y-y_\varrho) \rangle_{L^2(\Omega)} \geq
  \langle \overline{y} , y - y_\varrho \rangle_{L^2(\Omega)} \quad
  \mbox{for all} \; y \in Y_{s/c} .
\end{equation}
It is obvious that all regularization error estimates as given
in the abstract setting remain true.
In order to incorporate the constraints in the definition of the state
space $Y$, instead of \eqref{Eqn:Dirichlet OCP gradient equation} we
can introduce a Lagrange multiplier $p_\varrho \in H^1_0(\Omega)$ and
solve a saddle point variational formulation for
$(y_\varrho , p_\varrho) \in H^1(\Omega) \times H^1_0(\Omega)$ satisfying
\begin{equation}\label{Eqn:Dirichlet OCP Mixed}
  \begin{array}{lcl}
  \langle y_\varrho , y \rangle_{L^2(\Omega)} + \varrho \,
  \langle \nabla y_\varrho , \nabla y \rangle_{L^2(\Omega)}
  + \langle \nabla p_\varrho , \nabla y \rangle_{L^2(\Omega)}
  & = & \langle \overline{y} , y \rangle_{L^2(\Omega)}, \\[1mm]
    \langle \nabla y_\varrho , \nabla q \rangle_{L^2(\Omega)}
    & = & 0
  \end{array}
\end{equation}
for all $(y,q) \in H^1(\Omega) \times H^1_0(\Omega)$. Finite element
error estimates for the numerical solution of \eqref{Eqn:Dirichlet OCP Mixed}
follow when using standard arguments.

\begin{remark}
  The Dirichlet boundary control problem
  \eqref{Eqn:Dirichlet OCP Functional}-\eqref{Eqn:Dirichlet OCP PDE}
  in the control space $U=H^{1/2}(\Gamma)$ was first considered in
  \cite{OfPhanSteinbach:2015}, see also
  \cite{ChowdhuryGudiNandakumaran:2017,GongLiuTanYan:2019,Winkler:2020};
  for the consideration of control or state constraints, see
  \cite{GanglLoescherSteinbach2025CAMWA,GongTan2025JSC,GudiSau:2020}.
  Further extensions include Dirichlet control for Stokes flow
  \cite{GongMateosSinglerZhang:2022}, or for parabolic evolution
  equations \cite{GudiMallikSau:2022}.
\end{remark}

%
%
\subsection{Distributed control of parabolic evolution equations}
The abstract theory as given in Section \ref{Section:AbstractOCP}
is not restricted to elliptic state equations, 
but can also be applied to time-dependent PDEs.
As an example for an parabolic evolution equation 
we consider the minimization of
\[
  {\mathcal{J}}(y_\varrho,u_\varrho) =
  \frac{1}{2} \, \| y_\varrho - \overline{y} \|_{L^2(Q)}^2 +
  \frac{1}{2} \, \| u_\varrho \|^2_{L^2(0,T;H^{-1}(\Omega))}
\]
subject to the 
initial-boundary value
problem for the heat equation
\[
  \partial_t y_\varrho - \Delta_x y_\varrho = u_\varrho \quad
  \mbox{in} \, Q, \quad y_\varrho = 0 \; \mbox{on} \; \Sigma, \quad
  y_\varrho = 0 \; \mbox{on} \; \Sigma_0,
\]
where, for a given time horizon $T>0$,
$Q:=\Omega \times (0,T)$ is the space-time cylinder  
with the lateral boundary $\Sigma = \partial \Omega \times (0,T)$ 
and the bottom $\Sigma_0 = \Omega \times \{0\}$. 
In this case, we have
$H_X = H_Y = L^2(Q)$, as well as
$X = L^2(0,T;H^1_0(\Omega))$ and $U = X^* = L^2(0,T;H^{-1}(\Omega))$.
Moreover, the related state space is defined as
$Y = \{ y \in X : \partial_t y \in X^*, y(0)=0 \}$. Within this setting
we have $B = \partial_t - \Delta_x : Y \to X^*$, and
$A = - \Delta_x : X \to X^*$. The reduced optimality system then reads
to find $(p_\varrho,y_\varrho) \in X \times Y$ such that
\begin{eqnarray*}
  \frac{1}{\varrho} \, \langle \nabla_x p_\varrho ,
  \nabla_x q \rangle_{L^2(\Omega)} + \langle \partial_t y_\varrho , q \rangle_Q
  + \langle \nabla_x y_\varrho , \nabla_x y \rangle_{L^2(\Omega)}
  & = & 0, \\
  - \langle p_\varrho , \partial_t y \rangle_Q -
  \langle \nabla_x p_\varrho , \nabla_x y \rangle_{L^2(\Omega)} +
  \langle y_\varrho , y \rangle_{L^2(\Omega)}
  & = & \langle \overline{y} , y \rangle_{L^2(\Omega)}
\end{eqnarray*}
is satisfied for all $(q,y) \in X \times Y$. This variational formulation
and its space-time finite element discretization was analysed in
\cite{LangerSteinbachYang:2024ACOM}, for other approaches, see, e.g.,
\cite{BeranekReinholdUrban:2023,
  GongHinzeZhou:2012,
  LSTY:GunzburgerKunoth:2011a,
  LangerSteinbachTroeltzschYang:2021SINUM,
  LangerSteinbachTroeltzschYang:2021SISC,
  DMBV2008a,
  DanielsHinzeVierling:2015}.
While all of these approaches require the solution of a coupled
forward-backward system, a new approach related to the abstract
variational formulation \eqref{Eqn:AbstractOCP:Abstract VF} was
recently considered in \cite{LoescherReicheltSteinbach:2024},
where $D$ is induced by the norm of the anisotropic Sobolev space
$H^{1,1/2}_{0;0,}(Q) := L^2(0,T;H^1_0(\Omega)) \cap
H^{1/2}_{0,}(0,T;L^2(\Omega))$.
%
%
\subsection{Distributed control of hyperbolic evolution equations}
As an example for a hyperbolic evolution equation as state equation we
consider the minimization of
\[
  {\mathcal{J}}(y_\varrho,u_\varrho) =
  \frac{1}{2} \, \| y_\varrho - \overline{y} \|_{L^2(Q)}^2 +
  \frac{1}{2} \, \| u_\varrho \|^2_{L^2(0,T;H^{-1}(\Omega))}
\]
subject to the 
initial-boundary value problem  for the wave equation
\begin{equation}\label{Eqn:DBVP Wave}
  \partial_{tt} y_\varrho - \Delta_x y_\varrho = u_\varrho \quad
  \mbox{in} \, Q, \quad y_\varrho = 0 \; \mbox{on} \; \Sigma, \quad
  y_\varrho = \partial_t y_\varrho = 0\; \mbox{on} \; \Sigma_0.
\end{equation}
When assuming $u_\varrho \in L^2(Q)$, the initial boundary value problem
\eqref{Eqn:DBVP Wave} admits a unique solution
$y_\varrho \in H^{1,1}_{0;0,}(Q) := L^2(0,T;H^1_0(\Omega)) \cap
H^1_{0,}(0,T;L^2(\Omega))$; see, e.g.,~\cite{Ladyzhenskaya:1985,SteinbachZank:2020}.
Hoeever, this does not define an isomorphism. 
When using
$X = H^{1,1}_{0;,0}(Q) := L^2(0,T;H^1_0(\Omega)) \cap
H^1_{,0}(0,T;L^2(\Omega))$, i.e., $U = [H^{1,1}_{0;,0}(Q)]^*$, and in
order to ensure that the wave operator
$ B := \Box := \partial_{tt} - \Delta_x : Y \to X^*$ is
an isomorphism, we need to define the state space accordingly.
Therefore, and following \cite{SteinbachZank:2022}, we introduce
\[
  Y := {\mathcal{H}}_{0;0,}(Q) :=
  \overline{H^{1,1}_{0;0,}(Q)}^{\| \cdot \|_{{\mathcal{H}}(Q)}}, \quad
  \| u \|_{{\mathcal{H}}(Q)} := \sqrt{\| u \|^2_{L^2(Q)} +
  \| \Box \widetilde{u} \|^2_{[H^1_0(Q_-)]^*}},
\]
where $\widetilde{u}$ is the zero extension of $u \in L^2(Q)$ to
$Q_- := \Omega \times (-T,T)$. In this setting we can define
$A : X \to X^*$, satisfying
\[
  \langle A p,q \rangle_Q :=
  \langle \partial_t p , \partial_t q \rangle_{L^2(Q)} +
  \langle \nabla_x p , \nabla_x q \rangle_{L^2(Q)} \quad
  \mbox{for all} \; p,q \in X .
\]
Then we can write the abstract gradient equation
\eqref{Eqn:AbstractOCP:Abstract gradient equation}
as variational problem to find
$(y_\varrho,p_\varrho) \in {\mathcal{H}}_{0;0,}(Q) \times
H^{1,1}_{0;,0}(Q)$ such that
\begin{equation}\label{Eqn:OCP Wave}
  \varrho^{-1} \, \langle A p_\varrho , q \rangle_Q +
  \langle \Box \widetilde{y}_\varrho , {\mathcal{E}} q \rangle_{Q_-} = 0,
  \quad
  - \langle \Box \widetilde{y} , {\mathcal{E}} p_\varrho \rangle_{Q_-} +
  \langle y_\varrho , y \rangle_{L^2(Q)} =
  \langle \overline{y} , y \rangle_{L^2(Q)}
\end{equation}
is satisfied for all 
$(y_,q) \in {\mathcal{H}}_{0;0,}(Q) \times H^{1,1}_{0;,0}(Q)$,
where ${\mathcal{E}} : H^{1,1}_{0;,0}(Q) \to H^1_0(Q_-)$ is a suitable
extension operator, e.g., reflection in time with respect to $t=0$.
In any case, for a space-time finite element Galerkin 
discretization
of \eqref{Eqn:OCP Wave}, 
we introduce the standard finite
element spaces $Y_h = S_h^1(Q) \cap H^{1,1}_{0;0,}(Q)$
and $X_h = S_h^1(\Omega) \cap H^{1,1}_{0;,0}(Q)$ of piecewise linear
continuous basis functions. For $(y_h,q_h) \in Y_h \times X_h$ and
following \cite[Lemma 3.5]{SteinbachZank:2022} we have
\[
  \langle \Box y_h , {\mathcal{E}} q_h \rangle_{Q_-}
  = - \langle \partial_t y_h , \partial_t q_h \rangle_{L^2(Q)} +
  \langle \nabla_x y_h , \nabla_x q_h \rangle_{L^2(Q)} .
\]
Hence we have to find
$(y_{\varrho h}, p_{\varrho h}) \in Y_h \times X_h$ such that
\begin{eqnarray*}
  \langle \partial_t p_{\varrho h} , \partial_t q_h \rangle_{L^2(Q)} +
  \langle \nabla_x p_{\varrho h} , \nabla_x q_h \rangle_{L^2(Q)}
  \hspace*{2.5cm}
  && \\[1mm]
     - \langle \partial_t y_{\varrho h} , \partial_t q_h \rangle_{L^2(Q)} +     
  \langle \nabla_x y_{\varrho h} , \nabla_x q_h \rangle_{L^2(Q)}
  & = & 0, \\[2mm]
  \langle \partial_t y_h , \partial_t p_{\varrho h} \rangle_{L^2(Q)} -     
  \langle \nabla_x y_h , \nabla_x p_{\varrho h} \rangle_{L^2(Q)} +
  \langle y_{\varrho h} , y_h \rangle_{L^2(Q)}
  & = & \langle \overline{y} , y_h \rangle_{L^2(Q)}
\end{eqnarray*}
is satisfied for all $(y_h,q_h) \in Y_h \times X_h$. For a more detailed
analysis of this approach, see \cite{LoescherSteinbach:2022SINUM};
for other approaches we refer to, e.g., 
\cite{GugatKeimerLeugering:2009,KroenerKunischVexler:2011,
  MontanerMuench:2019,PeraltaKunisch:2022,Zuazua:2005}.
  
%
%
%
\section{Conclusions and outlook}\label{Section:ConclusionOutlook}
We have first considered abstract tracking-type OCPs subject to some state 
equation $By=u$ where we think of linear elliptic, parabolic, and
hyperbolic PDEs or PDE systems. If the state operator $B$ is an isomorphism
between the state space $Y$ and the control space $U$, then the OCP can be
reduced to a state-based optimality condition that is nothing but a state-based
operator equation, or to a state-based variational inequality in the case of
additional abstract constraints imposed on the state or the
control. We have started with the investigation of the case without any box
constraints. For this case, we have provided estimates of the error 
$\|y_\varrho - \overline{y} \|_{H_Y}$ between the optimal state $y_\varrho$
and the desired state $\overline{y}$ in the tracking norm $\| \cdot \|_{H_Y}$
in terms of the regularization parameter $\varrho$ and the regularity of
the desired state $\overline{y}$. After the Galerkin discretization of the
state-based operator equation, we have analysed the error 
$\|{y}_{\varrho h} - \overline{y}\|_{H_Y}$ between the Galerkin solution 
${y}_{\varrho h}$ and the desired state $\overline{y}$. We have observed that
the finite element mesh size $h$ is strongly related to the
regularization parameter $\varrho$ in order to obtain the asymptotically
optimal convergence rate. The optimal control $\widetilde{u}_{\varrho h}$ 
can be computed from the Galerkin state solution ${y}_{\varrho h}$ in a
postprocessing procedure. Moreover, we have presented efficient (parallel)
iterative solvers that can be used in a smart nested iteration process 
where we can control the accuracy of the computed state and the energy cost
of the control. Furthermore, we have shown that one can easily add and
analyse constraints for the state or the control.

In the second part of the paper, we have applied the abstract theoretical
framework to the distributed control of Poisson's equation as blueprint
for other applications. We have presented numerical results for 1d, 2d, and
3d benchmarks with different features concerning the regularity of the target. 
These numerical results not only illustrate our theoretical error estimates
quantitatively, but also demonstrate that the iterative solvers are very
efficient, in particular, in a nested iteration setting on parallel computers. 

Finally, we have briefly discussed some other applications of the abstract 
framework to PDE constrained OCP like the  Dirichlet boundary control of
the Laplace equation and OCPs subject to parabolic or hyperbolic state
equations where space-time finite elements are used for their discretization.

Further extensions include problem classes for other boundary control problems
such as Neumann or Robin type problems; OCPs with either partial observations
or partial controls, or the optimal control of non-linear state equations
such as the Navier--Stokes system.

%
%
%
\section*{Acknowledgments}
We would like to thank 
the Johann Radon Institute for Computational and Applied Mathematics (RICAM) 
for providing computing resources on the high-performance computing cluster
Radon1\footnote{https://www.oeaw.ac.at/ricam/hpc}.
Further, the financial support for the fourth author by the
Austrian Federal Ministry for Digital and Economic Affairs, the National
Foundation for Research, Technology and Development and the Christian
Doppler Research Association is gratefully acknowledged.

	


\bibliography{LLSY2024MCRF}
\bibliographystyle{abbrv} 
  

\end{document}